\newtheorem{theorem}{Theorem}[section]
\newtheorem{lemma}[theorem]{Lemma}
\newtheorem{proposition}[theorem]{Proposition}
\theoremstyle{definition}}
\theoremstyle{definition}\newtheorem{definition}[theorem]{Definition}}
\theoremstyle{definition}}
\numberwithin{equation}{section}
\begin{document}

\def\C{{\mathbb C}}
\def\N{{\mathbb N}}
\def\Z{{\mathbb Z}}
\def\R{{\mathbb R}}
\def\Q{{\mathbb Q}}
\def\K{{\mathbb K}}
\def\Rc{{\mathcal R}}
\def\Qc{{\mathcal Q}}
\def\Kc{{\mathcal K}}
\def\Bc{{\mathcal B}}
\def\F{{\mathcal F}}
\def\H{{\mathcal H}}
\def\E{{\mathcal E}}
\def\B{{\mathcal B}}
\def\rr{{\mathcal R}}
\def\pp{{\mathcal P}}
\def\cF{{\mathcal F}}
\def\cB{{\mathcal B}}
\def\cJ{{\mathcal J}}
\def\cF{{\mathcal F}}
\def\cR{{\mathcal R}}
\def\chH{{\mathcal H}{\mathcal C}{\mathcal H}}
\def\hH{{\mathcal H}{\mathcal H}}

\def\dd{ \{\!\{ }
\def\d{\delta}
\def\c{ \}\!\} }
\def\Hom{\rm Hom}
\def\epsilon{\varepsilon}
\def\kappa{\varkappa}
\def\phi{\varphi}
\def\leq{\leqslant}
\def\geq{\geqslant}
\def\slim{\mathop{\hbox{$\overline{\hbox{\rm lim}}$}}\limits}
\def\ilim{\mathop{\hbox{$\underline{\hbox{\rm lim}}$}}\limits}
\def\supp{\hbox{\tt supp}\,}
\def\dim{{\rm dim}\,}
\def\Ann{{\rm Ann}\,}
\def\Hom{{\rm Hom}\,}
\def\ssub#1#2{#1_{{}_{{\scriptstyle #2}}}}
\def\dimk{{\ssub{\dim}{\K}\,}}
\def\ker{\hbox{\tt ker}\,}
\def\im{\hbox{\tt im}\,}
\def\spann{\hbox{\tt span}\,}
\def\supp{\hbox{\tt supp}\,}
\def\deg{\hbox{\tt \rm deg}\,}
\def\bin#1#2{\left({{#1}\atop {#2}}\right)}
\def\summ{\sum\limits}
\def\maxx{\max\limits}
\def\minn{\min\limits}
\def\limm{\lim\limits}
\def\ootimes{\,{\text{$\scriptstyle\otimes$}}\,}
\def\oo{\otimes}
\def\w{\omega}

\def\K{\mathbb K}
\def\k{\mathbb K}
\def\lll{\langle}
\def\rrr{\rangle}
\def\a{\alpha}
\def\b{\beta}
\def\g{\gamma}
\def\ai{$A_{\infty}$}
\def\l{\langle}
\def\r{\rangle}
\def\lor{k\langle\langle x,y \rangle\rangle}
\def\xd#1{\frac{\partial}{\partial x_{#1}}}
\def\p{\delta}

\def\C{{\mathbb C}}
\def\N{{\mathbb N}}
\def\Z{{\mathbb Z}}
\def\R{{\mathbb R}}
\def\PP{\cal P}
\def\phi{\varphi}
\def\ee{\epsilon}
\def\ll{\lambda}
\def\a{\alpha}
\def\bb{\beta}
\def\D{\Delta}
\def\g{\gamma}
\def\rk{\text{\rm rk}\,}
\def\dim{\text{\rm dim}\,}
\def\ker{\text{\rm ker}\,}
\def\square{\vrule height6pt width6pt depth 0pt}
\def\epsilon{\varepsilon}
\def\phi{\varphi}
\def\kappa{\varkappa}
\def\strl#1{\mathop{\hbox{$\,\leftarrow\,$}}\limits^{#1}}

\vskip1cm

\title{Pre-Calabi-Yau algebras and noncommutative calculus on higher cyclic Hochschild cohomology}

\author{Natalia Iyudu, Maxim Kontsevich}


\date{}

\maketitle

\begin{abstract}

We prove $L_{\infty}$-formality for the higher cyclic Hochschild complex $\chH$ over  free associative algebra or path algebra of a quiver. The $\chH$ complex is introduced as an appropriate tool for the definition of  pre-Calabi-Yau structure. We show that cohomologies of this complex are pure in case of free algebras (path algebras), concentrated in degree zero. It serves as a  main ingredient for the formality proof. For any smooth algebra we choose a small qiso subcomplex in the higher cyclic Hochschild complex, which gives rise to a calculus of highly noncommutative monomials,  we call them $\xi\p$-monomials. The Lie structure on this subcomplex is combinatorially described in terms of $\xi\p$-monomials. This subcomplex and a basis of $\xi\p$-monomials in combination with arguments from Gr\"obner bases theory serves for the cohomology calculations of the higher cyclic Hochschild complex.
The language of  $\xi\delta$-monomials
in particular allows an  interpretation of pre-Calabi-Yau structure
as a noncommutative Poisson structure.


\end{abstract}

{\small \noindent{\bf MSC:} \ \  16A22,
 16S37, 16Y99, 16G99, 16W10, 17B63}

\noindent{\bf Keywords:} \ \ A-infinity structure, pre-Calabi-Yau structure, inner product, cyclic invariance, graded pre-Lie algebra,  Maurer-Cartan equation, Poisson structure, double Poisson bracket, Hochschild (co)homology, L-infinity structure, formality.

\section{Introduction}\label{intro}
The calculations we perform on the higher cyclic Hoshschild complex which lead to the formality proof as well as the construction of this complex itself are inspired by the
notion of pre-Calabi-Yau algebra, which was introduced by
Kontsevich, Vlassopoulos  \cite{KV} (see also the talk \cite{Ktalk}),  Seidel \cite{Seidel}, and Tradler, Zeinalian \cite{TZ}.
It turned out that this structure is present in many different areas, including topology of compact manifolds with boundary, algebraic geometry, symplectic geometry. For example,  Fano varieties are endowed with a pre-Calabi-Yau structure, open Calabi-Yau manifolds have this structure, from the HMS conjecture it is expected that the Fukaya wrapped category of an open symplectic manifold endowed with a pre-Calabi-Yau structure.
Pre-Calabi-Yau structure produces a generalization (for arbitrary genus) of the Tamarkin-Tsygan calculus.
Big class of examples comes from the notion of algebra (category) of finite type, introduced by To\"en and Vaqui\'e \cite{TV}. For these algebras $d$-pre-Calabi-Yau structure on $A$ produces a $(2-d)$-shifted derived Poisson structure on the moduli stack of finite-dimensional $A$-modules.

The philosophy presented in the  Kontsevich and Soibelman paper \cite{KS} says that it is natural to construct a formal noncommutative geometry where the role of manifold played by $A_{\infty}$-algebra, and the role of Poisson structure,  by pre-Calabi-Yau algebra. There were  numerous attempts in the past to introduce a notion of noncommutative Poisson structure. First natural thought in this direction is to try and define it exactly like in commutative case, as a Lie bracket $\{.,.\}:A \times A \to A$ which satisfies the Leibnitz rule $\{a,bc\} =\{a,b\}c+b\{a,c\}$. However
this definition would give a very restricted class of objects. As it was shown by Farkas and Letzter \cite{FL} the only such  Poisson bracket on noncommutative prime ring  is the bracket $[a,b]=ab-ba.$ So this definition turned out to be  not what one would hope for. Another notion of noncommutative Poisson bracket was suggested by Xu \cite{X} and by Block and Getzler \cite{BG}. It had the property that if $A$ endowed with  this noncommutative Poisson structure, then on the centre of $A$, ${\cal Z}(A)$ there is an induced commutative Poisson structure, but the definition did not ensure that there will be an induced Poisson structure on  representation spaces of $A$ or on their moduli. It was desirable to find a notion of noncommutative Poisson structure, which would behave well on the testing ground of representation spaces. This was  achieved in the definition of double Poisson bracket by Van den Bergh \cite{VdB}. The bracket was 'thickened', that is it was defined as a map $\dd.,.\c:A \otimes A \to A \otimes A$ , satisfying  axioms which are certain generalisation of the Leibnitz and the Jacobi identities on $A \oo A$.  In the work of Crawley-Boevey \cite{CB} the definition of noncommutative ($H_0$) Poisson structure was given as
a Lie bracket on zero Hochschild homology $H_0(A)=A/[A,A]$, which can be lifted to a derivation on $A$.
We put these latter developments into the context of pre-Calabi-Yau structures (or rather associated cohomology) and embed noncommutaive bivector fields into a calculus of 'highly noncommutative' words. This gives a new perspective, kind of panoramic aerial view on what is going on, how noncommutative notions involving 'thickening' or factorizations arise.
The attempts to reach a good behaviour on representation spaces was based on the philosophy introduced by Kontsevich and Rosenberg \cite{Kf,KR}, we follow the ideas of this paper throughout the text and pursue some  aspects of those, as well as of their reincarnations, for example, in Kontsevich and Soibelman  \cite{KS}.

We give several equivalent definitions of pre-CY structures, one of them in terms of higher cyclic Hochshild complex. Amongst the advantages of this definition is that it works not only for finite dimensional algebras, or algebras with finite dimensional graded components. We show in section~\ref{dP} the reason why this definition contains as a particular case  the double Poisson bracket.
Roughly speaking, a pre-Calabi-Yau structure is a solution of the the Maurer-Cartan equation with respect to generalized necklace bracket in the higher cyclic Hochschild complex (for precise definition see section~\ref{hH}).

We study the higher cyclic Hochschild complex ($\chH$), its homologies and Lie structure.
 One of the tools we use for the combinatorial description of Lie structure, and for the proof of purity of this complex is  a calculus of noncommutative cyclic words with labels, which we introduce.
 We start with the
 free associative algebra  $A=\k \l X \r$ with finite number of generators $X=\{x_1,...,x_r\}$ and 'labels' $\d_1,...,\d_r$, $\xi$.
 Monomials from free algebra $\k \l X \r$ are written cyclically on the circle and separated by labels.
 These generalised cyclic monomials with labels, which we call $\xi\d$-monomials, represent operations on tuples of monomials from $A$ and form a convenient basis in the small subcomplex of the  higher cyclic Hochschild complex. The $\xi\d$-monomials can be considered  as highly noncommutative words, which can be multiplied not only from the right or from the left, but from $r$ sides, where $r$ is a number of $\p$th in the monomial. The result on formality of the higher cyclic Hochschild complex we got provides $L_{\infty}$ quasi-isomorphism between higher cyclic Hochschild complex and its cohomology. Thus the consideration of homology of the complex instead of the complex itself for the notion of noncommutative Poisson structures
 and of noncommutative polyvector fields becomes justified.

Whenever we are working with the higher cyclic Hochschild complex itself, without embracing in further word combinatorics, we can speak of an arbitrary  smooth algebra $A$,
 as defined in \cite{KR}, i.e. a finitely generated algebra $A$, with  kernel of multiplication being a projective $A$-bimodule.



To deal with the higher cyclic Hochschild complex $\chH(A)=C^{(\bullet)}(A)$
 we choose a small subcomplex $\xi^{(\bullet)}$, quasi-isomorphic to
$C^{(\bullet)}(A).$
 We specify   a particular embedding of the subcomplex $\zeta^{(\bullet)}$ into $C^{(\bullet)}(A)$ (section~\ref{sH}) by choosing a basis  of $\xi\d$-monomials in $\xi^{(\bullet)}$ and
 describing an element of $C^{(\bullet)}$ (which we call an  'operation')  corresponding to a
$\xi\d$-monomial.
 The element of $\prod\limits_{r_1,...,r_n\geq 0} \Hom (\mathop{\oo}\limits_{i=1}^n A^{\oo r_i}, A^{\oo n})$, corresponding to $\xi\d$-monomial  with $m=\sum r_i$ occurrences of labels
$\delta_i, i=\bar{1,r}$, and $n-m$ occurrences of label $\xi$
 is schematically shown in the following picture.

\vskip1cm

\phantom0\hskip4cm
\includegraphics[scale=0.06]{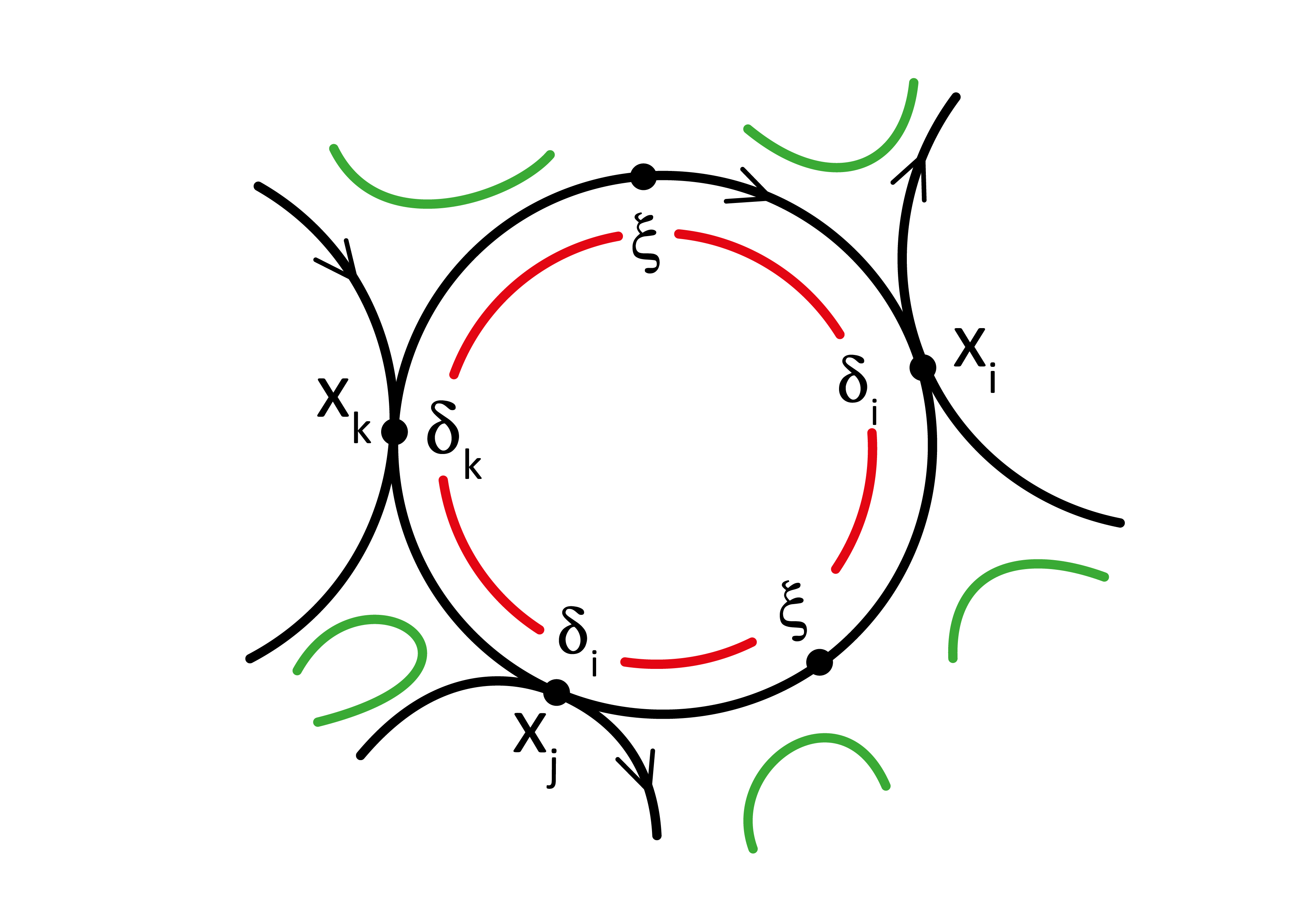}
\hfill

\phantom0

\vskip1cm

The black arches are input monomials from $A$ and green arches are output monomials, consisting of parts of inputs and parts of $\xi\d$-monomial defining the operation. We suppose orientation is clockwise everywhere. One can read the output monomials following this orientation.
In this particular picture we see the $\xi\d$-monomial
which encodes an operation $\Phi: A^{\oo 3}\to  A^{\oo 5}$ (more precisely, $\Phi: A\oo A^0 \oo A \oo A^0 \oo A \to A^{\oo 5})$.

The particular case of  operation encoded by $\xi\d$-monomial with two $\delta$th and no $\xi$th correspond to the Poisson double bracket. Indeed,  the operation $A\otimes A \to A \oo A$,  obtained from such a $\xi\d$-monomial automatically satisfies the double Leibnitz rule.  The double Jacobi identity comes from the Maurer-Cartan equation on the elements of small subcomplex $\zeta^{(\bullet)}$ of the higher Hochschild complex.
This will be explained more precisely in  section \ref{dP}. We show there how the double Poisson bracket invented by Van den Bergh \cite{VdB} as a structure which induce a  Poisson bracket on representation space of algebra,  appear as a particular  pre-Calabi-Yau structure. In \cite{IK} we gave a detailed proof of the following fact.
Any pre-Calabi-Yau structure with $m_4=0$ on arbitrary associative algebra gives rise to a double Poisson bracket according to the formula \cite{IK}:
$$
(*)\quad\quad\quad  \langle g\otimes f,\dd b,a\c\rangle:=\langle m_3(a,f,b), g \rangle,
$$
Moreover, an arbitrary double Poisson bracket can be obtained  from pre-Calabi-Yau structure of special type, with only second and third multiplications $m_2$ and $m_3$ present.
We comment here on the main idea behind this earlier direct proof from the point of view of the definition of pre-Calabi-Yau structure via higher cyclic Hochschild complex.

Coming back to the general situation, we describe the generalised necklace bracket which endows the higher cyclic Hochschild complex with a graded  Lie algebra structure.  In section~\ref{Lie} we show how this bracket works  in terms of $\xi\d$-monomials. By this we not only prove that the small subcomplex  $\zeta^{(\bullet)}_{A}$
is a Lie subalgebra in ${\bf g}=(C^{(\bullet)}_{A}(A), [,]_{g.n})$, but also give a concrete combinatorial formula for this bracket  on $\xi\d$-monomials.
We prove that the bracket $[A,B]$ of two $\xi\d$-monomials $A, B \in \zeta^{(\bullet)}_{A}$ is
$[A,B]=A \circ B - B \circ A$, where $A \circ B$ is a linear combination of  $\xi\d$-monomials obtained from $\xi\d$-monomials  $A$ and $B$ by all possible gluings  of $\delta_j$ of $A$ and $x_j$ of $B$, as shown in the following picture.

\vskip1cm

\phantom0\hskip4cm
\includegraphics[scale=0.2]{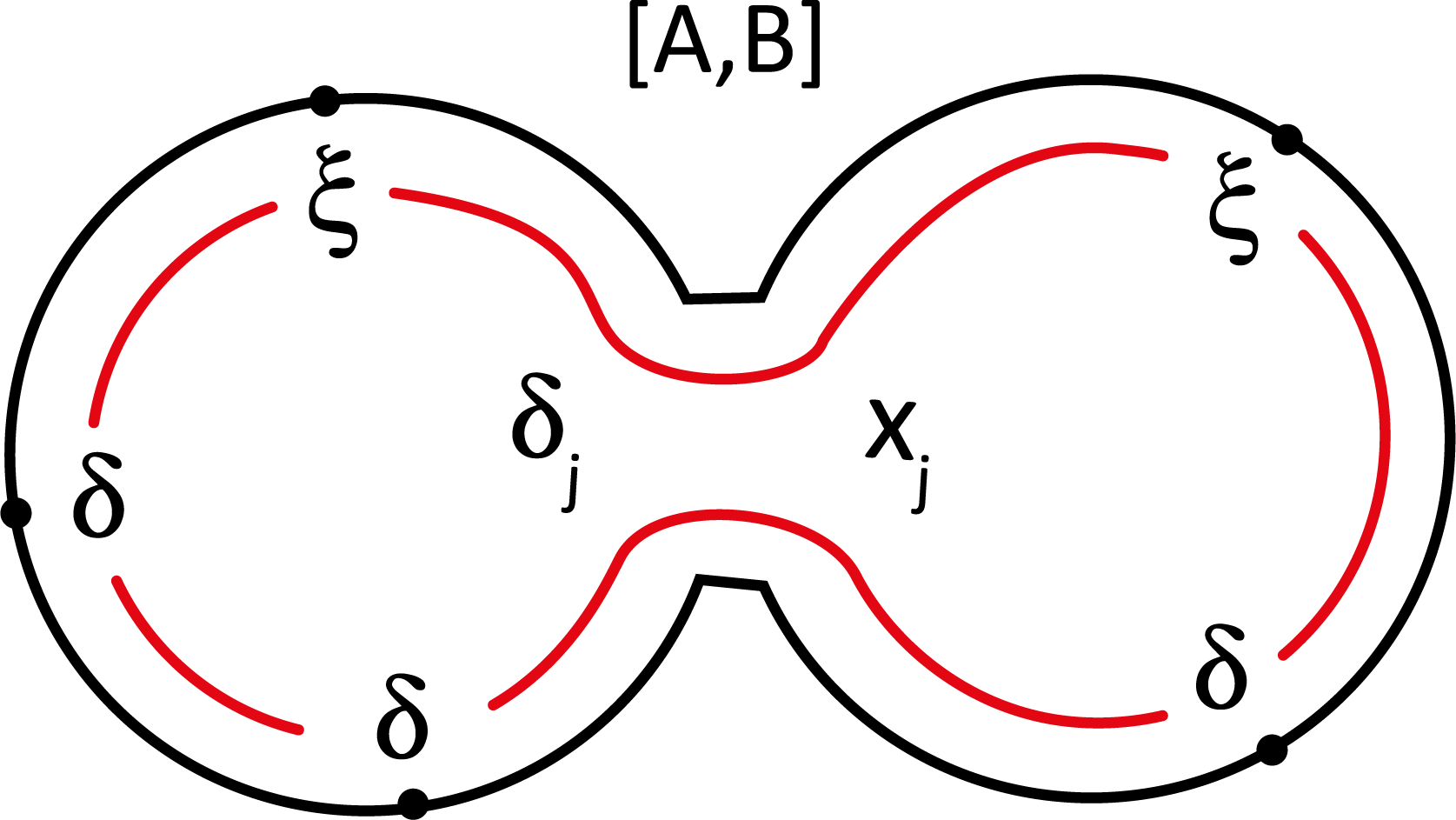}
\hfill

\phantom0

\vskip1cm

Namely, we glue all  $\d_j$ from $\xi\d$-monomial $A$ to a corresponding $x_j$ from $B$, then cut at the place of gluing, and open up to obtain one new $\xi\d$-monomial ($x_j$ and $\d_j$ disappear).

The choice of the basis of $\xi\delta$-monomials in the subcomplex $\xi^{(\bullet)}$ of the  higher cyclic Hochschild complex allows, among other things,  an easy interpretation of pre-Calabi-Yau structure as a noncommutative Poisson structure.
Namely, the $\xi\d$-monomial produce an obvious formal analogue of polyvector field, which in turn create a Poisson structure on the representation space of $A$, via kind of Schouten bracket.

Let us note also the following consequence of our result. In view of quite obvious connection between antisymmetric solutions of Yang-Baxter equation and double Poisson brackets, which was noticed first in \cite{TS}, this our description of all double Poisson brackets
in terms of $\xi\delta$-words,
 provides at the same time a description  of all antisymmetric solutions of the Yang-Baxter equation on the vector space $X$.

In section~\ref{hp}
we concentrate on homological properties of the higher cyclic Hochschild complex and prove its homological purity.
 We again use the small quasi-isomorphic subcomplex $\zeta^{(\bullet)}_{A}$
introduced  in section~\ref{sH}.
From the expression of the differential in the whole dualised bar complex $C^{(\bullet)}_{A}(A)$, which we spell out in section~\ref{sH},  we get a differential in $\zeta=\zeta^{(\bullet)}_{\k}$.

 Note that while the elements of higher cyclic  Hochschild complex are defined as elements of
   ${\rm Hom}(\mathop{\otimes}\limits_{i=1}^N A^{\oo r_i}, A^{\oo N})^{\Z_N}$,
  invariant under $\Z_N$-acton, our homology calculations are reduced to a related non-$\Z_N$-invariant complex  $\tilde\zeta$, corresponding to operations with fixed point. This is possible since the differential commutes with the cyclisation procedure (see Lemma~\ref{unbar}).

 The complex with the fixed point  $\tilde\zeta=\oplus \tilde\zeta^{n}_N$, where
$ \tilde\zeta^{n}=\{$ monomials  $u\in \k\l\xi, x_i, \p_i \r$, starting from $\xi$ or $\p_i$, such that
$\deg_{\p} u=n, \deg_{\p, \xi} u=N\}_{\k}$
has natural   bigrading
 by $\p$-degree, and  by degree with respect to  $\xi$ and  $\p_i$th, $i=\bar{1,r}$, the latter we  call {\it weight}.
 Essential for our considerations is the {\it cohomological grading} by $\xi$-degree: $\zeta=\oplus\zeta(l),$ where $\zeta(l)=\mathop{\oplus}\limits_{N-n=l} \zeta^n_N$.

\begin{theorem}\label{M} Let $A$ be a free algebra with at least two generators,
$A=\K\l x_1,..., x_r \r, r\geq 2$, or a path algebra of a quiver with at least two vertices, $A=P\Qc, |\Qc_0| \geq 2$.
Then the homology of the complex $\tilde\zeta(A)=\oplus \tilde\zeta^n_N$ is sitting in the diagonal $n=N$.
 Consequently, the complex $\tilde\zeta=\oplus \tilde\zeta(l)$, $\, \tilde\zeta(l)=\mathop{\oplus}\limits_{N-n=l} \tilde\zeta_N^n$ is pure, that is its homology is sitting only in the last place of the complex $\tilde\zeta$ with respect to cohomological grading by $\xi$-degree. Homological purity hence holds for the higher cyclic Hochschild complex $C^{(\bullet)}$.
\end{theorem}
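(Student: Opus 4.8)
\emph{Approach.} The free algebra $A=\K\langle x_1,\dots,x_r\rangle$, like a path algebra of a quiver, is smooth of Hochschild dimension one, and the theorem is the incarnation of this fact inside the higher cyclic Hochschild complex: all homology is forced into a single extreme cohomological degree. The plan is to establish this directly on the small subcomplex $\tilde\zeta$ by a Gr\"obner-basis / normal-form computation, using the explicit formula for the differential on $\xi\p$-monomials recorded in section~\ref{sH}, which combines a Koszul-type contraction (reflecting smoothness of $A$) with the necklace terms.

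\emph{Reduction to a finite computation.} First I would replace $C^{(\bullet)}(A)$ by its quasi-isomorphic small subcomplex $\zeta^{(\bullet)}_A$, and pass from there to the non-$\Z_N$-invariant fixed-point complex $\tilde\zeta$: by Lemma~\ref{unbar} the differential commutes with cyclisation, so $H_*(\zeta^{(\bullet)}_A)$ is cut out of $H_*(\tilde\zeta)$ by the exact operation of taking $\Z_N$-(co)invariants, and it is enough to prove the claim for $\tilde\zeta$. Since the differential respects a natural grading under which $\tilde\zeta$ splits into finite-dimensional subcomplexes graded by $\xi$-degree $l=N-n$, one may then argue in each fixed multidegree, and must show the resulting finite complex has homology only in $\xi$-degree zero, that is on the diagonal $n=N$.

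\emph{The Gr\"obner step.} Fix an admissible monomial order on $\xi\p$-monomials --- reading each word linearly from its distinguished first label ($\xi$ or $\p_i$) and breaking ties by the gradings --- and pass to the associated graded complex of the corresponding monomial filtration, whose differential sends each basis monomial $u$ to a sign times the leading term of $d(u)$, or to zero. The core of the argument is the combinatorial claim that \emph{every $\xi\p$-monomial of positive $\xi$-degree occurs as the leading term of $d(v)$ for a suitable $v$}: a $\xi$ sitting next to a generator $x_i$ can be ``folded'' into a single $\p_i$, yielding $v$ with $d(v)=\pm u+(\text{strictly smaller monomials})$ via the Koszul-type part of $d$, while the configurations not reachable this way are absorbed by the necklace terms. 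Granting this, in the associated graded complex the image of $d$ spans everything of positive $\xi$-degree, so its homology is concentrated in $\xi$-degree zero; the spectral sequence of the filtration then forces $H_*(\tilde\zeta)$ itself to be supported on the diagonal $n=N$. Re-assembling the multidegrees and undoing the reduction yields purity of $\tilde\zeta$, of $\zeta^{(\bullet)}$, and hence of $C^{(\bullet)}$.

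\emph{Main obstacle.} The real work is the combinatorial claim of the previous step: choosing the monomial order so that \emph{every} $\xi\p$-monomial of positive $\xi$-degree is a leading term of some boundary, and verifying this through the finite but delicate case analysis of the folding move --- in particular the ``folding-resistant'' configurations, handled by the necklace terms of $d$. This is exactly where the hypothesis $r\ge 2$ (respectively $|\Qc_0|\ge 2$) becomes essential: for the excluded algebra $\K[x]$ these configurations produce genuine homology off the diagonal, so the statement is false there.
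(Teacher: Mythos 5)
Your overall strategy shares the paper's framing: reduce $C^{(\bullet)}$ to the small subcomplex $\zeta$ of $\xi\delta$-monomials, pass to the fixed-point complex $\tilde\zeta$ via Lemma~\ref{unbar}, and attack the homology using Gr\"obner-basis combinatorics for the ideal generated by $\Delta=\sum\delta_ix_i-x_i\delta_i$. However, the central step of your argument --- that \emph{every} $\xi\delta$-monomial of positive $\xi$-degree is the leading term of some $d(v)$, so that after passing to the associated graded the differential is surjective onto every place of positive $\xi$-degree --- is simply false. The differential raises $\delta$-degree and $x$-degree and lowers $\xi$-degree, and it is very far from surjective. For a concrete counterexample, look at the slice of weight $2$, $\xi$-degree $1$, $x$-degree $1$: its basis is $\{\delta_i\xi x_j,\ \delta_i x_j\xi,\ \xi\delta_i x_j,\ \xi x_j\delta_i\}$, a space of dimension $4r^2$, while the boundaries entering this slice come only from $\xi\xi$, so the image of $d$ there is one-dimensional (spanned by $d(\xi\xi)=\sum_i\delta_ix_i\xi-\sum_i\delta_i\xi x_i$). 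In particular $\delta_1\xi x_1$ is not a leading term of any boundary, and the monomial contains no $\delta_ix_i$ or $x_i\delta_i$ substring, so your ``folding'' move does not even apply. Purity nevertheless holds because the \emph{cycles} in that slice are a small subspace contained in the image --- the content of the theorem is $\ker d\subseteq\operatorname{im}d$, not $\operatorname{im}d=$ everything, and no choice of admissible order can make the associated-graded differential surjective.

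The paper's actual mechanism is structurally different and fills exactly the hole your proposal hand-waves past with ``absorbed by the necklace terms.'' It is an induction on the weight $m$ that reconstructs preimages of cycles: first for an auxiliary differential $d_{\widehat\zeta}$ (without the end/necklace terms), using the Diamond Lemma for the ideal $\operatorname{Id}(\Delta)$ --- the leading term $\delta_1x_1$ has no self-overlaps, so the syzygy module is generated by trivial syzygies, and a vanishing sum $\sum a_j\Delta b_j=0$ can be rewritten so that substituting $\xi$ for $\widehat\Delta$ produces an explicit preimage; then, for the genuine differential $d$, the extra terms produced by a leading $\xi$ force an equation of the form $\sum_i\delta_i[x_i,u_0]=0$ in $\K\langle x,\delta\rangle/\operatorname{Id}(\Delta)$, and the paper proves a \emph{centralizer lemma}: when $r\ge 2$ (equivalently $|\Qc_0|\ge 2$ in the quiver case) the common centralizer of $x_1,\dots,x_r$ in this quotient is just $\K$, forcing $u_0$ to be a boundary and allowing the induction to close. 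This centralizer lemma is precisely where the hypothesis $r\ge2$ enters; you correctly sense it is essential but your outline never locates it, because your argument does not isolate the $\xi$-leading component $u_0$ on which the obstruction lives. Without that lemma and the syzygy-module step, the ``folding-resistant'' configurations you flag as a difficulty are not merely delicate --- they are where the proof actually lives.
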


In other words, Theorem\ref{M} holds for  path algebra $P\Qc$ of an arbitrary quiver $\Qc$, except for the quiver with one vertex and one loop.

 This purity result is obtained via use of $\xi\delta-monomials$ as a basis of qiso subcomplex and
of the Gr\"obner bases theory in the ideals of free algebra generated by the element defined by the differential of this complex. Similar techniques and the Gr\"obner bases theory in the ideals of path algebras are used for the case when $A$ is the path algebra of a quiver $A=P\Qc.$

As a consequence of purity result we are able to deduce $L_{\infty}$-formality for this complex over  $A=\K\l x_1,..., x_r \r, r\geq 2$ or  $A=P\Qc, |\Qc_0| \geq 2$, using standard arguments related to pertrubation theory, similar to the ones appeared in \cite{D}.

\begin{definition}
The DGLA $(C,d)$ is called {\it formal} if it is quasi-isomorphic to its cohomologies
$H^{\bullet}C.$
\end{definition}

\begin{definition}
The complex $(C,d)$ is $L_\infty-formal$    if it is $L_\infty$ quasi-isomorphic to its cohomologies
$(H^{\bullet}C, 0)$, considered with zero differential, that is there exists an $L_\infty$-morphism, which is a quasi-isomorphism of complexes.
\end{definition}

Since there are more $L_\infty$-morphisms between given DGLAs, than just DGLA morphisms, the notion of $L_\infty$-formality is weaker than formality. But it is exactly what is needed for deformation theory. One of the main points in
 \cite{KP} emphasise that what really determines the deformation functor,  is not just qiso class of a DGLA, but its qiso class as $L_{\infty}$-algebra.  Thus the best thing one can achieve in understanding the deformation theory is to prove
 $L_{\infty}$-formality of corresponding DGLA.


\begin{theorem}  Let $A$ be a free algebra with at least two generators,
$A=\K\l x_1,..., x_r \r, r\geq 2$, or a path algebra of a quiver with at least two vertices, $A=P\Qc, |\Qc_0| \geq 2$.
Then the higher cyclic Hochschild complex $\chH(A)=C^{(\bullet)}(A)=\prod\limits_N C_{cycl}^{(N)} (A)$
 is $L_{\infty}$-formal.
\end{theorem}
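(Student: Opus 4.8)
The plan is to derive $L_\infty$-formality from the homological purity established in Theorem~\ref{M} via a standard perturbation-theoretic argument, in the spirit of \cite{D}. The key structural input is that the higher cyclic Hochschild complex $C^{(\bullet)}(A)$ carries the cohomological grading by $\xi$-degree, and by Theorem~\ref{M} its cohomology $H^\bullet C^{(\bullet)}(A)$ is concentrated in a single cohomological degree (the top one, the diagonal $n=N$). A DGLA (or $L_\infty$-algebra) whose cohomology is concentrated in one degree is automatically $L_\infty$-formal: one transfers the DGLA structure along a contraction onto the cohomology using the homotopy transfer theorem (homological perturbation lemma), obtaining an $L_\infty$-structure $\{\ell_k\}$ on $H^\bullet$ together with an $L_\infty$-quasi-isomorphism $H^\bullet \to C^{(\bullet)}$; then one checks that all the transferred higher brackets $\ell_k$ for $k\geq 2$, and in fact the transferred differential $\ell_1$, must vanish for degree reasons, because each $\ell_k$ shifts the cohomological grading by a fixed amount that is incompatible with $H^\bullet$ living in a single degree.

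First I would make precise the grading bookkeeping: record the cohomological degree (by $\xi$-degree, equivalently $N-n$) of every operadic operation, and verify that the generalized necklace bracket $[\,\cdot,\cdot\,]_{g.n}$ and the differential $d$ have the expected degrees $0$ and $+1$ with respect to it (this is implicit in the set-up of section~\ref{sH} and section~\ref{hp}). Next I would invoke Theorem~\ref{M} to conclude $H^i(C^{(\bullet)}) = 0$ for all $i$ except the single diagonal value; here I should be slightly careful that the theorem is proved for the fixed-point complex $\tilde\zeta$ and transported to $C^{(\bullet)}$ via the quasi-isomorphism $\zeta^{(\bullet)}_A \hookrightarrow C^{(\bullet)}(A)$ and the cyclisation argument (Lemma~\ref{unbar}), so the purity statement genuinely holds for $\chH(A)$ itself. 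Then I would set up an explicit contraction (projection, inclusion, homotopy) of $C^{(\bullet)}(A)$ onto its cohomology — this exists since we are over a field $\K$ — and apply homotopy transfer to produce the $L_\infty$-structure on $H^\bullet$ and the $L_\infty$-morphism. Finally I would run the degree argument: the transferred operations $\ell_k \colon (H^\bullet)^{\otimes k} \to H^\bullet$ have cohomological degree $2-k$ and hence, since $H^\bullet$ is concentrated in one degree, must vanish for $k \neq 2$; the binary bracket $\ell_2$ is the induced bracket on cohomology, and the differential $\ell_1$ is the induced (zero) differential, so $(H^\bullet, 0)$ with its induced Lie bracket receives an $L_\infty$-quasi-isomorphism from $C^{(\bullet)}(A)$, which is precisely $L_\infty$-formality.

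I expect the main obstacle to be a careful treatment of the completed/infinite-product nature of the complex, $C^{(\bullet)}(A) = \prod_N C^{(N)}_{cycl}(A)$, and the fact that $C^{(\bullet)}(A)$ is only quasi-isomorphic (not equal) to the small subcomplex on which the combinatorics and the Gröbner-basis purity proof live. One must ensure that the homological perturbation lemma applies in this infinite-product setting — that the homotopy and the structure maps are compatible with the filtration/completion and that the infinite sums defining the transferred $\ell_k$ converge — and that the quasi-isomorphism $\zeta^{(\bullet)}_A \hookrightarrow C^{(\bullet)}(A)$ is compatible with the DGLA structures, so that formality of one yields formality of the other. The actual perturbation-theory manipulations, once the framework is fixed, are routine and parallel to \cite{D}; the genuinely delicate point is packaging the purity of $\tilde\zeta$ into purity of $H^\bullet(C^{(\bullet)}(A))$ with all gradings intact, after which the degree argument kills every higher bracket essentially for free.
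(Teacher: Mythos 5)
Your proposal follows essentially the same route as the paper: invoke the purity result (Theorem~\ref{M}) to conclude $H^\bullet(\chH)$ is concentrated in a single cohomological degree, apply Kadeishvili's homotopy transfer to obtain an $L_\infty$-structure on the cohomology together with an $L_\infty$-quasi-isomorphism, and then kill all the transferred higher brackets by a degree count. The only cosmetic difference is that where you argue directly that each $\ell_k$ has cohomological degree $2-k$ and hence vanishes when the cohomology lives in one degree, the paper packages the same degree bookkeeping as a $\C^*$-equivariance argument (scaling by the grading ``outputs minus inputs'' commutes with the transfer, so the transferred operations respect the grading, and only the degree-zero operation $m_2$ can survive); these are mathematically the same observation. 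You are right to flag the completed/infinite-product nature of $\chH = \prod_N C^{(N)}_{cycl}$ as a point deserving care — the paper does not dwell on it — but it does not change the shape of the argument.
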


Thus the line of our study related to the deformation theory got its best possible outcome in case of  free algebras. The $L_{\infty}$-formality holds also
for such smooth (quasi-free) algebras, as path algebras of quivers, but certainly not  for an arbitrary associative algebras concentrated in degree zero, or for $\Z$-graded free algebras.

\section{Definitions}\label{def}

The typical example of an  algebra in this paper is a free associative algebra $A=\l x_1,...,x_r\r$, the most noncommutative algebra possible. We develop elements of noncommutative geometry based on this algebra following the spirit of \cite{KR, Kf}. For example, we adopt the ideology introduced and developed  in these papers, which says that noncommutative structure should manifest as a corresponding commutative structure on representation spaces. We develop the ideas of these papers further and introduce a calculus of 'highly noncommutative monomials': monomials which can be multiplied not only from the right and from the left, but from any number of specified directions, we call them $\xi\delta$-monomials.

Throughout the text $A$  will be an associative unital algebra over the field $\K$ of characteristic zero, if not specified otherwise.
Denote by $A-mod-A$ the category of all $A$-bimodules, which is the same as $A^e$-modules, i.e. modules over the enveloping $A^e=A\oo A^{op}$. We consider mainly Homs of $A$-bimodules or $A^{\oo N}$-bimodules which we denote $\Hom_{A-mod-A}$ or
$\Hom_{A^{\oo N}-mod-A^{\oo N}}$ respectively.

To give a definition of pre-Calabi-Yau structure as it was originally defined in \cite{Ktalk}, \cite{KV}, \cite{Seidel} we
start with reminding the definition of $A_{\infty}$-algebra, or {\it strong homotopy associative algebra} introduced by Stasheff \cite{St}.

First note, that there are two accepted conventions on grading of an $A_\infty$-algebra. They differ by a shift in numeration of graded components.
In one convention, we call it {\it shifted convention}, each operation has degree 1. While the other, which we call a {\it naive convention} is determined by making the binary operation to have degree 0, hence the degrees
of operations $m_n$ of arity $n$ become $2-n$.
 If the degree of element $x$ in {\it naive convention} is  ${\rm deg}x=|x|$, then shifted degree in $A^{sh}=A[1]$, which fall into {\it shifted convention}, will be ${\rm deg^{sh}}x=|x|'$, where  $|x|'=|x|-1$, since $x \in A^i=A[1]^{i+1}$.

  The formulae for the graded Lie bracket, Maurer-Cartan equations and cyclic invariance of the inner form are somewhat different in different conventions. We mainly  use the {shifted convention}, but sometimes need  the { naive convention} as well.

Let $A$ be a $\Z$ graded vector space $A=\mathop{\oplus}\limits_{n \in \Z} A_n$, and  $C^l(A,A)$ be Hochschild cochains $C^(A,A)=\underline{\Hom}_{}(A^{\oo l}, A)$, for $l\geq 0$, $C^{\bullet}(A,A)= \prod\limits_{k\geq 1} C^l(A,A).$

On
$C^{\bullet}(A,A)[1]$ there is a natural structure of graded pre-Lie algebra, defined via composition:
$$\circ:
C^{l_1}(A,A) \oo C^{l_2}(A,A) \to C^{l_1+l_2-1}(A,A):$$
$$ f \circ g (a_1 \oo ... \oo a_{l_1+l_2-1}) =$$
 $$\sum (-1)^{| g| \sum\limits_{j=1}^{i-1} |a_j|} f(a_1 \oo ... \oo a_{i-1} \oo g(a_i \oo ... \oo a_{i+l_2+1}) \oo ... \oo a_{l_1+l_2-1})$$

 The operation $\circ$ defined in this way does satisfy the graded right-symmetric identity:
$$(f,g,h)=(-1)^{|g||h|}(f,h,g)$$
where
 $$(f,g,h)=(f\circ g)\circ h - f\circ (g \circ h).$$

 As it was shown in  \cite{G} the graded commutator on a graded pre-Lie algebra defines a graded Lie algebra structure.

 Thus the Gerstenhaber bracket $[-,-]_G$:
$$[f,g]_G= f\circ g - (-1)^{| f|| g|} g \circ f$$
makes $C^{\bullet}(A)$ into a graded Lie algebra. Equipped with the derivation $d={\rm ad} \,\, m_2$, $\,\,\, (C^{\bullet}(A), m_2)$ becomes a DGLA, which is a Hochschild cochain complex.

 Graphically the corresponding composition can be depicted as follows.

 \vskip1cm

\phantom0\hskip1cm
\includegraphics[scale=0.9]{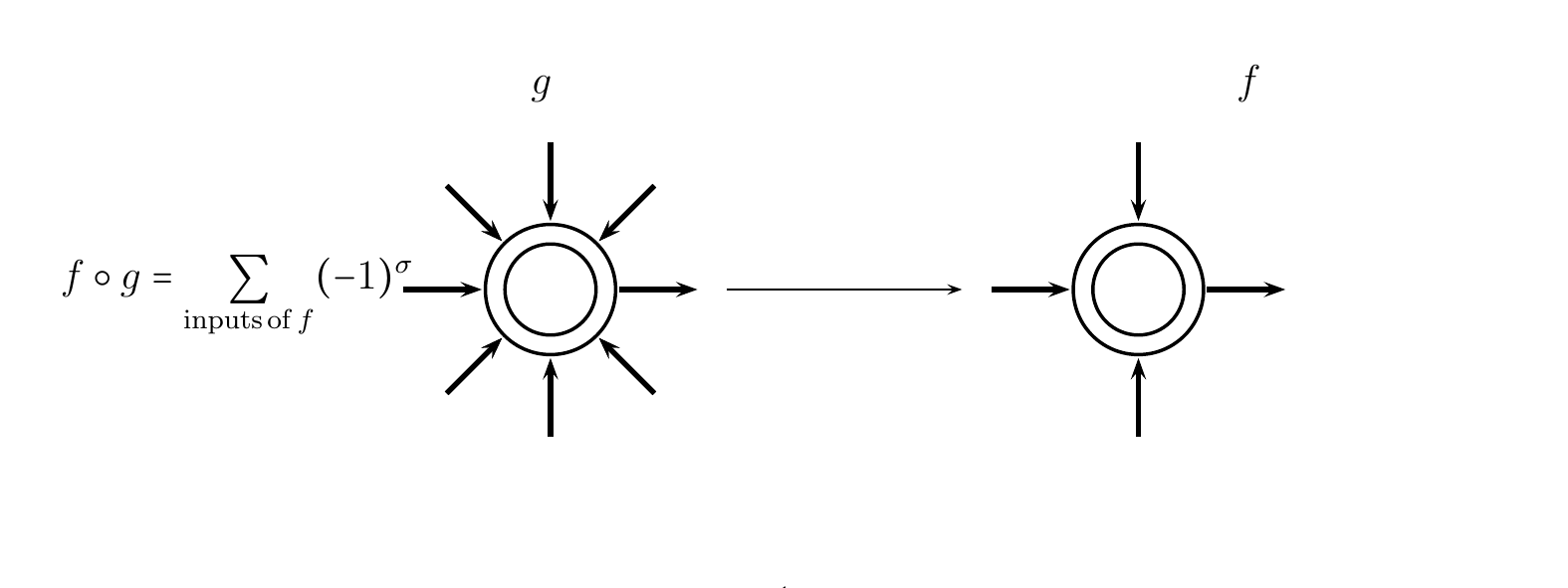}
\hfill
\phantom0

 With respect to the Gerstenhaber bracket $[-,-]_G$  we have  the Maurer-Cartan equation

\begin{equation}\label{MC}
[m^{(1)},m^{(1)}]_G=\sum\limits_{p+q=k+1} \sum\limits_{i=1}^{p-1} (-1)^\epsilon m_p(x_1,\dots,x_{i-1},m_q(x_j,\dots,x_{i+q-1}),\dots,x_k)=0,
\end{equation}
where
$$
\epsilon=|x_1|'+{\dots}+|x_{i-1}|',\qquad |x_i|'=|x_i|-1={\rm deg}x_i-1
$$

The  Maurer-Cartan equation in the { naive convention} looks like:

\begin{equation}\label{MC0}
[m^{(1)},m^{(1)}]=\sum\limits_{p+q=k+1} \sum\limits_{i=1}^{p-1} (-1)^\epsilon m_p(x_1,\dots,x_{i-1},m_q(x_j,\dots,x_{i+q-1}),\dots,x_k)=0,
\end{equation}
where
$$
\epsilon=i(q+1)+q(|x_1|+{\dots}+|x_{i-1}|),
$$

\begin{definition} An element $m^{(1)} \in C^{\bullet}(A,A)[1]$ which satisfies the Maurer-Cartan equation $[m^{(1)}, m^{(1)}]_G=0$ with respect to the Gerstenhaber bracket $[-,-]_G$ is called an
$A_{\infty}$-structure on $A$.

\end{definition}

Equivalently, it can be formulated in a more compact way as a coderivation on the coalgebra of the bar complex of $A$.

In particular, for example, associative algebra with zero derivation $(A, m=m_2^{(1)})$ is an  $A_{\infty}$-algebra. The  component of the Maurer-Cartan equation of arity 3 says that the binary operation from this structure, the multiplication $m_2$ is associative:
$$(ab)c-a(bc)=dm_3(a,b,c)+(-1)^{\sigma}m_3(da,b,c)+(-1)^{\sigma}m_3(a,db,c)+(-1)^{\sigma}m_3(a,b,dc)$$

We can give now one of  definitions of pre-Calabi-Yau structure (in the { shifted convention}).

\begin{definition}\label{pcy1} A d-pre-Calabi-Yau structure on a finite dimensional $A_{\infty}$-algebra $A$ is

 (I). an $A_{\infty}$-structure on $A \oplus A^*[1-d]$,

 (II). cyclic invariant with respect to natural non-degenerate pairing on $A \oplus A^*[1-d]$,
meaning:

$$\l m_n(\a_1,...,\a_n), \a_{n+1}\r=(-1)^{|\a_1|'(|\a_2|'+...+|\a_{n+1}|')}\l m_n(\a_2,...\a_{n+1}),\a_1)\r$$
where the inner form $\l,\r$ on $A\oplus A^*$   is defined naturally as
$\l (a,f),(b,g)\r=f(b)+(-1)^{|g|' |a|'} g(a)$ for $a,b \in A, f,g \in A^*$

(III)
and such that $A$ is an $A_{\infty}$-subalgebra in $A \oplus A^*[1-d]$.
\end{definition}


The signs in this definition, written in shifted convention, are
assigned according to the Koszul rule.
Note, by the way, that in the naive convention,
the
cyclic invariance condition with respect to the natural non-degenerate pairing on $A \oplus A^*[1-d]$ from (II) sounds:
$$\l m_n(\a_1,...,\a_n), \a_{n+1}\r=(-1)^{n+|\a_1|'(|\a_2|'+...+|\a_{n+1}|')}\l m_n(\a_2,...\a_{n+1}),\a_1\r.$$
The appearance of the arity $n$, which influence the sign in this formula, does not really follow the Koszul rule, this is the feature of the { naive convention}, and this is why the  shifted one is preferable.

The cyclic invariance and inner form symmetricity in the {shifted convention} look like:

\begin{equation}\label{cyc1}
\l m_n(\a_1,...,\a_n), \a_{n+1}\r=(-1)^{|\a_1|'(|\a_2|'+...+|\a_{n+1}|')}\l m_n(\a_2,...\a_{n+1}),\a_1)\r.
\end{equation}

\begin{equation}\label{sym2}
\langle x,y \rangle = - (-1)^{|x|'\,|y|'} \langle y,x \rangle
\end{equation}

Since the bilinear  form on $A\oplus A^*$, which gives natural pairing, has degree zero (in non-shifted convention), the $A^*$ in the above definition is shifted by $1-d$ in order the corresponding cyclic Calabi-Yau structure on $A\oplus A^*$ is of degree $d$.

 The most simple example of pre-Calabi-Yau structure demonstrates that this structure does exist on any associative algebra. Namely, the structure of associative algebra on $A$ can be extended to the associative structure on $A\oplus A^*[1-d]$ in such a way, that the natural inner form is (graded)cyclic with respect to this multiplication. This amounts to the following fact: for any $A$-bimodule $M$ the associative multiplication on $A \oplus M$ is given  by
$ (a+f)(b+g)=ab+af+gb.$ In this simplest situation both structures on $A$ and on $A\oplus A^*$ are in fact associative algebras.
More examples one can find, for example, in \cite{I},
  \cite{DK}.

 Note that the notion of pre-Calabi-Yau algebra introduced in
\cite{KV}, \cite{Seidel}, \cite{TZ}, as an $A_{\infty}$-atructure on $A\oplus A^*$,  uses the fact that $A$ is finite dimensional, since there is no natural grading on the dual algebra $A^*={\rm Hom}(A,\K)$, induced form the grading on $A$
 in  infinite dimensional case.
One can reformulate it to give the general definition via the higher cyclic Hochschild complex (see \cite{KV}, \cite{Ktalk}), not requiring any finiteness conditions. 

This reformulation is based on the fact that due to the cyclic invariance of the natural (evaluation) pairing on $A\oplus A^*$, any tensor of the type $C_1\oo...\oo C_k \oo A$, where $C_i=A$ or $A^*$, can be considered as element of $\Hom_{\K} (C_1^*\oo...\oo \C_k^*, A)$. Generally, in linear map from one tensor
$C_1\oo ...\oo C_{l}$ to another $C_{l+1}\oo ...\oo C_{k+1}$ input from $A$ can be made into output from $A^*$ and vice versa.
The obtained in this way new definition will be given in the next section.
It is equivalent to the  definition above, where the ${\rm Hom}(A,\K)$  considered as graded Hom:
  $A^*=\oplus (A_n)^*=\underline{{\rm Hom}}(A,\K)$, in case the graded components of $A$ are finite dimensional.

We also should remark here that the theorem saying that pre-Calabi-Yau algebras give rise to TQFTs, analogous to the one proved in \cite{TZ}, for the definition~\ref{pcy1}, holds also for the definition via the higher cyclic Hochschild complex.

\section{Higher cyclic Hochschild complex}\label{hH}

We start with  the definition of the
{\it higher cyclic Hochschild cochains}  and {\it generalised necklace bracket}.

First let us consider higher Hochschild cochain complex $\hH$:
$$
C^{(N),n}(A):= \bigoplus\limits_{{r_1,...,r_N \geq 0}\atop{r_1+...+r_N=n}}
{\rm Hom}_{A^{\oo N} - mod - A^{\oo N}} (A^{\oo i_1}\oo...\oo A^{\oo i_N},  A^{\oo N}_{cycl} )
$$
The complex $C^{(N)}= \prod\limits_{n} C^{(N),n}$ is defined as $A^{\oo N}$-bimodule $\Hom$ from the $N$th power of the bar complex $\B(A)$:
$$C^{(N)}={\rm Hom}_{A^{\oo N} - mod - A^{\oo N}}(\cB^{\oo N}, A^{\oo N}_{cycl}) $$
to the  ${A}^{\oo N}$-bimodule  $A^{\oo N}_{cycl}$ with the following bimodule structure.
For any $x_1\oo...\oo x_N \in{A}^{\oo N }_{cycl} $ and elements $a_1\oo...\oo a_N, b_1\oo...\oo b_N \in{A}^{\oo N }$,
$$(a_1\oo...\oo a_N)\bullet (x_1\oo...\oo x_N)\bullet(b_1\oo...\oo b_N)= a_1x_1b_2 \oo ... \oo a_N x_N b_1.$$
Now we define the higher cyclic Hochshild complex $\chH$.

\begin{definition}\label{hh} For $N\geq 1$ the space of {\it N-higher cyclic Hochschild cochains} is defined as

$$
C^{(N)}_{cycl}(A):=
\prod_{r_1,...,r_N \geq 0}
{\rm Hom}_{A^{\oo N} - mod - A^{\oo N}}
(\mathop{\otimes}\limits_{i=1}^{N} A^{\oo r_i}, A^{\oo N}_{cycl})^{\Z_N},
$$
\end{definition}
The differential is coming from the bar  complex of $A^{\oo N}$-bimodules, after it is dualised  by \\
 ${\rm Hom}_{
A^{\oo N}-mod-A^{\oo N}}(-,A_{cycl}^{\oo N})$. It will be written precisely in section~\ref{sH}.

Elements of
${\rm Hom}(\mathop{\otimes}\limits_{i=1}^{N} A^{\oo r_i}, A^{\oo N}_{cycl})$
can be obviously   interpreted as
collections of $N$  operations with one output each.
This interpretation is obtained if we first pass to the isomorphic complex over $\K$. Then elements of ${\rm Hom}_{\k} \prod\limits_{r_1,...,r_N\geq 0}(A^{\oo r_i-2}, A_{cycl}^{\oo N})$ are interpreted as a collection of $N$  operations.

In this interpretation it is easy to see that there is a natural $\Z_N$ group action on ${\Hom}(\mathop{\otimes}\limits_{i=1}^{N} A^{\oo r_i}, A^{\oo N}_{cycl})$, which cyclically permutes operations and assigns a sign $(-1)^{(d-1)(N-1)} $, according to the Koszul rule (and taking into account degree of operation). 

The difference between $\hH$ and $\chH$ is that the latter consists only of elements invariant under this $\Z_N$ action, which we denote ${\rm Hom}(\mathop{\otimes}\limits_{i=1}^{N} A^{\oo r_i}, A^{\oo N}_{cycl})^{Z_N}.$
Let us introduce new notation:
 $C^{(N,d)}= C^{(N)}_{cycl}(A)\subset  C^{(N)}(A)$,  $\,\, C^{(N,d)}= C^{(N)}_{(d-1)(N-1)({\rm mod } 2)},$ where
$C^{(N)}_0$ are cochains symmetric under cyclic permutation of operations,
and $C^{(N)}_1$ are antisymmetric cochains.
By this we stress that $\Z_N$-invariant elements of $\hH$, constituting $\chH$, consist of either symmetric or antisymmetric cochains, depending on $N$ and $d$.
The fact that we should take only $\Z_N$-invariant elements comes from the condition of cyclic symmetry of the $A_{\infty}$-structure on $A\oplus A^*[1-d]$ w.r.t. the natural pairing.

We can point out at the difference between $\hH$ and $\chH$ also by saying
 that elements of ${\Hom}(\mathop{\otimes}\limits_{i=1}^{N} A^{\oo r_i}, A^{\oo N}_{cycl})$ have a fixed point, corresponding to the operation from which we start, however in   ${\Hom}(\mathop{\otimes}\limits_{i=1}^{N} A^{\oo r_i}, A^{\oo N}_{cycl})^{\Z_N}$, after it is symmetrized, that is invariants are taken, the fixed (starting) point does not exist any more.

Denote by $\chH=C^{(\bullet)}_{cycl}(A)= \mathop{\prod}\limits_{N \geq 1}  C^{(N)}_{cycl}(A)$ the space of all higher cyclic Hochschild cochains. Further throughout the paper we frequently omit the subscript $cycl$, if it does not produce any confusion.



 The space of all higher cyclic  Hochschild cochains is denoted by $C^{(\bullet)}_{A}(A)$  or $C^{(\bullet)}_{\k}(A)$, depending on whether we are dealing with  $A^{\oo N}$ - bimodule Homs, or consider corresponding $\k$-module Homs. Sometimes we omit the $\k$ when it is clear from the context.

Note, that  $C^{(1)}_{\k}(A)$
 is the space of usual
  Hochschild cochains.
 To set up notations remind that the usual Hochschild cochains are:

$$
C^{(1)}(A):= \prod_{r}
{\rm Hom}_{A - mod - A}(A^{\oo r}, A):
$$

$$
A \to \Hom_{A-mod-A} (A,A) \mathop{\to}\limits^D \Hom_{A-mod-A} (A^{\oo 2}, A) \to ...,
$$

where $A$ is sitting in degree $0$, $\Hom (A,A)$ in degree $1$, etc. and
$$(D_{\k} h)(v_1\oo ...\oo v_{n-1}) = v_1h(v_2\oo...\oo v_{n-1})-h(v_1v_2\oo v_2...\oo v_{n-1})+...$$

$$(-1)^{n-2} h(v_1\oo...\oo v_{n-2}v_{n-1})+(-1)^{n-1}  h(v_1\oo...\oo v_{n-2})v_{n-1}.$$

Now, when we start to define a Lie bracket on the higher cyclic Hochschild complex $C^{(\bullet)}_{A}(A)$,
 it becomes important  which shifts of the grading on $A$ we chose, so we consider the higher cyclic Hochschild complex with the following shifts:
$$
\chH [1]=C^{(N)}_{cycl}(A)[1]:= \prod_{r_1,...,r_N \geq 0}
{\Hom}_{A^{\oo N} - mod - A^{\oo N}}
(\mathop{\otimes}\limits_{i=1}^{N} A[1]^{\oo r_i}, A[1]^{\oo N}_{cycl})^{\Z_N}[1].
$$

 \begin{definition}\label{neckl} The {\it generalized necklace bracket} between two elements $f,g \in C_{cycl}^{(N)}(A)$ is given as
  $[f,g]_{g.n}= f\circ g - (-1)^{\sigma} g \circ f,$ where composition $f\circ g$ consists of inserting all outputs of $g$ to all inputs from $f$ with signs assigned according to the Koszul rule.
  \end{definition}

The composition for the generalised necklace bracket can be graphically depicted as follows:
\vskip1cm

\phantom0

\phantom0\hskip2cm\includegraphics[scale=0.8]{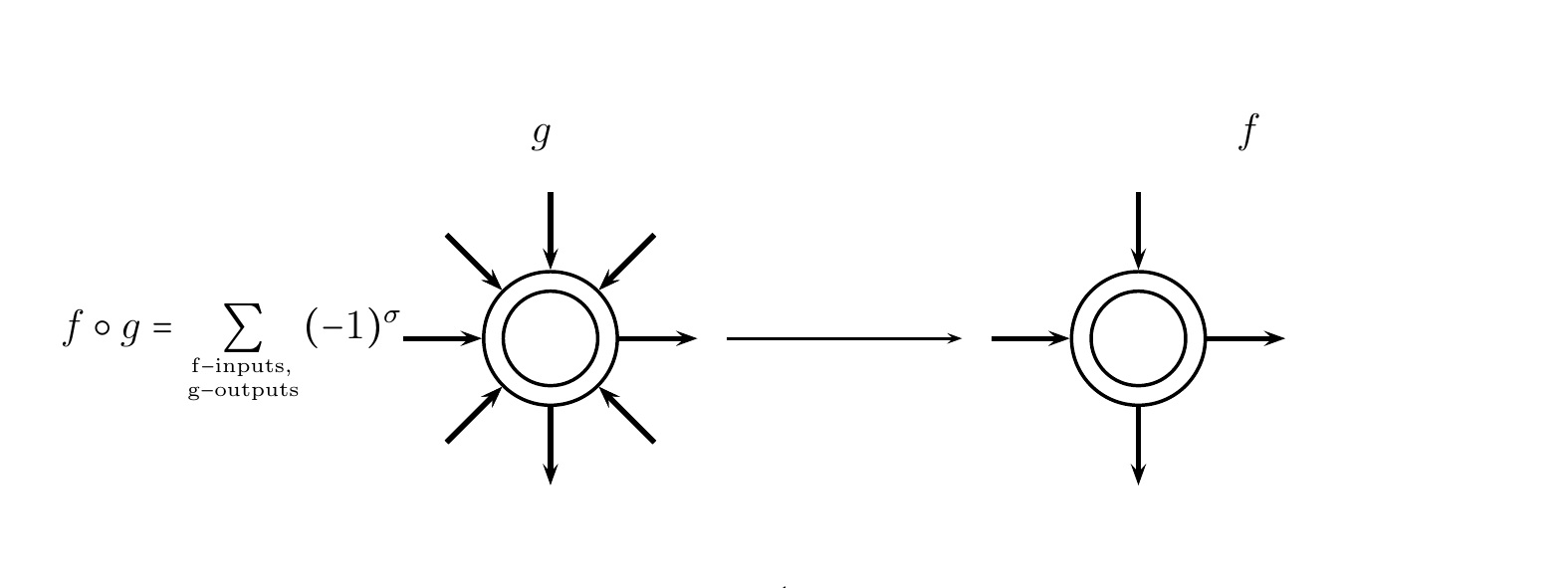}
\hfill

\phantom0



 Note that the (starting) point is not fixed in the elements of our complex (operations), thus
   generalized necklace bracket should produce also operations without a fixed point. Thus we need to clarify what means
   'insertion'  of one operation into another in the cyclic situation of definition above:
     we should think of insertion of operations with fixed point according to the above rule, and then symmetrizing the result, by taking
   each resulting operation with all possible fixed points to
   the output.

Since the defined above composition $f\circ g$ makes $\chH[1]=C_{cycl}^{(\bullet)}[1]$ into a graded pre-Lie algebra, the generalized necklace bracket obtained from it as a graded commutator, makes $C_{cycl}^{(\bullet)}[1]$ into a graded Lie algebra.
We denote it by ${\bf g}=(C_{cycl}^{(\bullet)}(A)[1], [,]_{g.n}).$

\begin{definition}\label{pcy2}
The pre-Calabi-Yau structure on $A$ is an element $m=\mathop{\sum}\limits_{N\geq 0} m^{(N)}$, $m^{(N)} \in C^{(N,d)}(A)[1]$
from the space of the $\Z_N$-invariant higher cyclic Hochschild cochains
$$\chH[1]=C^{(N,d)}[1] =\prod\limits_{r_1,...,r_N \geq 0}
 {\Hom}_{A^{\oo N} - mod - A^{\oo N}}
(\mathop{\otimes}\limits_{i=1}^{N} A[1]^{\oo r_i}, A[1]^{\oo N}_{cycl})^{\Z_N}[1],$$
   which is  a solution to the Maurer-Cartan equation $[m,m]_{g.n}=0$ with respect to generalised necklace bracket.
\end{definition}

Since in shifted by 1 situation operations have degree one, those elements of the $\chH [1]$ which correspond to $d$-pre-Calabi-Yau structure should have degree $(d-1)(N-1)$.
While  reformulating the definition  we take  into account that in the  definition~\ref{pcy1}  $A^*$ is shifted by $1-d$.

For the sake of clarity,  simplicity, and  since the main formality result holds only in this situation, we mainly consider here the grading, where $A$ is sitting in degree zero: $A_0=A$. This prompts us to deal with 2-pre-Calabi-Yau structures.



In the case $n=2, N=2$, which will be interesting for us in the next section, the definition says that we should take antisymmetric elements of $\hH$ into $\chH$ complex.

\section{Double Poisson bracket and the Maurer-Cartan equation}\label{dP}

In this section we discuss a correspondence between particular part of pre-Calabi-Yau structure and the structure of double Poisson bracket invented by Van den Bergh \cite{VdB} as a structure which produces the Poisson bracket on representation spaces.

There were many efforts to construct a reasonable notion of noncommutative Poisson bracket, which would according to the ideology of \cite{Kf, KR} induce a kind of  Poisson bracket on representation spaces or their moduli. First, the noncommutative Poisson bracket was defined in an obvious way: the same way as it is done in the commutative case, as a bracket on $A$, $\{-,-\}: A \to A$ which satisfy the Leibnitz rule: $\{a,bc\}=b\{a,c\}+\{a,b\}c$. But this notion turns out to be too restrictive, in \cite{FL} it was shown that defined this way bracket on noncommutative prime rings can be only commutator bracket $[a,b]=ab-ba$. There was a notion of 'noncommutative Poisson structure' introduced in \cite{X, BG}, but it is only known that this bracket on the center of $A$ produces the usual commutative Poisson structure, it is unclear what it gives on moduli of representations. Then the attempts to introduce the notion of noncommutative Poisson bracket lead to a good definition in \cite{VdB, CB}. In \cite{VdB} the notion of the double Poisson bracket was defined as a map $\dd \cdot, \cdot \c: A\oo A \to A\oo A$, satisfying the axioms which are certain generalization (thickening) of the usual Poisson axioms of anti-symmetry, Leibnitz and Jacobi identities.
 In \cite{CB} the noncommutative Poisson structure, called $H_0$-Poisson structure was defined as a Lie bracket on zero Hochschild homology of $A$: $H_0=A/[A,A]$, such
that the map $\{ \bar a,- \}: A/[A,A] \to  A/[A,A] $ is induced by a derivation $d_a: A \to A$. These explained many effects, for example, clarified the study of quasi-poisson structures \cite{AYM}.
There were further developments like \cite{R}, but we are trying here to continue the line of initial ideas from \cite{Kf, KR}.

We will put the earlier approaches in a more general framework, which  explains a pattern of this generalisation (thickening) process from the perspective of the whole pre-Calabi-Yau structure. For example, the twisted structure of the diagonal bimodule $A^{\oo N}$ will show how multiple derivations associated to a noncommutative polyvector fields interfere.
We will  see also in this section how the 'double' definition comes as a particular part of  general situation, coming from pre-Calabi-Yau structure. Namely, we will demonstrate that double bracket is defined by $\xi\delta$-words with two $\delta$th and no $\xi$th.


Remind, that double Poisson  bracket is defined as a map $\dd \cdot, \cdot \c: A\oo  A \to A\oo A$ satisfying the following axioms:

Anti-symmetry:

\begin{equation}\label{sym}
\dd a,b \c = - \dd b,a \c^{op}
\end{equation}
Here $\dd b,a \c^{op}$ means the twist in the tensor product, i.e. if $\dd b,a \c =
\sum\limits_i b_i \oo c_i$, then $\dd b,a \c^{op}=\sum\limits_i c_i \oo b_i.$

Double Leibniz:

\begin{equation}\label{Leib}
\dd a,bc \c = b \dd a,c \c + \dd a,b \c c
\end{equation}

(here we use an outer bimodule structure on $A \oo A: \quad a(b\oo c)=ab \oo c, (a\oo b)c=a\oo bc),$
and double Jacobi identity:

\begin{equation}\label{Jac}
\dd a, \dd b,c \c\c_L + \tau _{(123)} \dd b, \dd c,a \c \c_L + \tau _{(132)} \dd c, \dd a,b \c \c_L
\end{equation}
Here for $a \in A\oo A \oo A, \,\,$ and $ \sigma \in S_3\,\,$
$$\tau _{\sigma}(a)=
a_{\sigma^{-1} (1)} \oo a_{\sigma^{-1} (2)} \oo a_{\sigma^{-1} (3)}.$$
The $\dd\,\c_L$ defined as
$$
 \dd b, a_1\oo...\oo a_n \c_L = \dd b, a_1 \c \oo a_1\oo ... \oo a_n
$$

The connection between the two structures is described by the following theorem.

\begin{theorem}\label{main0}
Let we have $A_{\infty}$-structure on $(A\oplus A^*,  m=\sum\limits_{i=2, i\neq 4}^{\infty}m_i^{(1)})$.
Define the bracket by the formula
$$
(*)\quad \quad \langle g\otimes f,\dd b,a\c\rangle:=\langle m_3(a,f,b), g \rangle,
$$
where $a,b\in A$, $f,g\in A^*$ and $m_3(a,f,b)=c\in A$ corresponds to the component of  solution to  the Maurer-Cartan $m_3$: $A\times A^*\times A\to A$ corresponding to the cyclic tensor $A\otimes A^*\otimes A\otimes A^*$. Then this bracket does satisfy all axioms of the double Poisson algebra.

Moreover, pre-Calabi-Yau structures
corresponding to the cyclic tensor
$A\otimes A^*\otimes A\otimes A^*$
  with $m_i=0, i\geq 4$ are in the bijective correspondence defined by $(*)$ with the double Poisson brackets
for an  arbitrary associative algebra $A$.
\end{theorem}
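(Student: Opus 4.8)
The plan is to match the three defining axioms of a double Poisson bracket --- antisymmetry \eqref{sym}, double Leibniz \eqref{Leib}, double Jacobi \eqref{Jac} --- with the pieces of the pre-Calabi-Yau structure, using the pairing formula $(*)$ together with the cyclic invariance \eqref{cyc1} and the symmetry \eqref{sym2} of the inner form, and then to read each correspondence backwards to obtain the bijection. One works throughout in the shifted convention with $d=2$, where the only shifted degrees occurring are $|a|'=-1$ for $a\in A$ and $|f|'=0$ for $f\in A^*[1-d]$, so the Koszul signs stay manageable. The first step is to recognise the component of $m_3$ isolated by $(*)$ --- the operation $A\oo A^*\oo A\to A$, $(a,f,b)\mapsto m_3(a,f,b)$, attached to the cyclic tensor $A\oo A^*\oo A\oo A^*$ --- as the $N=2$ higher cyclic Hochschild cochain with $r_1=r_2=1$, i.e.\ a single operation $\dd-,-\c\colon A\oo A\to A\oo A$; the generalised necklace Maurer--Cartan equation on the $N=2$ part of $\chH$ then corresponds to the ordinary $A_\infty$-relations on $A\oplus A^*$ restricted to this cyclic tensor. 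Note also that once $m_i=0$ for $i\ge 4$, cyclicity and the subalgebra condition force $m_2$ to be the canonical square-zero extension $m_2(a+f,b+g)=ab+af+gb$, so the whole structure is carried by $m_3$.

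For antisymmetry I would apply \eqref{cyc1} twice to $\l m_3(a,f,b),g\r$, rotating the four entries $a,f,b,g$: the two signs produced are $(-1)^{|a|'(|f|'+|b|'+|g|')}=(-1)^{(-1)(-1)}=-1$ and $(-1)^{|f|'(\cdots)}=+1$, so $\l m_3(a,f,b),g\r=-\l m_3(b,g,a),f\r$. Feeding this into $(*)$ and using $\l f\oo g,\dd a,b\c\r=\l g\oo f,\dd a,b\c^{op}\r$ gives $\dd b,a\c=-\dd a,b\c^{op}$, which is exactly \eqref{sym}; the implication reverses, so \eqref{sym} is precisely the $\Z_2$-cyclic symmetry of this component of $m_3$. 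For double Leibniz, the point is that \eqref{Leib} says exactly that $(a,f,b)\mapsto m_3(a,f,b)$ genuinely lies in $C^{(2)}$, i.e.\ is $A^{\oo 2}$-bilinear for the twisted bimodule structure $(a_1\oo a_2)\bullet(x_1\oo x_2)\bullet(b_1\oo b_2)=a_1x_1b_2\oo a_2x_2b_1$; in the $A\oplus A^*$ picture this is the content of the arity-$4$ Maurer--Cartan relation $[m_2,m_3]_G=0$, i.e.\ $d\,m_3=0$ with $d={\rm ad}\,m_2$. Unwinding the twist through $(*)$ and the outer bimodule structure on $A\oo A$, the expansion of $m_3(a_1a_2,f,b)$ splits into exactly the two summands of $b\dd a,c\c+\dd a,b\c c$; in the $\xi\delta$-monomial language this is manifest, which is why it is called ``automatic'' in the introduction.

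The serious part, and the step I expect to be the real obstacle, is double Jacobi. Since $m_i=0$ for $i\ge 4$, the equation $[m,m]_G=0$ on $A\oplus A^*$ has exactly three surviving components: arity $3$ (associativity of $m_2$), arity $4$ ($d\,m_3=0$, handled above), and arity $5$, namely $[m_3,m_3]_G=0$, which reads $\sum_{i=1}^{3}\pm\,m_3(x_1,\dots,x_{i-1},m_3(x_i,x_{i+1},x_{i+2}),\dots,x_5)=0$. Restricting this to the cyclic tensor $A\oo A^*\oo A\oo A^*\oo A\oo A^*$ and pairing with test covectors, one must check that under $(*)$ the three summands become exactly the three terms $\dd a,\dd b,c \c\c_L$, $\tau_{(123)}\dd b,\dd c,a \c\c_L$, $\tau_{(132)}\dd c,\dd a,b \c\c_L$ of \eqref{Jac}. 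The difficulty is twofold: (i) verifying that the Koszul signs forced by the shifted grading reproduce precisely the permutations $\tau_{(123)},\tau_{(132)}$ and the operator $\dd-,-\c_L$, and (ii) checking that the remaining insertions --- those producing an output in $A^*$ rather than $A$, together with any cross-terms --- cancel in pairs, so that only the three terms survive. It is here that $m_4=0$ is essential: a nonzero $m_4$ contributes a $[m_2,m_4]_G$ term of arity $5$ that deforms \eqref{Jac}, which is the general reason a pre-Calabi-Yau structure is a deformation of, rather than literally, a double bracket.

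For the ``moreover'' part I would run the construction in reverse. Given a double Poisson bracket $\dd-,-\c$, define $m_3$ on the component $A\oo A^*\oo A\to A$ by $\l m_3(a,f,b),g\r:=\l g\oo f,\dd b,a\c\r$ and zero on every other component, take $m_2$ to be the canonical square-zero extension of the multiplication of $A$, and $m_i=0$ for $i\ge 4$. The three computations above, read backwards, show that antisymmetry forces the cyclic invariance \eqref{cyc1}, double Leibniz forces the arity-$4$ relation, and double Jacobi forces the arity-$5$ relation; the remaining $A_\infty$-relations hold vacuously since only $m_2$ and $m_3$ are present, and $m_3$ vanishes on $A^{\oo 3}$, so $A$ is an $A_\infty$-subalgebra. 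Hence $m_2+m_3$ is a pre-Calabi-Yau structure of the stated type, and $(*)$ is a two-sided inverse of this assignment. For a general associative algebra $A$ one should phrase $m_3$ directly as the $N=2$ higher cyclic Hochschild cochain, so that everything takes place over the algebraic tensor powers $A^{\oo N}$; the arguments are identical.
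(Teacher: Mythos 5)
Your proposal reconstructs the paper's argument essentially verbatim: decompose the Maurer--Cartan equation by arity, identify antisymmetry with the $\Z_2$-cyclic invariance of the inner form, double Leibniz with the arity-4 relation $[m_2,m_3]_G=0$, and double Jacobi with the arity-5 relation $[m_3,m_3]_G=0$, with the dictionary pinned down by formula $(*)$; the paper's own proof is the same schematic sketch (deferring the sign bookkeeping to \cite{IK}), merely phrased through the $\chH$-picture of Definition~\ref{pcy2} rather than the $A\oplus A^*$-picture of Definition~\ref{pcy1} that you use. One small slip worth fixing: merely being an $A^{\oo 2}$-bimodule map in $C^{(2)}$ does not yield double Leibniz, since every $\K$-linear map $A\oo A\to A\oo A$ extends to such a morphism of the free bimodule $A^{\oo 3}\oo A^{\oo 3}$ --- the Leibniz rule is precisely the closedness/arity-4 condition (equivalently, factorization through $\Omega\oo\Omega$ in $\zeta^{(2)}$), which is what you correctly state in the second half of that same sentence.
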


The detailed proof of this theorem, taking into account signs and other details, was given in \cite{IK} (see also \cite{IKihesP}) in terms of
definition~\ref{pcy1} of pre-Calabi-Yau structure.

We present here main idea of this proof, using  definition~\ref{pcy2} via higher cyclic Hochschild complex. It looks very transparent this way, which emphasises another advantage of  this definition.

In terms of  definition~\ref{pcy2} the Maurer-Cartan equation on 'invariant' with respect to the action of cyclic group elements from
the higher cyclic Hochschild complex of particular kind, described in the theorem looks like:

\vskip1cm

\phantom0\hskip2cm
\includegraphics[scale=0.7]{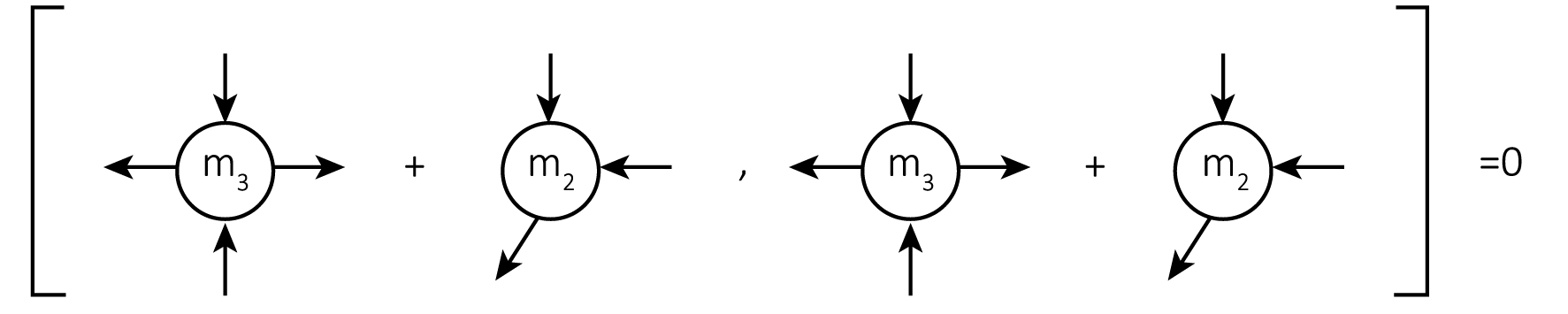}
\hfill

\phantom0

\vskip1cm

Hence  the  Maurer-Cartan (in appropriate arity)  is equivalent to the following equations:

\vskip1cm

\phantom0\hskip4cm
\includegraphics[scale=0.7]{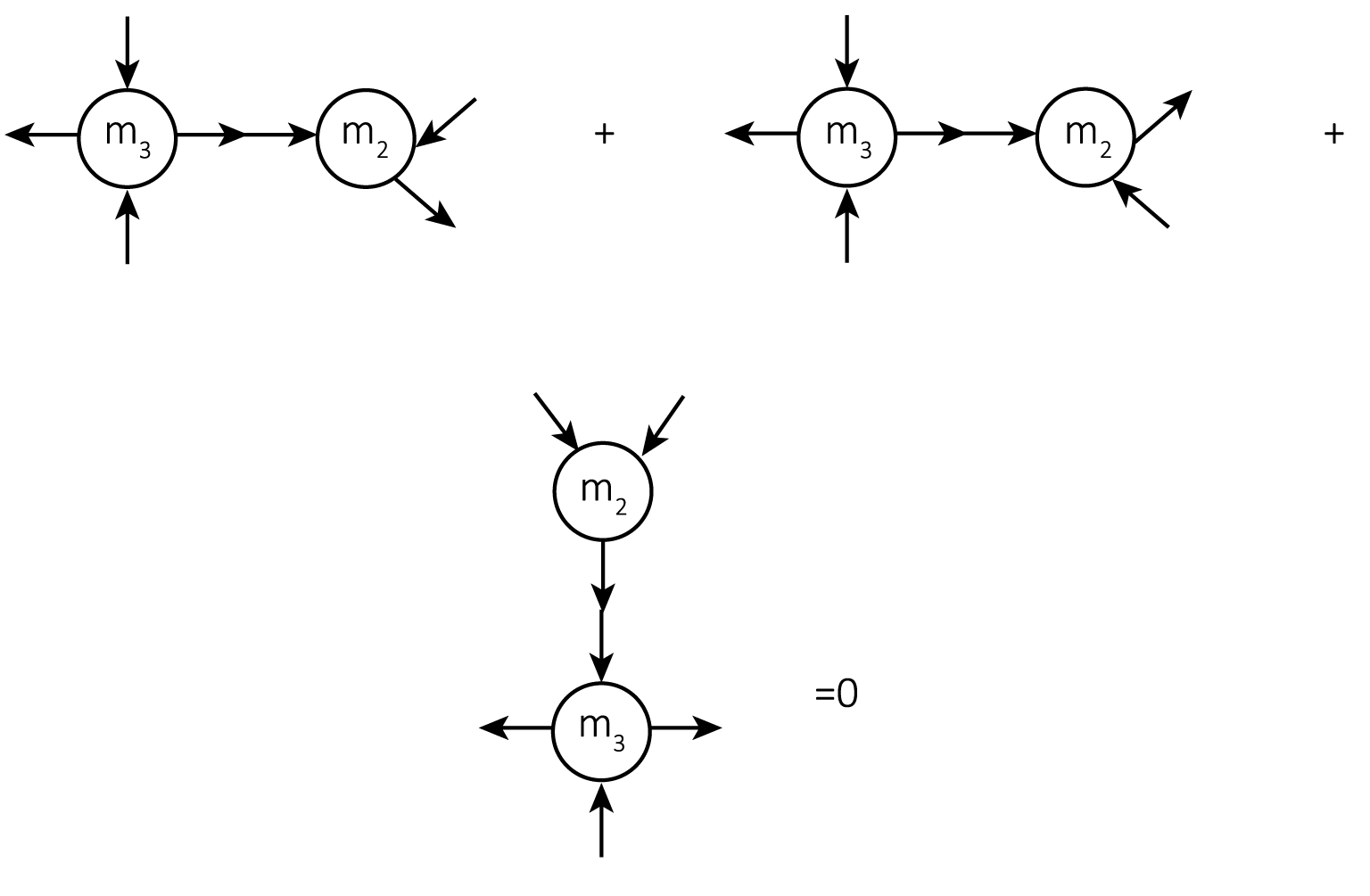}
\hfill

\phantom0

\vskip1cm

\phantom0\hskip4cm
\includegraphics[scale=0.2]{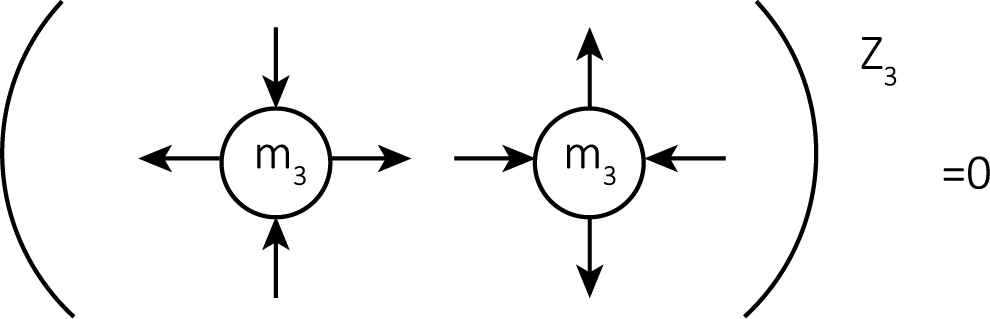}
\hfill

\phantom0

These two are clearly exactly the double  Leibnitz and double Jacobi identities respectively. The thing to be checked now is related to the following fact. The element of the higher cyclic Hochschild complex with two inputs and two outputs canonically corresponds (via the pairing on $A\oplus A^*$) to the maps $D: A\times A \to A\times A $ and $M: A\times A^*\times A \to A$. This correspondence is defined however only  up to an arbitrary permutation of terms $A$. To establish an isomorphism between the two structures, we then need to choose appropriately this correspondence, which is done by formula (*) in \cite{IK}.
After that axioms of double Poisson bracket can be checked, taking into account signs. Moreover we  ensure that no other axioms appear from the Maurer-Cartan equation in case of the structure $(A\oplus A^*, m_2+m_3)$, hence all double Poisson brackets can be obtained from these structures, that is the map defined by (*) is a surjection. This means that structures of mentioned type are indeed in a bijective correspondence with the double Poisson brackets.

The question on better understanding of the structure
needed to produce Poisson bracket on representation spaces recently received  much attention. It is discussed, apart from mentioned paper \cite{IKihesP}, for example, in \cite{Art, VayVay}.
Our work \cite{IK}, \cite{IKihesP} was also extended to the case when $A\neq A_0$, by means of the same correspondence given by formula (*), with signs added in \cite{H}.



\section{Small subcomplex of the higher cyclic Hochshild complex}\label{sH}

Here we work with the  smooth algebras \cite{KR}, which are finitely generated 2-formally smooth algebras.  By {\it 2-formally smooth} algebra we mean a formally smooth algebra in the sense of
 J. Cuntz and D. Quillen \cite{CQ}:

 \begin{definition}
An algebra A is 2-formally smooth (=quasi-free)
if and only if it satisfies one of the following equivalent properties:

(1) (Lifting property for nilpotent extensions) for any algebra $B$, a two-sided nilpotent
ideal $I \in B (I = BIB, I^n = 0$ for $n \gg 0)$, and for any algebra homomorphism
$ f: A \to B/I$, there exists an algebra homomorphism $\tilde f: A\to B$ such that $f = pr_{B \to B/I} \circ \tilde f$ is a natural projection.

(2) $Ext^2_{A-mod-A}(A,M)=0$
 for any bimodule $M \in A-mod-A$.

(3) The A-bimodule
$\Omega_A^1 = {\rm Ker} (m_A:  A \oo A \to A)$ is projective.

\end{definition}


First, consider a subcomplex $\zeta$ of the higher cyclic  Hochschild complex, which we define as follows. Take a quotient complex
$$
R_{\rm min}=[0\to \Omega\to A\otimes A]\twoheadrightarrow A
$$
of the bar complex (considered as a complex of $A$-bimodules). Namely, $R_{\rm min}={\cal B}/{\cal F}$, where ${\cal B}$ is
the bar complex
$$
{\cal B}=[\dots A\otimes A\otimes A\mathop{\to}\limits^{D_3} A\otimes A]\twoheadrightarrow A.
$$
Denote its usual differential by $D_A$ or just $D$, when it is clear that we are talking about complex of $A$-bimodules.
 Obviously, $\cB$ as well as $\cR_{min}$ provide a free $A$-bimodule resolution for the diagonal bimodule $A$.

  Let $\cal F$ be the subcomplex generated by $A^{\otimes k}$ with $k\geq 4$ and $\ker D_3$, i.e. ${\cal F }=\oplus A^{\otimes \geq 4}\oplus {\rm Ker} D_3$. Note that $\Omega=A^{\otimes 3}/\ker D_3$
is isomorphic to the kernel of the multiplication map $\mu:A\otimes A\to A$. We
equip $R_{\rm min}$ with the grading for which $R_{-1}=\Omega$ and $R_0=A\otimes A$. Thus  we have a resolution
 $R_{\rm min}\in{\rm Compl}(A^{\rm e}-mod)$ of a diagonal bimodule $A$.

Then  consider $N$th tensor power of $R_{\rm min}$:
$$
R_{\rm min}^{\otimes N}\in{\rm Compl}((A^{\rm e})^{\otimes N}-{\rm mod}),
$$
and dualise it by taking  Hom to an $A^{\oo N}$-bimodule $A^{\oo N}_{cycl}$ with the defined above structure:
$$
{\rm Hom}_{(A^{\rm e})^{\otimes N}}(R_{\rm min}^{\otimes N},A^{\oo N}_{{\rm cycl}})=:\zeta^{(N)}.
$$

For $N=1$ applying the functor ${\rm Hom}_{A^{\rm e}}(-,A)$ to $R_{\rm min} \in {\rm Compl}(A^e-{\rm mod})$ we get a subcomplex $\zeta = {\rm Ann} ({\cal F})={\rm Hom}_{A^{\rm e}}(R_{min},A)$ of the usual Hochschild complex
$C^{\bullet}(A,A)=C^{(1)}(A)=
{\rm Hom}_{A^{\rm e}}({\cal B},A)$:
$$
C^{\bullet}(A,A) \supset  {\rm Hom}_{A^{\rm e}}(R_{\rm min},A),
$$
where
$$
{\zeta}={\rm Ann}({\cal F})=\{\Phi\in C^\bullet(A,A)[1]:\Phi(h)=0\ \text{for}\ h\in{\cal F}\}.
$$
Thus

$\zeta={\rm Ann}({\cal F})=\{\text{chains in}\ {\rm Hom}_{A^e}({\cal B},A),\ \text{turning}\ {\cal F}\ \text{into $0$}\}$

$=\{\Phi(a_1\otimes\dots\otimes a_n)$, s.t.
$\Phi(a_1\otimes\dots\otimes a_n)=0$, $n>3$ and

$\Phi(a_1\otimes a_2\otimes a_3)=0$ iff $a_1\otimes a_2\otimes a_3\in \ker D_3={\rm Im}D_4$\}.

Thus, $\Phi\in \Hom_{A^{e}}(A^{\otimes 3},A)$ is in $\zeta$ if and only if it is an $A$-bimodule derivation, that is satisfies the Leibnitz rule:
$$
\Phi(a_1\otimes a_2a_3\otimes a_4)=\Phi(a_1\otimes a_2\otimes a_3a_4)-\Phi(a_1a_2\otimes a_3\otimes a_4).
$$

Note that ${ \Hom}_{A^{\rm e}}(A\otimes A,A)$ is naturally isomorphic to $A$, while ${\Hom}_{A^{\rm e}}(\Omega,A)$ is
naturally identified with ${\rm Der}_{A^e}(A^{\otimes 3},A)$, which interprets the complex $\zeta_A=\Hom_{A^e} (R_{min},A)$

$$0\gets \Hom_{A^e}(\Omega,A) \mathop{\gets}^{D_3^*} \Hom_{A^e}(A\otimes A, A) \mathop{\gets}^{D_2^*}\Hom_{A^e}(A,A )$$

 as
$$
0  \gets {\rm Der}_{A^e}(A^{\otimes 3},A) \mathop{\gets}^{D_3^*} A\mathop{\gets}^{D_2^*} \K.
$$
We can pass from ${\rm \Hom}_{A^{\rm e}}$ to $\Hom_{\K}$, and since
$$\Hom_{A^{\rm e}}(A^{\otimes n+2})\simeq \Hom_\K(A^{\otimes n},A), \quad
\Hom_{A^{\rm e}}(A\otimes A,A)\simeq A,\quad
\Hom_{A^{\rm e}}(\Omega, A) \simeq {\rm Der}_{\k}(A)\, \subset \Hom_{\K}(A,A)
$$

we have an isomorphic complex over $\k$:
$$
\zeta_{\k}: \quad 0\gets {\rm Der}_{\k}\,A\mathop{\gets}^{d_3^*} A
$$
where ${\rm Der}_{\k}\,A$ is the space of derivations of $A$ from $\Hom_\K(A,A)$.

Cohomologies of the usual Hochschild cochain complex:
$$
H^{\bullet}(C^{(1)})(A)=Ext_{A-mod-A}^{\bullet}(A,A)
$$
are the Hochschild cohomologies.

 Analogously,  denote cohomologies of $N$th slice the $\chH$ complex $C^{(N)}$ by
$$
H^{\bullet}(C^{(N)}(A))=Ext_{A^{\oo N}-mod-A^{\oo N}}^{\bullet}(A^{\oo N}, A^{\oo N}_{cycl}),
$$
and cohomologies of the whole $\chH$ complex $C^{(\bullet)}$ by $H^{\bullet}C^{(\bullet)}.$

We explain here in more details what we do in the case of arbitrary $N$.
The $\chH$ complex (before taking invariants) is the $N$th tenzor power of the bar complex $\cB^{\oo N}$ dualised by $\Hom_{A^{\oo N}-mod-A^{\oo N}}(-, A^{\oo N}_{cycl}):$
$$ C^{(N)}(A)=\Hom_{A^{\oo N}-mod-A^{\oo N}}(\cB^{\oo N}, A^{\oo N}_{cycl}).$$

The complex $\cR_{min}$ is obviously a quotient complex of $\cB^{\oo N}$. Indeed,  $R_{min}^{\oo N}=({\cal B}/{\cal F})^{\oo N}={\cal B}^{\oo N}/{\cal J}$
where ${\cal J}=\cF \oo \cB^{\oo (N-1)} + \cB \oo \cF \oo \cB^{\oo (N-2)}+...+  \cB^{\oo (N-1)} \oo \cF$.
Here we need to check of course that $\cJ$ is a submodule in $A^{\oo N}$-bimodule $\cB ^{\oo N}.$

To obtain the complex $\zeta^{(N)}$ we take $N$th tensor power of small complex $R_{min}$ and dualize it by ${\rm Hom}_{A^{\oo N}-mod-A^{\oo N}}(-, A^{\oo N}_{cycl})$. Then we take invariants under $\Z_N$ action.

The structure of $A^{\oo N}$  bimodule on ${\cal B}^{\otimes N}$ is natural, on $A^{\oo N}_{cycl}$ as defined above.

Thus $\zeta^{(N)}_A=\Ann \cJ=
\Hom_{A^{\oo N}-mod-A^{\oo N}} (R_{min}^{\oo N}, A^{\oo N}_{cycl})^{\Z_N} \subset
\Hom_{A^{\oo N}-mod-A^{\oo N}} (\cB^{\oo N}, A^{\oo N}_{cycl})^{\Z_N}.$

We can describe this annihilator as
$$\zeta^{(N)}=\Ann \cJ=
\{ \Phi \in \Hom_{A^{\oo N}-mod-A^{\oo N}} (\cB^{\oo N}, A^{\oo N}_{cycl}))^{\Z_N} \,|\, $$

$$\Phi({\cB}^{\oo r} \oo  {\cF} \oo {\cB}^{\oo s})=0, \quad \forall r+s=N-1\}.$$

This means
$\zeta^{(N)}=\Ann \cJ$ formed by those element of vector space
$$
E^{\oo N}=\bigcap\limits_{s+r=N-1} \{ {\cB}^{\oo r} \oo \Ann {\cF} \oo {\cB}^{\oo s}  \},
$$
which are $A^{\oo N}$-bimodule morphisms.

This  leads us to the description of the small subcomplex $\zeta^{(N)}$ of the $\chH$ complex $C^{ (N)}(A)$ in terms of the appropriately chosen basis.

Starting from this place, when we choose a basis, we will deal with a free algebra $A$, in stead of general  smooth algebra.
The obvious free basis in $\Omega_{A-mod-A}$ consists of $dx_i=1\oo x_i- x_i\oo 1$. Denote the basis of free $A$-bimodule $A\oo A$ by $\xi$.
Elements of dual bases in $\Hom_{A-mod-A}(\Omega, A)$ and $\Hom_{A-mod-A}(A\oo A, A)$ denote  by
$\p_i$ and $\xi^*$: $\p_i(dx_j)=\delta_{ij} 1, \, \xi^*(\xi)=1$ respectively.
Corresponding bases of  $\Hom_{\k}(\Omega, A)$ and $\Hom_{\k}(A\oo A, A)$ are  $\{\p_i u, u\in \l X \r\}$ and $\{ \xi u, u \in \l X \r\}$ respectively.


The basis of the complex  $\zeta^{( N)}_{\k}=\Hom_{\k}(R_{\min}^{\oo N}, A^{\oo N}_{cycl})^{\Z_N}$
will consist of cyclic monomials on $\xi^*$, $\p_i$ and $x_i$ which we depict as follows (we will further write just $\xi$ in stead of $\xi^*$).

\phantom0

\vskip1cm

\phantom0\hskip3cm
\includegraphics[scale=0.2]{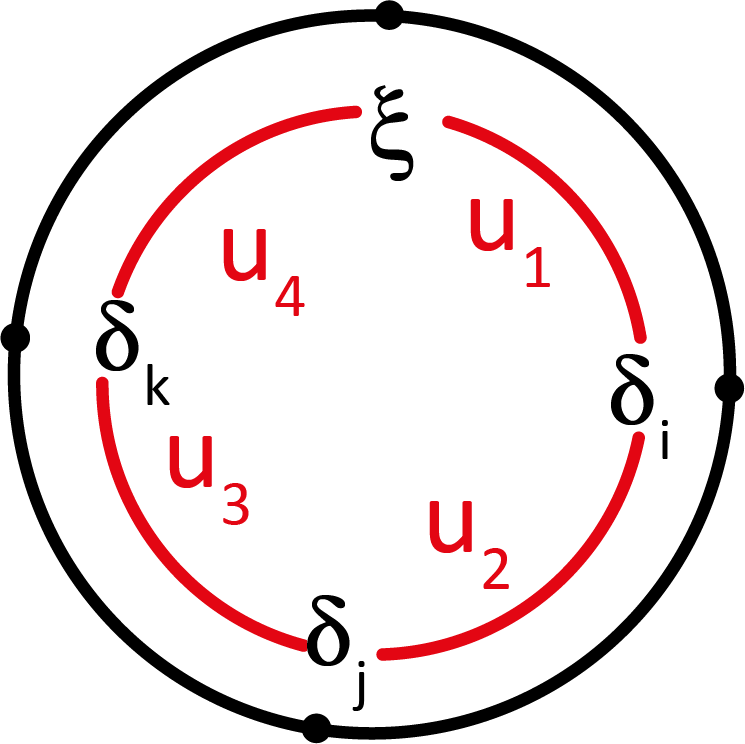}
\hfill

\phantom0

pic.1


Now the important for further considerations point is to embed
the subcomplex $ \zeta^{(N)}$ into the higher cyclic Hochschild complex $C^{(N)}$. We define this embedding  by specifying the
operation, i.e. an element of the higher cyclic Hochschild complex $\Phi_{\w} \in \mathop{\Hom}_\K(\mathop{\oo}\limits_{i=1}^N A^{\otimes n_i},A^{\otimes N})^{\Z_N}$, which corresponds to a given  $\xi\d$-{\it monomial} $\w$.
Here $n=\sum n_i$ is the $\delta$-degree of $\xi\d$-{\it monomial} $w$ and $N$ is its  $\delta,\xi$-degree.

Let $X$-monomials $u_1,\dots,u_n$ be an input  of $\Phi_\w$. The output will be a linear combination of tuples of monomials from
$A$ colored green in the following picture.
All circles and arcs in the picture are oriented clockwise, so  one can read outputs
following the orientation of the circles.
The sum in the  linear combination is over all 'gluings' of variables $x_i$ from the input monomials (black) with
$\delta_i$ (with the same index $i$) in the $\xi\delta$-monomial $w$.


The $\xi\delta$-monomial depicted below represent an operation $\Phi: A^{\oo 3} \to A^{\oo 5},$ (or more precisely:
$A\oo A^0 \oo A \oo A \oo A^0 \to A^{\oo 5}$) from the $\chH$ complex $C^{(N)}$ for $N=5$.

\vskip1cm

\phantom0\hskip4cm
\includegraphics[scale=0.06]{Modpicop.png}
\hfill

\phantom0

pic.2


In terms of the above $\xi\p$-basis we now describe differentials $D^*_A=D^*$  and $D^*_{\k}=d^*$ in dualized complexes.

Let us spell out first the usual differential $D_A$ on one copy of the bar complex $\cB$:
$$D_A(u_1\oo ...\oo u_n)=u_1u_2\oo...\oo u_n-u_1\oo u_2u_3\oo...\oo u_n+...+(-1)^{n} u_1\oo...\oo  u_{n-1}u_n.$$

After we dualise this complex by $\Hom_{A-mod-A}(-,A)$,
we get a usual dual differential $D^*_A$, $D^*f(u)=f(Du)$ on the Hochschild complex :
$$(D^*_A f)(u_1\oo ...\oo u_{n+1})=f(u_1u_2\oo...\oo u_{n+1})-f(u_1\oo u_2u_3\oo...\oo u_{n+1})+...+(-1)^{n+1} f(u_1\oo...\oo u_n u_{n+1}),$$
where $f\in \Hom_{A-mod-A}(\cB,A).$

When we pass to $\Hom_{\k}(\cB, A)$, since $f$ is an $A$-bimodule morphism, an element $h \in \Hom_{\k}(\cB, A)$ is defined by
$h(v_1\oo ...\oo v_n)=f(1\oo v_1 \oo...\oo v_n\oo 1),$
and
$$(D^*_{\k} h)(v_1\oo ...\oo v_{n-1}) = v_1h(v_2\oo...\oo v_{n-1})-h(v_1v_2\oo v_2...\oo v_{n-1})+...$$

$$(-1)^{n-2} h(v_1\oo...\oo v_{n-2}v_{n-1})+(-1)^{n-1}  h(v_1\oo...\oo v_{n-2})v_{n-1},$$
where $h \in \Hom_{\k}(\cB, A).$

Doing the same for the
 tensor product of $N$ copies of the bar complex $\cB^{\oo N}$ and dualising it by
 $\Hom_{A^{\oo N}-mod-A^{\oo N}}(-,A^{\oo N}_{cycl})$, we obtain the expression for the differential in the higher cyclic Hochschild complex:
$$D^*h(v_1,...,v_N)= \sum\limits_{\alpha=1}^n (-1)^{s_1+...+s_{\alpha-1}} D^{*\alpha} h(v_1,...,v_N),$$
where $v_{\alpha}=x_1^{\a}...x_{s_{\a}}^{\a}\in A^{\oo s_{\alpha}}\subset \B$, and

$$D^{*\alpha}_{\K} h(v_1,...,v_N)\sum\limits_{j=1}^{s_{\alpha}-1} (-1)^j
h(v_1\oo ...\oo v_{\a-1} \oo x_1^{\a}\oo ... \oo x_j^{\a}x_{j+1}^{\a} \oo ...\oo   x_{s_{\a}}^{\a} \oo v_{\a+1}\oo ...\oo v_N) + $$

$$(1\oo...\oo 1\oo x_1^{\a}\oo...\oo 1)\bullet h(v_1\oo ...\oo v_{\a-1} \oo x_2^{\a}\oo ...\oo   x_{s_{\a}}^{\a} \oo v_{\a+1}\oo ...\oo v_N) + $$

$$(-1)^{s_{\a}} h(v_1\oo ...\oo v_{\a-1} \oo x_1^{\a}\oo ...\oo   x_{s_{\a-1}}^{\a} \oo v_{\a+1}\oo ...\oo v_N) \bullet
(1\oo...\oo 1\oo x_{s_{\a}}^{\a}\oo...\oo 1). $$

Here the  element $(1\oo...\oo 1\oo x_1^{\a}\oo...\oo 1) $ has $x_1^{\a}$ in the place $\a$, the element ($1\oo...\oo 1\oo x_{s_{\a}}^{\a} \oo...\oo 1)$ has $x^{\a}_{s_{\a}}$ in place $\a+1 ({\rm mod} N)$, and $\bullet$ stays for the multiplication in $A^{\oo N}$-bimodule  $A^{\oo N}_{cycl}$, as described in section \ref{hH}.



We seen tat $R_{min}^{\oo N}=\cB^{\oo N}/{\cal J}$, thus $\zeta=\Hom(R_{min}^{\oo N}, A^{\oo N}_{cycl})$   is a subcomplex of the $\chH$ complex
$\Hom(\cB^{\oo N}, A^{\oo N}_{cycl}).$  More precisely, in both complexex we should take $\Z_N$-invariant elements.


The subcomplex $\zeta $ is quasi-isomorphic to the $\chH$ complex since $A$ is smooth.

Remind that we denoted
$H^{\bullet}C^{(N)}(A)=Ext_{A^{\oo N}-mod-A^{\oo N}} (A^{\oo N}, A^{\oo N}_{cycl})$
homologies of $N$th slice of $\chH$ complex $C^{(N)}(A)$.

\begin{proposition} Let $A$ be a
smooth algebra (in particular, finitely generated free associative algebra). Then
$H^{\bullet}C^{(N)}(A,A)=H^{\bullet}\zeta^{(N)}$.
\end{proposition}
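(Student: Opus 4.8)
The plan is to identify both $H^{\bullet}C^{(N)}(A)$ and $H^{\bullet}\zeta^{(N)}$ with one and the same $\mathrm{Ext}$-group, computed once from the bar resolution and once from the small resolution $R_{\rm min}$, and then to descend to $\Z_N$-invariants; smoothness of $A$ is used exactly once, to guarantee that $R_{\rm min}$ is a projective resolution.

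Before taking invariants one has $C^{(N)}(A)=\Hom_{(A^{e})^{\otimes N}}(\cB^{\otimes N},A^{\otimes N}_{cycl})$ and $\zeta^{(N)}=\Hom_{(A^{e})^{\otimes N}}(R_{\rm min}^{\otimes N},A^{\otimes N}_{cycl})$, and the inclusion $\zeta^{(N)}\hookrightarrow C^{(N)}(A)$ is the restriction of cochains along the surjection of complexes $\pi^{\otimes N}\colon\cB^{\otimes N}\twoheadrightarrow R_{\rm min}^{\otimes N}$ induced by $\pi\colon\cB\twoheadrightarrow R_{\rm min}=\cB/\cF$. Here one first checks, as already indicated above, that $\cJ=\sum_{i=1}^{N}\cB^{\otimes(i-1)}\otimes\cF\otimes\cB^{\otimes(N-i)}$ is a subcomplex of $(A^{e})^{\otimes N}$-modules, so that $R_{\rm min}^{\otimes N}=\cB^{\otimes N}/\cJ$ is well defined. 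The whole picture is $\Z_N$-equivariant: $\Z_N$ rotates the tensor factors of $\cB^{\otimes N}$, $R_{\rm min}^{\otimes N}$ and $A^{\otimes N}_{cycl}$ (with the Koszul sign $(-1)^{(d-1)(N-1)}$), $\pi^{\otimes N}$ is equivariant, and $\chH$, $\zeta^{(N)}$ are the corresponding invariant subspaces.

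The key input is smoothness of $A$: since $\Omega=\ker(\mu\colon A\otimes A\to A)$ is a projective $A$-bimodule (Cuntz--Quillen condition (3)), the two-term complex $R_{\rm min}=[0\to\Omega\to A\otimes A]$ is a projective $A^{e}$-resolution of the diagonal bimodule $A$, while $\cB$ is its standard free resolution; hence $\pi$ is a quasi-isomorphism of bounded-above complexes of projective $A^{e}$-modules, and therefore a chain homotopy equivalence. Passing to $N$-fold tensor powers over the field $\K$, both $\cB^{\otimes N}$ and $R_{\rm min}^{\otimes N}$ are complexes of projective $(A^{e})^{\otimes N}=(A^{\otimes N})^{e}$-modules whose homology, by the K\"unneth theorem, is the diagonal bimodule $A^{\otimes N}$ concentrated in degree $0$, and $\pi^{\otimes N}$ is again a homotopy equivalence. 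Applying the functor $\Hom_{(A^{\otimes N})^{e}}(-,A^{\otimes N}_{cycl})$, which sends homotopy equivalences to homotopy equivalences, we obtain a quasi-isomorphism
$$
\Hom_{(A^{\otimes N})^{e}}(\cB^{\otimes N},A^{\otimes N}_{cycl})\xrightarrow{\sim}\Hom_{(A^{\otimes N})^{e}}(R_{\rm min}^{\otimes N},A^{\otimes N}_{cycl}),
$$
so both sides compute $\mathrm{Ext}^{\bullet}_{(A^{\otimes N})^{e}}(A^{\otimes N},A^{\otimes N}_{cycl})$.

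Finally we descend to $\Z_N$-invariants. As $\mathrm{char}\,\K=0$ and $\Z_N$ is finite, the averaging idempotent $\frac1N\sum_{g\in\Z_N}g$ makes $(-)^{\Z_N}$ an exact functor on $\K[\Z_N]$-modules, so it commutes with passage to cohomology; applying it to the $\Z_N$-equivariant quasi-isomorphism just obtained gives $H^{\bullet}\zeta^{(N)}=H^{\bullet}C^{(N)}(A)$, which is the assertion. The one step that genuinely requires care is the resolution bookkeeping for $R_{\rm min}^{\otimes N}$ --- that $\cJ$ is an honest $(A^{\otimes N})^{e}$-subcomplex, and that the external tensor product of the (projective, but in general not free) resolutions $R_{\rm min}$ is still a projective resolution of the diagonal $A^{\otimes N}$-bimodule; everything past that point is formal homological algebra.
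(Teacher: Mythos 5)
Your proof is correct and follows essentially the same route as the paper's (one-line) argument: both $C^{(N)}$ and $\zeta^{(N)}$ arise by applying $\Hom_{(A^{\otimes N})^e}(-,A^{\otimes N}_{cycl})$ to projective resolutions of the diagonal bimodule $A^{\otimes N}$ (the bar resolution power and $R_{\rm min}^{\otimes N}$, projective because $A$ is smooth), so both compute the same $\mathrm{Ext}$. You have merely spelled out the homological bookkeeping — the K\"unneth/homotopy-equivalence step, the check that $\cJ$ is a subcomplex, and the descent to $\Z_N$-invariants via exactness in characteristic zero — that the paper leaves implicit.
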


\begin{proof}
The statement follows from the fact that our algebra is smooth and both complexes comes from projective resolutions of a diagonal bimodule $A^{\oo N}$.

\end{proof}

One of our main goals will be to prove that these homologies are pure, namely concentrated in the diagonal of the defined above bigrading on the $\chH=C^{(N),n}$, and the complex is
$L_{\infty}$-formal. We believe that concrete calculations in \cite{OS} (check of the Jacobi identity, etc.) for noncommutative bivector fields (in sense of \cite{CB}) are easily explained, and perhaps inspired by our results \cite{IKihes} and are in fact performed in our basis of the small complex $\zeta$. The case of noncommutative projective space corresponds to the free path algebra of the Kronecker quiver $\Kc_n$, for which we prove the formality in section \ref{q}.


In the next section we consider the Lie structure on  $C^{(\bullet)}[1]$, described in section \ref{hH}, and prove that the subcomplex $\zeta^{(\bullet)}$ of the complex ${\bf g}=C^{(\bullet)}[1]$ is closed under the Lie bracket in ${\bf g}$. Thus it forms a Lie subalgebra ${\bf g_0} \subset {\bf g}$. Moreover, we show how this bracket is combinatorially described in terms of the basis in $\bf g_0=\zeta^{(\bullet)}$, consisting of $\xi\delta$-monomials.


\section{Lie bracket on $\zeta^{(\bullet)}$}\label{Lie}

We give here a constructive description of the bracket in the small subcomplex $\zeta_{\K}^{(N)}$ of the Hochschield complex in terms of $\xi\delta$ calculus which makes it into a Lie subalgebra of $C_{\K}^{(N)}$.

\begin{theorem}\label{THM1}
I. The above described embedding $\zeta_{\K}\to C^{(\bullet)}(A,A)[1]$ is an embedding of complexes, whose image is
a Lie subalgebra of ${\bf g}=C^{(\bullet)}(A,A)[1]$ equipped with the generalised necklace bracket.

II. Precise combinatorial description of this bracket is given by (*) (pic. 3).
\end{theorem}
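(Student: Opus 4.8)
The plan is to prove both statements simultaneously by computing the generalised necklace composition $\Phi_A \circ \Phi_B$ directly on a pair of $\xi\delta$-monomials $A$ and $B$ and showing that the result lies in the image of $\zeta_{\K}^{(\bullet)}$, with the formula predicted by picture~3. First I would recall from Section~\ref{sH} the precise dictionary between a $\xi\delta$-monomial $\w$ and the operation $\Phi_\w$: the black arcs are the input $X$-monomials, and the output tuple is read off the circle by following the clockwise orientation, where each $\delta_j$ in $\w$ is a ``slot'' that must be filled by an occurrence of the generator $x_j$ taken from one of the inputs, and $\xi$ marks a cut between two consecutive output components. The key structural point is that $\Phi_\w$, as an element of $\Hom_{\K}(\bigotimes A^{\otimes n_i}, A^{\otimes N})$, is \emph{multilinear in the inputs and produces each output letter either as a fixed letter of $\w$ or as a contiguous factor of some input}; this ``locality'' is exactly what makes the subspace spanned by the $\Phi_\w$ closed under the operations of the generalised necklace bracket.

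Next I would unwind Definition~\ref{neckl}: $\Phi_A \circ \Phi_B$ inserts every output of $\Phi_B$ into every input of $\Phi_A$. Since an input of $\Phi_A$ is consumed by $\Phi_A$ only through the $\delta_j$-gluings, inserting an output component of $\Phi_B$ into the $i$-th input slot of $\Phi_A$ and then performing the $\delta$-gluings amounts, combinatorially, to the following: pick a $\delta_j$ in $A$, pick an occurrence of $x_j$ somewhere in $B$ (either a fixed letter of the monomial $B$, or — after $B$ has itself acted — a letter coming through; but at the level of the monomials $A,B$ themselves it is a letter $x_j$ of $B$), glue them, cut the two circles open at the point of gluing, and splice them into a single circle. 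The letters $\delta_j$ and $x_j$ disappear, all other labels $\xi,\delta_\bullet,x_\bullet$ are retained with their cyclic order, and one checks the $\delta$- and $(\delta,\xi)$-degrees add correctly so the result is again a legitimate $\xi\delta$-monomial contributing to the appropriate $C^{(N)}$. Summing over all choices of $\delta_j$ in $A$ and matching $x_j$ in $B$ gives precisely the element $A\circ B$ of picture~3. I would carry this out carefully for one representative insertion, tracking the Koszul signs coming from reordering the shifted degree-one operations and from the $\Z_N$-cyclisation (invoking that the differential, hence also the relevant symmetrisation, commutes with cyclisation as noted before Lemma~\ref{unbar}), and then observe that the remaining cases differ only by bookkeeping.

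For part~I, closure under the bracket is then immediate: $[A,B]_{g.n} = \Phi_A\circ\Phi_B - (-1)^\sigma \Phi_B\circ\Phi_A$ is, by the computation above, the image of $A\circ B - (-1)^\sigma B\circ A$, a $\K$-linear combination of $\xi\delta$-monomials, hence lies in $\zeta_{\K}^{(\bullet)}$; that the embedding is a map of complexes follows from the explicit comparison of $D^*$ on $C^{(\bullet)}$ (spelled out in Section~\ref{sH}) with the induced differential on $\zeta$, which is the standard fact that $\zeta = \Ann(\cJ)$ is a subcomplex because $\cJ$ is a subcomplex of $\cB^{\otimes N}$. The pre-Lie/Jacobi identities are inherited from those of the ambient ${\bf g}$, proved via \cite{G}. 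I expect the \emph{main obstacle} to be the sign bookkeeping: reconciling the Koszul signs of the shifted convention, the sign $(-1)^{(d-1)(N-1)}$ in the $\Z_N$-action, and the signs in $D^*$, so that the clean statement $[A,B]=A\circ B - B\circ A$ holds with the stated $\sigma$ uniformly — and secondarily, verifying that no ``partial gluing'' terms (where an output of $\Phi_B$ supplies only part of a letter, or where a single input of $\Phi_A$ receives outputs from several copies of $\Phi_B$) survive, i.e. that the naive picture-3 description is genuinely exhaustive. This I would handle by the locality observation: each $\delta_j$ demands exactly one letter $x_j$, and distinct $\delta$'s are filled independently, so the composition factors as a sum over matchings with no cross-terms.
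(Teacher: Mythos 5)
Your argument rests on the claim that the composition $\Phi_A \circ \Phi_B$ \emph{by itself} is already described by the gluing picture~3, i.e.\ that $\Phi_A\circ\Phi_B=\Phi_{A\circ B}$ for a linear combination $A\circ B$ of $\xi\delta$-monomials, so that closure under the bracket becomes ``immediate.'' This is precisely what fails, and the paper says so explicitly: the single composition $U\circ W$ \emph{cannot} be expressed by $\xi\delta$-monomials; only the commutator $[U,W]=U\circ W - W\circ U$ can. When you unwind the necklace composition, the $\delta_j$'s of one monomial can be filled not only by the fixed (``red-arc'') letters $x_j$ of the other $\xi\delta$-monomial, but also by letters $x_j$ that merely pass through from the \emph{inputs}. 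Your parenthetical remark (``a letter coming through; but at the level of the monomials $A,B$ themselves it is a letter $x_j$ of $B$'') dismisses this second case incorrectly: a pass-through input letter is not a letter of $B$, and the term it produces is \emph{not} accounted for by the gluing picture. So $\Phi_A\circ\Phi_B$ genuinely contains extra ``non-essential'' terms lying outside the image of $\zeta_\K$.

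The entire content of the paper's proof is the cancellation argument you skip: one partitions the output monomials of $U\circ W$ (and of $W\circ U$) into \emph{essential} ones, obtained by gluing a $\delta_j$ to a fixed letter $x_j$ of the other $\xi\delta$-monomial, and \emph{non-essential} ones, obtained by gluing a $\delta_j$ to an $x_j$ coming from an input, and then one shows that the non-essential contributions of $U\circ W$ and $W\circ U$ coincide term by term (with equal coefficients) and hence cancel in the commutator. Only after this cancellation does the residual sum of essential terms reproduce the picture~3 formula $A\circ B - B\circ A$. The paper verifies this cancellation by a representative computation (the example with $A=\xi u\delta_i v\delta_j w\xi$, $B=\delta_3 y\delta_4 z$ and inputs $w_1,w_2,w_3$) and observes that the general case reduces to eight local configurations near the insertion point. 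Without identifying and proving this cancellation, your ``locality observation'' does not close the argument: it rules out ``partial'' or ``multiple'' gluings, which is a red herring, but it does not rule out the non-essential terms, which are the actual obstruction to $\Phi_A\circ\Phi_B$ landing in $\zeta_\K$.
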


\begin{proof}
To prove this we need to show that the bracket of the Lie algebra
${\bf g}=C^\bullet(A,A)[1]$ applied to
$\xi\delta$-monomials yields a member of $\zeta$, that is, a linear combination of
$\xi\delta$-monomials again. Here we demonstrate how it works.

Let $A$ and $B$ be two $\xi\delta$-monomials. We perform composition of corresponding operations $U\circ W$ from the Hochschild complex according to the necklace bracket rule. We will see that  we can not express the resulting operation $U\circ W$ via $\xi\delta$ monomials, but we can do it for the operation $[U,V]=U\circ W-W\circ U$. Perform first $U\circ W$. This composition of operations from $C^{(\bullet)}(A)$ is realised as application of $\xi\delta$-monomial $A$ to the input (according to the procedure described by pic.2), and then application of $\xi\delta$-monomial $B$ to the output of the first operation. As an output of this composition we will get linear combination of monomials  from $A^{\oo i}.$
We call such a monomial {\it non-essential} if it is obtained as a result of gluing some letter $x_i$ from the input of operation $A$ to some $\d_i.$ Gluing  letters $x_j$ from the  operation A itself (red arcs in pic.2) to the $\d_j $ will result in obtaining   {\it essential} monomials in the output of composition $U\circ W$.
Note that the copy of the same monomial present in composition $U\circ W$ can be essential or not, depending on how it is obtained. Thus to be essential is not a property of the monomial, but it just characterises the way it got into the output of the composition of these operations.

We claim that non-essential output monomials for the operations $U\circ W$ and $W\circ U$ will be the same (with the same coefficients) and therefore they will cancel out in the bracket $[U,W].$ This bracket is
formed only by  essential outputs meaning exactly that it is described by the  operation on $\xi\delta$-monomials, described in picture 3. Namely, letters $x_i$ from the monomial $A$ (red in pic. 3) are getting inserted in $\delta_i$ of the second $\xi\delta$ word $B$ (and the other way around for $W\circ U$). Insertions of this kind alone define $[A,B]$.

\vskip1cm

\phantom0\hskip4cm
\includegraphics[scale=0.2]{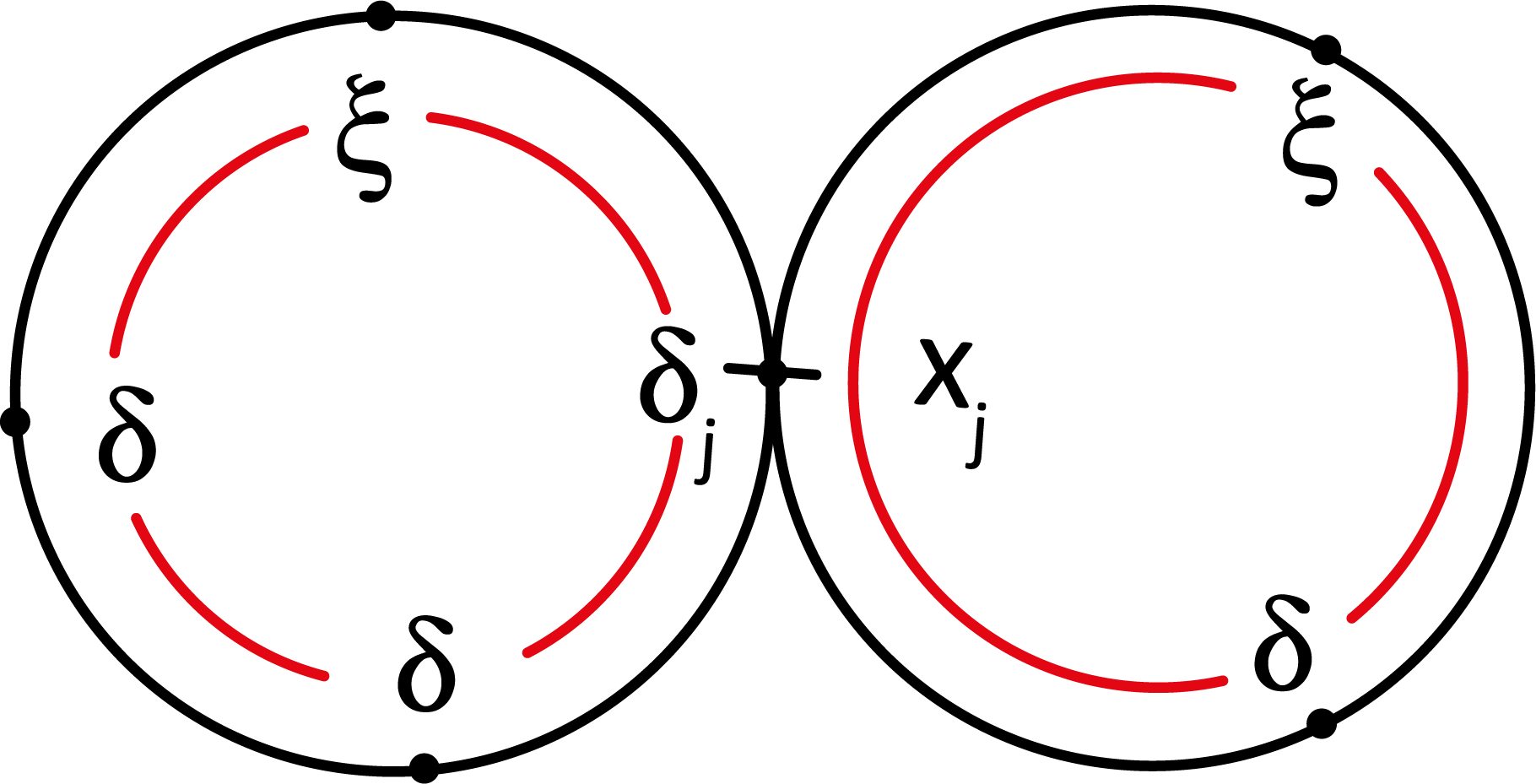}
\hfill

\phantom0

\vskip1cm

\phantom0\hskip4cm
\includegraphics[scale=0.2]{Npic3.png}
\hfill

\phantom0

pic.3



The complete proof of this claim consists of consideration of 8 cases depending on which combinations of $\xi$ and $\delta_j$ surrounds the place of insertion as well as whether the two letters of the input involved in the two compositions come from the same input or from different ones.
\end{proof}

We illustrate he proof by the following example.

{\bf Example}

Consider the following two operations:

\vskip1cm

\phantom0\hskip3cm
\includegraphics[scale=0.05]{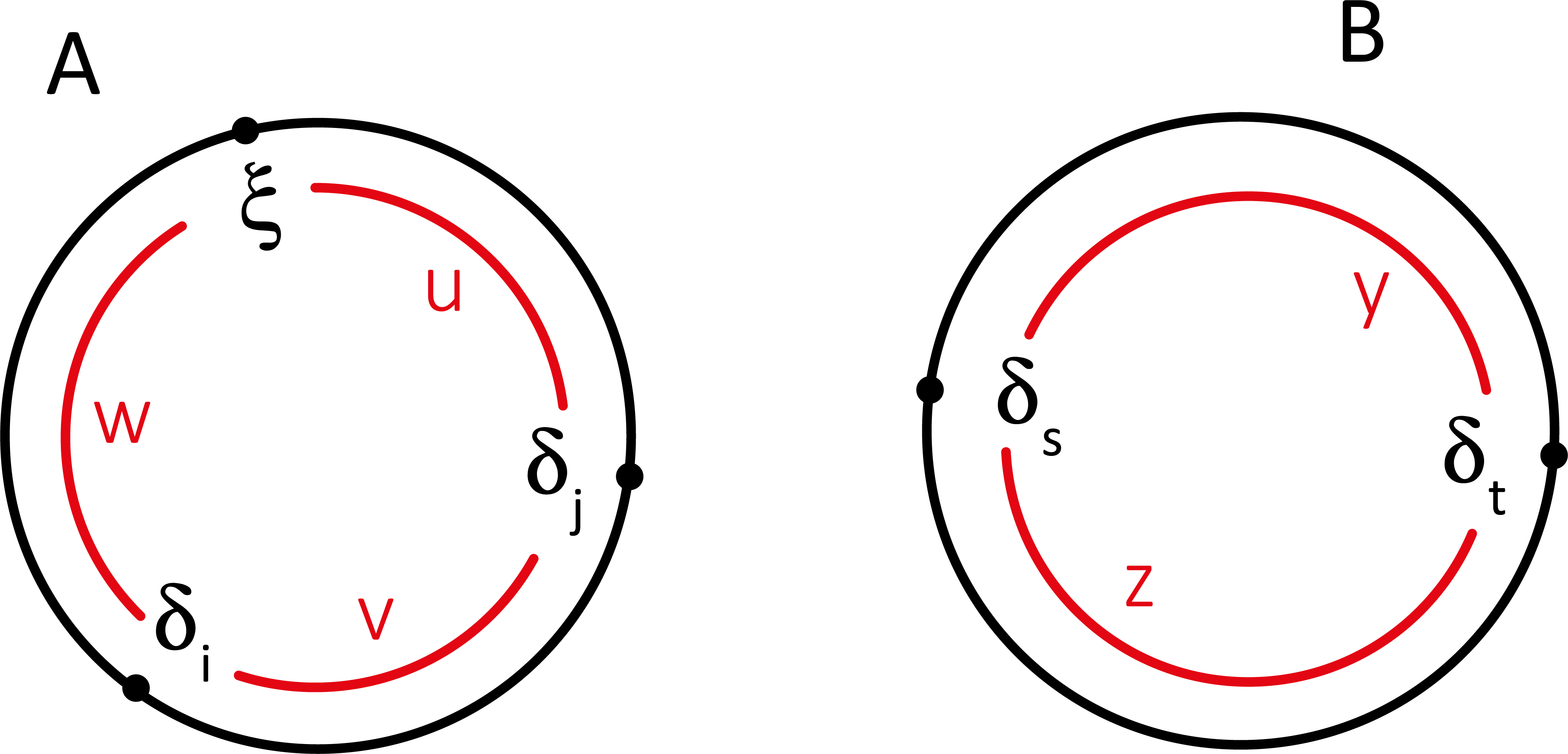}
\hfill

\phantom0

pic.4


Namely, let $A=\xi u\d_iv\d_jw\xi$ and $B=\d_3y\d_4z$. As an  input consider  three monomials $w_1$, $w_2$ and $w_3$ of the form
$$
w_1=ax_ib,\ \ \ w_2=cx_jdx_te,\ \ \ w_3=fx_sg.
$$

We fixed certain points in them, to construct coupling monomials which will cancel in $[U,W]$.
Consider first operation corresponding to a $\xi\delta$ monomial $A$. After inserting $x_i$ of $w_1$ and $x_j$ of $w_2$ into $\delta_i$ and $\delta_j$ of $A$, we get three outputs $aw$, $udx_te$ and $cvb$.
After inserting  $x_t$ from one of those  outputs $udx_te$, and $x_s$ of $w_3$  into the operation corresponding to $B$, we get four outputs for the composition $aw$, $cvb$, $udzg$ and $fye$. These are non-essential monomials since we started from letter $x_i$ in the
input $w_1$ of operation $U$, not from the internal letter in the $\xi\d$-presentation of operation $U$.

\vskip1cm

\phantom0\hskip4cm
\includegraphics[scale=0.25]{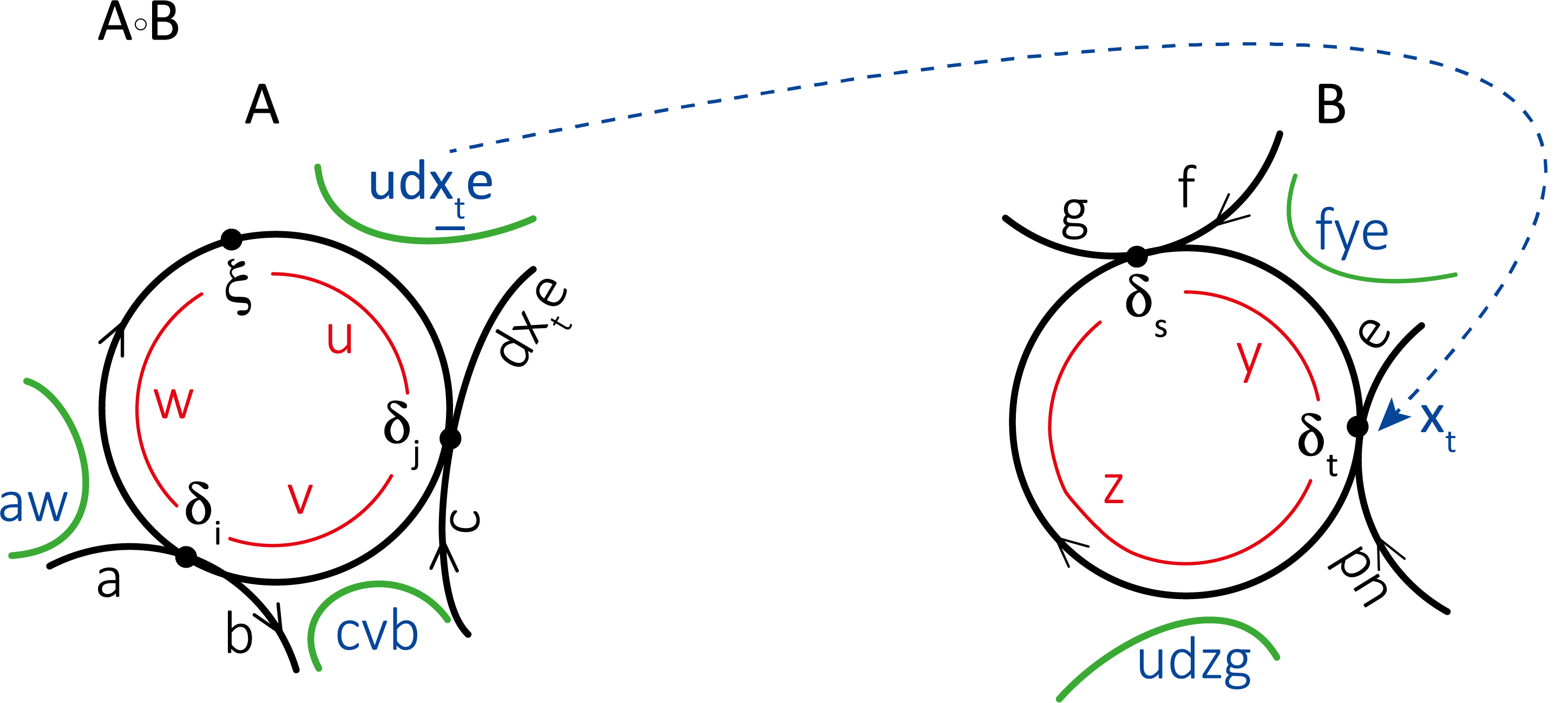}
\hfill

\phantom0

pic.5


Next, consider composition of operations $W\circ U$. If we insert $x_s$ of $w_3$ and $x_t$ of $w_2$ into the operation corresponding to $B$, we get two outputs $fye$ and $ucx_jdzg$. Now inserting $x_i$ of $w_1$, and $x_j$ of the second output into the operation defined by $A$, we get the same four outputs for the
composition: $aw$, $cvb$, $udzg$ and $fye$.

\vskip1cm

\phantom0\hskip4cm
\includegraphics[scale=0.06]{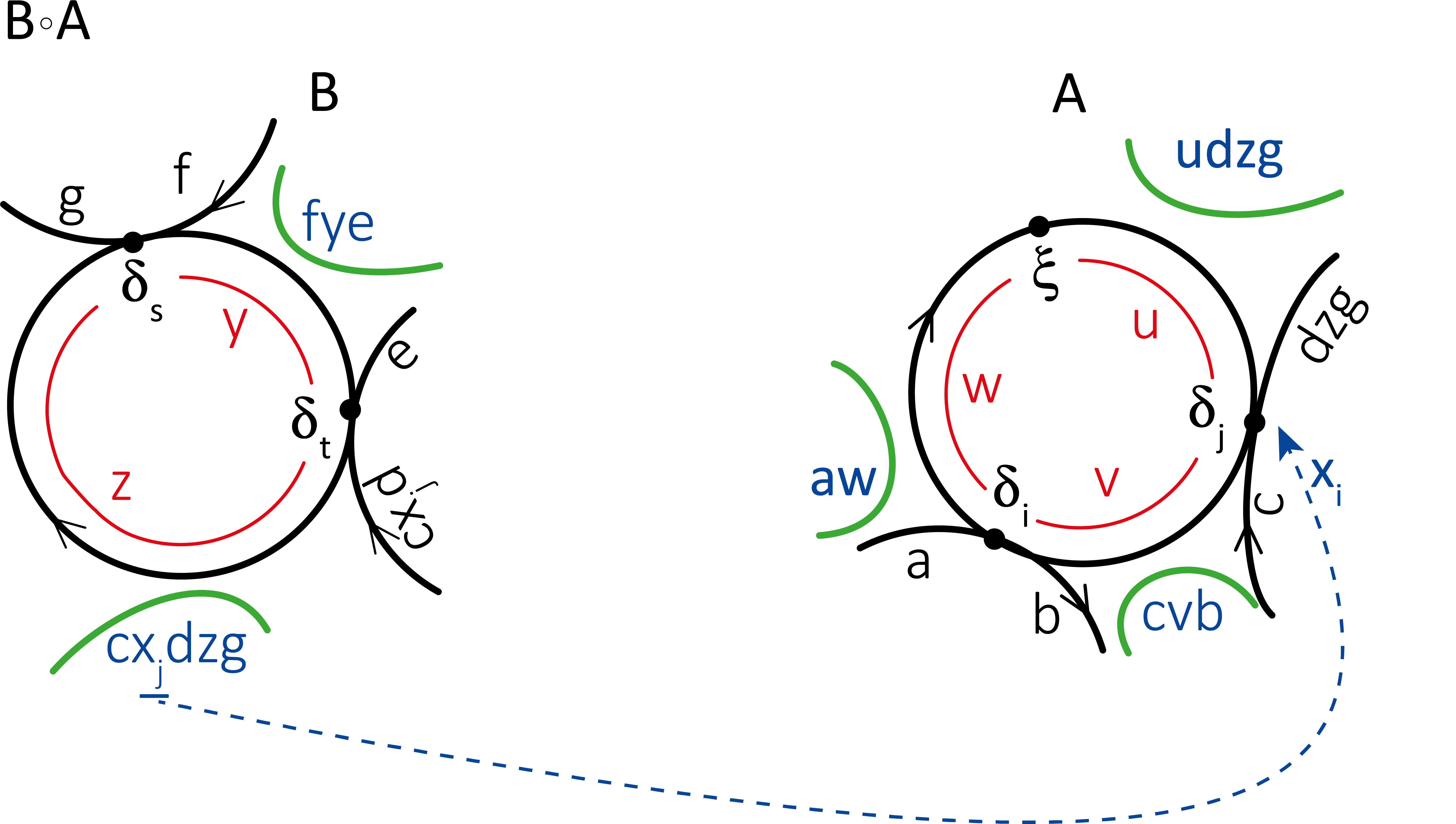}
\hfill

\phantom0

pic.6


 This example demonstrate that compositions in different order exhibit the same non-essential output monomials. As a matter of fact, the same argument works for every non-essential output monomial, so we can see that all non-essential outputs cancel in $[U,W]=U\circ W-W\circ U$.

\section{Homological purity and formality}\label{hp}

\subsection{Homological purity of the higher Hochschild complex}

The goal of this section will be to prove homological purity of the
complex $\zeta=\zeta^{(N)}_{\k}=\Hom_{\K}(R_{min}^{\oo N}, A_{cycl}^{\oo N})^{\Z_N}$ considered in previous sections.
It is a subcomplex of a complex $\tilde\zeta=\Hom_{\K}(R_{min}^{\oo N}, A_{cycl}^{\oo N}) $, which is the version
 where we do not take invariants under $\Z_N$-action, or in other words
 operations do have a fixed point. That is we have
$$
 \begin{array}{ccc}
 \zeta&\subset&{\tilde\zeta}\\
 \cap&&\cap\\
 \chH&\subset&\hH.\\
 \end{array}
 $$

 The underlying set of this complex  can be described as
$ \tilde\zeta=\{$monomials  $u\in \k\l\xi, x_i, \p_i \r$, $i=\bar{1,r},$ starting from $\xi$ or $\p_i \}_{\k}$. Whereas the underlying set of $\zeta$, consists of certain linear combinations of arbitrary monomials $u\in \K\l\xi,x_i,\delta_i\r$.

It will be instrumental in the proof of purity for $\zeta$.
Our proof, which is using Gr\"obner bases techniques in the ideals of free algebras, namely, for an ideal  generated  by one element associated to a differential, can be considered in a way as a construction of corresponding homotopy map.

We naturally have a bigrading on $\zeta=\oplus \zeta^k_m$: the grading  by $\p$-degree, and  by degree with respect to  $\xi$ and  $\p_i$th, $i=\bar{1,r}$, which we  call {\it weight}. That is, $u\in \zeta^k_m$, if $ \deg_{\xi, \p} u=:w(u)=m,$ and $ \deg_{\p} u=:g(u)=k$.
Essential for our considerations will be the  {\it cohomological grading} by $\xi$-degree:
$\zeta = \oplus \zeta(l)$, where $\zeta(l)=\mathop{\oplus}\limits_{m-k=l} \zeta^k_m.$

$$
\begin{array}{ccccccccc}
&&\delta&&\delta\delta&&\delta\delta\delta&&\delta\delta\delta\delta\\
&\nearrow&|&\nearrow&|&\nearrow&&\nearrow&\\
\xi&&\xi\delta&&\xi\delta\delta&&\xi\delta\delta\delta&&\\
&\nearrow&|&\nearrow&|&\nearrow&&&\\
\xi\xi&&\xi\xi\delta&&\xi\xi\delta\delta&&&&\\
&\nearrow&|&\nearrow&|&&&&\\
\xi\xi\xi&&\xi\xi\xi\delta&&&&&&\\
&\nearrow&|&&&&&&\\
\xi\xi\xi\xi&&&&&&&&
\end{array}
$$

\bigskip

If $\zeta_m$ is a subcomplex of $\zeta$, namely a slice consisting of elements of weight $m$, then we will use also splitting $\zeta_m=\mathop{\oplus}\limits_s \zeta_{m,s}$, where $s$ is an $x$-degree: $u\in \zeta_{m,s},$ if $w(u)=m,$ and $\deg_{x_1,...,x_r}(u)=s.$

The differential inherited from the complex $\Hom_{A^{\oo N}-mod-A^{\oo N}}(\oo_{i=1}^N A^{r_i}, A^{\oo N}_{cycl}), $
explained in  section~\ref{sH}, in terms of $\xi\d$-monomials boils down to the  following differential on $\tilde\zeta_{\K}$ (and on $\zeta_{\K}$):

$$
d(u_1\xi u_2\xi\dots u_n)=\sum(-1)^{g(u_1\xi u_2 \xi ... u_i)} u_1\xi u_2\xi\dots u_i\Delta u_{i+1}\dots u_n
$$
if $u_1\neq\varnothing$ ($u_1$ starting with $\delta_i$), here $\Delta=\sum\limits_{i=1}^r\delta_i x_i-x_i\delta_i$,
and
$$
d(\xi u_1\xi u_2\xi\dots u_n)=\xi d(u_1\xi u_2\xi\dots u_n)+\sum_{i=1}^r [\delta_i x_i u_1\xi u_2\xi\dots u_n-\delta_i u_1\xi u_2\xi\dots u_n x_i]
$$
if the monomial starts with $\xi.$ Here $u_i\in \K\l x_i, \delta_i\r.$

\begin{theorem}\label{main}
The homology of the complex $\zeta(A)=(\zeta, d)$, $\zeta=\oplus \zeta^k_m$, for $A=\k\l x_1,...,x_r\r, r\geq 2$ is sitting in the diagonal $k=m$,
 consequently, the complex $\zeta=\oplus \zeta(l)$, $\zeta(l)=\mathop{\oplus}\limits_{m-k=l} \zeta^k_m$  is pure, that is the homology is sitting only in the last place of this complex
 with respect to cohomological grading by $\xi$-degree. Thus the $\chH$ complex is pure as well.
\end{theorem}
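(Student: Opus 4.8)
The plan is to set up a careful inductive scheme on the weight $m$, using the internal splitting $\zeta_m = \bigoplus_s \zeta_{m,s}$ by $x$-degree and the bigrading $(k,m)$ to organize the computation. The statement we want is that $H^k(\zeta_m) = 0$ unless $k=m$, i.e. unless the monomial consists only of $\delta$'s (no $\xi$). Equivalently, in cohomological degree $l = m-k > 0$ the complex is acyclic. Since the differential $d$ preserves weight $m$ (adding one $\delta x_i - x_i\delta_i$ piece increases $\delta$-degree by one but that is tracked by $k$, not $m$... wait — $\Delta$ has weight $2$ in $x,\delta$ but $\deg_\delta$ one; let me re-examine: actually $d$ increases $k$ by $1$ and leaves $m$ unchanged only if $\xi$-degree drops — in the $\xi$-leading case the term $\delta_i x_i u_1\xi\cdots$ replaces a $\xi$ by a $\delta_i$ and an $x_i$, so weight $m$ is preserved while $k\mapsto k+1$). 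Thus each weight-$m$ slice $\zeta_m$ is a finite complex, and purity for $\zeta$ follows slice by slice.

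The core idea is to interpret $d$ on $\zeta_m$ as (left) multiplication-type operator in the free algebra $\k\langle \xi, x_i,\delta_i\rangle$ by the element $\Delta = \sum_i(\delta_i x_i - x_i\delta_i)$ (together with the correction terms in the $\xi$-leading case), and to invoke Gröbner basis theory for the one-sided/two-sided ideal generated by $\Delta$. Concretely: fix a monomial order (e.g. degree-lexicographic with $\delta_i > x_i > \xi$ or a variant adapted to the "starting letter" constraint), compute the leading term of $\Delta$ — it will be (up to sign) one of the $\delta_i x_i$ or $x_i \delta_i$ depending on the order — and show the set $\{\Delta\}$ is essentially already a Gröbner basis of the relevant ideal (no nontrivial overlap/S-polynomials survive, because $\Delta$ is a single homogeneous element and overlaps of $\delta_i x_i$ with itself reduce to zero by the telescoping structure of $\Delta$). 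The normal forms modulo this ideal then give an explicit $\k$-basis of the homology, and one checks these normal-form monomials are exactly the pure-$\delta$ monomials, which live in $k=m$. The fact that $r\geq 2$ enters precisely here: with a single generator the element $\delta_1 x_1 - x_1\delta_1$ behaves like a commutator and the ideal structure degenerates (the quotient is too large / the overlaps don't resolve), which is why the one-loop quiver is excluded.

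To make this rigorous I would: (1) reduce from $\tilde\zeta$ to $\zeta$ and back via Lemma \ref{unbar}, so that it suffices to prove acyclicity in $l>0$ of the non-invariant complex $\tilde\zeta_m$ — this is cleaner since $\tilde\zeta_m$ has a genuine monomial basis; (2) write $\tilde\zeta_m$, for each $m$, as a mapping-cone / filtered object built from copies of the "multiplication by $\Delta$" complex $\cdots \to \k\langle x,\delta\rangle \xrightarrow{\cdot\Delta} \k\langle x,\delta\rangle \to \cdots$, using the $\xi$-separators to split a monomial $u_1\xi u_2\xi\cdots u_n$ into blocks $u_i$ and reading $d$ as inserting $\Delta$ into one block at a time with the sign $(-1)^{g(\cdots)}$; (3) show this "multiplication by $\Delta$" complex on the free algebra is acyclic except at the top, by exhibiting a contracting homotopy coming from Gröbner-basis division by $\Delta$ — given a monomial $w$ divisible (in the leading-term sense) by $\delta_i x_i$, peel off the leftmost such occurrence and use $\delta_i x_i = x_i\delta_i - (\text{rest of }\Delta)$ to reduce; iterate; (4) assemble the block-wise homotopies into a homotopy for the whole of $\tilde\zeta_m$ via a spectral sequence / exact-couple argument, or by an explicit induction on the number $n$ of $\xi$'s and the total degree, handling the two cases ($u_1 = \varnothing$ vs. $u_1 \neq \varnothing$, i.e. $\xi$-leading vs. $\delta$-leading) of the formula for $d$ separately; (5) track that the extra terms in the $\xi$-leading formula, $\sum_i[\delta_i x_i u_1\xi\cdots - \delta_i u_1\xi\cdots x_i]$, fit the same pattern (they are again "$\Delta$-type" insertions at the seam), so they do not spoil the homotopy. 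Finally, transfer acyclicity from $\tilde\zeta$ to $\zeta$ by a standard averaging/transfer argument over $\Z_N$ (characteristic zero), and conclude purity for $\chH$ via the quasi-isomorphism $\zeta^{(\bullet)} \simeq C^{(\bullet)}$ established in the previous section.

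The main obstacle I anticipate is step (4): correctly bookkeeping the signs $(-1)^{g(u_1\xi u_2\xi\cdots u_i)}$ while combining the per-block homotopies into a single global homotopy, and verifying that the naive "homotopy on one block, extended by the identity elsewhere" actually squares up to a chain homotopy for the full $d$ (the cross-terms between different blocks must cancel). A filtration by "position of the first block not in normal form" should control this, but the sign analysis and the interaction of the $\xi$-leading correction terms with the homotopy is exactly the delicate point — this is presumably where the paper's detailed case analysis (the "8 cases" alluded to earlier, or an analogue) does its work. A secondary subtlety is choosing the monomial order so that the starting-letter constraint (monomials must begin with $\xi$ or some $\delta_i$) is compatible with Gröbner reduction, so that normal forms stay inside $\zeta$.
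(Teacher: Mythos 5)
Your overall strategy is aligned with the paper's: reduce from $\zeta$ to the non-invariant complex $\tilde\zeta$ via cyclization (Lemma~\ref{unbar}), treat the $\xi$-leading and $\delta$-leading cases of the differential separately, and use Gr\"obner bases theory for the ideal generated by $\Delta=\sum(\delta_i x_i - x_i\delta_i)$ in the free algebra. The observation that $\delta_1 x_1$ is the leading term and produces no overlap ambiguities, so that by the Diamond lemma the syzygy module is generated by trivial syzygies, is exactly the mechanism used in the paper's Lemma~\ref{D1xi}.

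However, there is a genuine gap in your plan, and it is precisely the point you tucked into ``step (5).'' The correction terms $\sum_i[\delta_i x_i u_1\xi\cdots - \delta_i u_1\xi\cdots x_i]$ in the $\xi$-leading case do \emph{not} simply ``fit the same pattern'': the paper introduces a separate auxiliary complex $(\widehat\zeta, d_{\widehat\zeta})$ with \emph{only} the $\Delta$-insertion part of the differential, proves purity for it by induction (Theorem~\ref{D}), and then proves the result for the full complex $(\tilde\zeta,d)$ in Theorem~\ref{bar} by a different argument: after reducing modulo $\rm{Im}\,d$, one arrives at the equation $\sum\delta_i[x_i,u_0]=0$ in $B=\k\langle x,\delta\rangle/\rm{Id}(\Delta)$, and one needs the Centralizer Lemma (Lemma~\ref{cen}) --- that the centralizer of all $x_i$ in $B$ is $\k$ --- to conclude $u_0$ is a constant. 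Your proposal has no analogue of this step at all, and without it the reduction from the $\xi$-leading form to the $\widehat\zeta$-situation does not go through.

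This also exposes an error in your diagnosis of where $r\geq 2$ enters. You claim the Gr\"obner computation itself ``degenerates'' or ``the overlaps don't resolve'' for $r=1$; this is not the case --- the leading term $\delta_1 x_1$ has no self-overlaps regardless of $r$, and $\{\Delta\}$ is a Gr\"obner basis for all $r\geq 1$. The hypothesis $r\geq 2$ is used exclusively in the Centralizer Lemma: in $\k\langle x_1,\delta_1\rangle/\rm{Id}(\delta_1 x_1-x_1\delta_1)$ the centralizer of $x_1$ is $\k[x_1]$, not $\k$, so the argument fails for the one-loop quiver; whereas for $r\geq 2$ the intersection $\bigcap_j \k[x_j]=\k$ rescues it. A secondary remark: your plan to prove acyclicity of the $\widehat\zeta$-piece by assembling per-block contracting homotopies via a spectral sequence is plausible in principle, but it is exactly the step you yourself flag as unresolved; the paper avoids the sign-bookkeeping entirely by a clean induction on the weight $m$, peeling off the leading $\xi$ one position at a time, with Lemma~\ref{D1xi} as the anchor. (Also note that the ``8 cases'' you allude to belong to the proof of Theorem~\ref{THM1} about the Lie bracket, not to this purity argument.)
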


\begin{proof} To prove this, we consider related complex $\widehat{\zeta}$
with the following differential:
$$
d_{\widehat\zeta}(u_1\xi u_2\xi\dots u_n)=\sum(-1)^{g(u_1\xi u_2 \xi ... u_i)} u_1\xi u_2\xi\dots u_i\Delta u_{i+1}\dots u_n,
$$
where $\Delta=\sum\limits_{i=1}^r\delta_ix_i-x_i\delta_i$.

We first prove that homologies are sitting in one place in the complex $(\widehat{\zeta},d_{\widehat\zeta})$ and then reduce the considerations for $(\tilde\zeta,d)$ to this. After that argument of Lemma~\ref{unbar} shows that for the subcomplex
$( \zeta,d) \subset (\tilde\zeta,d)$ the homology is also sitting in one place, if it is the case for $(\tilde\zeta,d)$. Since $\zeta$ is quasi-isomorphis to the $\chH$ complex $C^{(\bullet)}(A)$ for smooth algebras, we get that $C^{(\bullet)}(A)$ is also pure.

\begin{theorem}\label{D} The $m$-th slice of the complex $(\widehat{\zeta},d_{\widehat\zeta})$ over $A=\K\l x_1,...,x_r\r, r\geq 2,$
$$
\widehat{\zeta}_m=\{u\in {\mathbb K}\langle \xi, x_i,\delta_i\rangle: w(u)={\rm deg}_{\xi,\delta}u=m\}
$$
has non-trivial homology only in the last place with respect to cohomological grading by $\xi$-degree.

\end{theorem}

\begin{proof} Induction by $m$. For arbitrary $m$ we will first need to consider the
case of cohomological degree one, that is
the case of one $\xi$.

\begin{lemma}\label{D1xi}

Consider the place in $(\widehat{\zeta}_m, d_{\widehat\zeta})$,
where ${\rm deg}_\xi u=1$, $u\in \widehat{\zeta}_m$ (one but last place in the complexes $\widehat{\zeta}_m$). Then the homology in this place is trivial.
\end{lemma}

\begin{proof} Let $d_{\widehat\zeta}(u)=0$ for $u\in \widehat{\zeta}$ with ${\rm deg}_\xi u=1$. We show that $u\in{\rm Im}\,d_{\widehat\zeta}$. Since ${\rm deg}_\xi u=1$, $u$ has the shape
$$
u=\sum a_i\xi b_i,\ \ a_i,b_i\in {\mathbb K}\langle x_1\dots,x_r,\delta_1\dots\delta_r\rangle.
$$
Then
$$
d_{\widehat\zeta}u=\sum (-1)^{g(a_j)} a_j\Delta b_j=0.
$$

Consider the ideal $I$ in ${\mathbb K}\langle x_1\dots,x_r,\delta_1\dots\delta_r\rangle$ generated by $\Delta$: $I={\rm Id}(\Delta)$.

We will use notions of Gr\"obner bases theory  and the following  lemma (version of the Diamond  lemma) to describe when this above equality might happen. We remind them here.

\begin{definition}
Monomials $ u,v \in {\mathbb K}\langle Y \rangle$ form an ambiguity $(u,v),$ if for some $w\in {\mathbb K}\langle Y\rangle$, $uw=wv$.

\end{definition}

Suppose in ${\mathbb K}\langle Y \rangle$ we have fixed some admissible well-ordering (that is, ordering compatible with multiplication, which satisfies d.c.c.), for example, (left-to-right) degree-lexicographical ordering: we fix an order on variables, say $y_1 <...<y_n$, and compare monomials on $Y$ lexicographically (from left to right). Polynomials are compared by their highest terms.

\begin{definition}
 Let $u,v$ be  two monomials $u,v$, which are highest terms of the elements $U,V$ from the ideal $I\in {\mathbb K}\langle Y \rangle: U=u+\tilde u, V=v+\tilde v$, where $\tilde u, \tilde v \in {\mathbb K}\langle Y \rangle$, smaller than $u,v\in \l Y \r $ respectively: $\tilde u < u, \tilde v < v$. Then the resolution of the ambiguity $(u,v)$ formed by monomials $u,v$ is a polynomial $Uw-wV=\tilde u w-w\tilde v$, which is reducible to zero modulo  generators of an ideal.
\end{definition}

\begin{definition} A reduction on $\K \l Y \r$ modulo generators of an ideal $f_i=\bar f_i+\tilde  f_i$, where $\bar f_i$ is a highest term of $f_i$, is a collection of linear maps defined on monomials as follows:
$r_{u \bar f_i v}(w)= u \tilde f_i v$, if $w=u \bar f_i v$, and $w$ otherwise.
\end{definition}

The polynomial is called reducible to zero if there exists a sequence of reductions modulo generators of an ideal, which results in zero.

\begin{lemma}\label{Diam} (Version of Diamond Lemma~\cite{Mora}) Let
$A={\mathbb K}\langle y_1\dots,y_n\rangle/{\rm Id}(r_1,\dots,r_m)$. Let $M$ be the syzygy module of the relations $r_1,\dots,r_m$, that is $M$ is the submodule of the free ${\mathbb K}\langle y_1\dots,y_n\rangle$-bimodule generated by the symbols $\widehat{r_1},\dots \widehat{r_m}$ consisting of $\sum f_i\widehat{r_{s_i}}g_i$ such that $\sum f_ir_{s_i}g_i=0$.

Then $M$ is generated by trivial syzygies $\widehat{r_i}ur_j-r_iu\widehat{r_j}$ and the syzygies obtained by resolutions of ambiguities between highest terms of relations (with respect to some ordering).

\end{lemma}

Let us fix the ordering $\delta_1>\delta_2>\dots>x_1>x_2>\dots$. Then the leading term of the polynomial $\Delta$ is $\delta_1x_1$. It does not produce any ambiguities. Hence by Lemma~\ref{Diam} (version of Diamond Lemma), the corresponding syzygy module $M$ is generated by trivial syzygies, and therefore

$$
(*)\quad\quad  \sum  a_j\widehat\Delta b_j=\sum u_k(\widehat{\Delta}v_k\Delta-\Delta v_k\widehat{\Delta})w_k
$$

After we know this we can construct an element

$$
g= \sum {\gamma_k} u_k\xi v_k\xi w_k
$$

where $\gamma_k\in {\mathbb C}$ are chosen in such a way that in the following sum all summands have positive signs

$$
d_{\widehat\zeta}(g)=\sum  (u_k\xi v_k\Delta w_k-u_k \Delta v_k\xi w_k)
$$

We can see then that the
 latter expression is  just the same as the above formula $(*)$ with $\widehat\Delta$ substituted by  $\xi$, hence

$$
\sum  (u_k\xi v_k\Delta w_k-u_k \Delta v_k\xi w_k)=\sum  a_j\xi b_j=u.
$$

And we finally have

$$d_{\widehat\zeta}(g)=\sum  a_j\xi b_j=u.$$

\end{proof}

To continue the proof of Theorem~\ref{D}, we need a basis of induction. So we prove that in the complex $(\widehat{\zeta}_2,d_{\widehat\zeta})$
$$
0\to \dots\xi\dots\xi\dots \to \dots\xi\dots\delta\dots\to \dots\delta\dots\delta\dots\to 0
$$
the homology is sitting in the last place.

Since we already have Lemma~\ref{D1xi}, which deals with the case of one $\xi$ it is equivalent to proving exactness only in one
term, where $\xi$-degree is equal to two. That is, we need to show that if ${\rm deg}_\xi u=2$, ${\rm deg}_{\delta_i}u=0$, and $d_{\widehat\zeta}(u)=0$, then $u=0$. Write $u=\xi u_0+v$, where $v$ does not have $\xi$ on the first position. Then we have
$$
0=d_{\widehat\zeta}(u)=\xi d_{\widehat\zeta}(u_0)+\Delta u_0+d_{\widehat\zeta}(v).
$$

The only term starting with $\xi$ is $\xi d_{\widehat\zeta}(u_0)$, so $d_{\widehat\zeta}(u_0)=0$. Since ${\rm deg}_\xi u_0=1$, we are in situation of Lemma~\ref{D1xi} and $u_0=d_{\widehat\zeta}(v_0)$. Since $u_0$ is free from $\delta _i$, we have $u_0=0$. Thus $u=v=\sum x_i u_i$ and $0=d_{\widehat\zeta}(u)=\sum x_id_{\widehat\zeta}(u_i)$ implies $d_{\widehat\zeta}(u_i)=0$. Applying the same argument to $u_i$ repeatedly, we arrive at $u=0$, as required.

Step of induction in the proof of Theorem~\ref{D}. Let ${\rm deg}_{\xi,\delta_i}u=m$ and $u$ is homogeneous with respect to $\xi$ as well as with respect to $x_i,\xi,\delta_i$ and $u$ is not in the last term of the complex: ${\rm deg}_\xi u\geq 1$. We need to show that $u\in {\rm Im}(d_{\widehat\zeta})$ provided $d_{\widehat\zeta}(u)=0$. We present $u=\xi u_0+v$, where $v$ is not starting from $\xi$. Then
$$
d_{\widehat\zeta}(u)=\Delta u_0+\xi d_{\widehat\zeta}(u_0)+d_{\widehat\zeta}(v)=0.
$$

The only term starting with $\xi$ can not cancel with anything, so $d_{\widehat\zeta}(u_0)=0$. Now ${\rm deg}_{\xi,\delta_i}(u_0)=m-1$. By induction hypothesis and Lemma~\ref{D1xi} (if $m=2$), $u_0=d_{\widehat\zeta}(w)$. Consider $d_{\widehat\zeta}(\xi w)=\Delta w+\xi d_{\widehat\zeta}(w)$. Then
$$
u'=u-d_{\widehat\zeta}(\xi w)=\xi u_0+v-\Delta w-\xi d_{\widehat\zeta}(w)=v-\Delta w.
$$

Thus $u'$ equals $u$ modulo ${\rm Im}\,d_{\widehat\zeta}$ and does not have $\xi$ in the first position:
$$
u'=\sum x_j\xi u_j+\sum \delta_j\xi v_j+v,
$$
where $\xi$ is absent from $v$ in the first two positions. Then
$$
0=d_{\widehat\zeta}(u')=\sum x_j\Delta u_j-\sum \delta_j\Delta v_j+\sum x_j\xi d_{\widehat\zeta}(u_j)+\sum \delta_j\xi d_{\widehat\zeta}(v_j)+d_{\widehat\zeta}(v).
$$

Considering terms with $\xi$ in the second position, we deduce $d_{\widehat\zeta}(u_j)=d_{\widehat\zeta}(v_j)=0$ for all $j$. By the induction hypothesis $u_j=d_{\widehat\zeta}(w_j)$ and $v_j=d_{\widehat\zeta}(s_j)$. Now $u''$ equals $u'$ modulo ${\rm Im d_{\widehat\zeta}}$ and $u''$ has no $\xi$ in the first two positions, where
$$
u''=u-d_{\widehat\zeta}(\sum x_j\xi w_j+\sum \delta_j \xi s_j).
$$

After repeating this procedure, at the end we get $u=t\xi^m$ modulo ${\rm Im}\,d_{\widehat\zeta}$, where $\deg_\xi t=0$. Now $0=d_{\widehat\zeta}(t\xi^m)=td_{\widehat\zeta}(\xi^m)$ and therefore $t=0$ since $d_{\widehat\zeta}(\xi^m)=\Delta\xi^{m-1}+\dots+\xi^{m-1}\Delta\neq 0$. Hence $u\in {\rm Im}\,d_{\widehat\zeta}$.

\end{proof}

Now we prove the  theorem for $(\tilde\zeta,d)$.

\begin{theorem}\label{bar} The $m$-th slice of the complex $(\bar{\zeta},d)$,
$$
\bar{\zeta}_m=\{u\in {\mathbb K}\langle \xi, x_i,\delta_i\rangle: w(u)={\rm deg}_{\xi,\delta}u=m\}
$$
for each $m\geq 2$ has non-trivial homology only in the last place with respect to cohomological grading by $\xi$-degree.

\end{theorem}

\begin{proof}
First we need preliminary exactness result for the case of one $\xi$.

\begin{lemma}\label{d1xi}

Consider the place in $(\bar{\zeta}_m,d)$ for any $m\geq 2$, where ${\rm deg}_\xi u=1$, $u\in \bar{\zeta}_m$ (one but last place in the complex). Then the homology in this place is trivial.

\end{lemma}

\begin{proof} Let $u\in \bar{\zeta}$ be such that ${\rm deg}_\xi u=1$, ${\rm deg}_{\xi,\delta_i} u\geq 2$ and $d_{\tilde\zeta}(u)=0$. We have to show that $u\in {\rm Im}(d)$. Write
$$
u=\xi u_0+\sum a_i\xi b_i, \ \ \text{$a_i\neq$const}.
$$
Then
$$
0=d(u)=\sum \delta_i[x_i,u_0]+\sum(-1)^{\sigma}a_i\Delta b_i.
$$

Thus the following equality holds in $B= {\mathbb K}\langle x_1\dots,x_r,\delta_1\dots\delta_r\rangle/{\rm Id}(\Delta)$:

\begin{equation}\label{star}
0=d(u)=\sum \delta_i[x_i,u_0].
\end{equation}

\begin{lemma}\label{cen2} The equality $\sum \delta_i[x_i,u]=0$ in $B$ implies $[x_i,u]=0$ in $B$ for any $i$.
\end{lemma}

\begin{proof} Let us consider ordering $x_1>x_2>...>\delta_1>\delta_2>...$, then $\Delta$ forms a Gr\"obner basis. Take a normal form $N([x_i,u])
\in {\mathbb K}\langle x_1\dots,x_r,\delta_1\dots\delta_r\rangle=\K\l XD \r$ with respect to the Gr\"obner basis of the ideal ${\rm Id}(\Delta)$. In other words, we present the element $[x_i,u] \in  {\mathbb K}\langle x_1\dots,x_r,\delta_1\dots\delta_r\rangle$ as a sum of monomials which does not contain $x_1 \delta_1$ as a submonomial. Then element $N(\sum \p_i [x_i,u])= \sum \p_i N([x_i,u])=0$ in $\K\l XD \r$, hence
 $N[x_i,u]=0$ in $\K\l XD \r$, which means
 $[x_i,u]=0$ in $B$.
\end{proof}

\begin{lemma}\label{cen}(Centralizer) If in $B=
{\mathbb K}\langle x_1\dots,x_r,\delta_1\dots\delta_r\rangle/{\rm Id}(\Delta)$, $r\geq 2$, $[u,x_i]=0$ for all $i$, then $u\in{\mathbb K}$.
\end{lemma}

\begin{proof} Fix the ordering $\delta_1>\delta_2>\dots>x_1>x_2>\dots$ The highest term of $\Delta$ is $\delta_1x_1$. Then the set $N$ of corresponding normal words (those which do not contain $\delta_1x_1$) is closed under multiplication by $x_2$ on either side: $x_2N\subset N$ and $Nx_2\subset N$.

Let $u\in A$ and $[u,x_i]=0$ for all $i$. As every element of $A$, $u$ can be written as a linear combination of normal words: $u=\sum c_j w_j$, where $w_j\in N$ are pairwise distinct and $c_j$ are non-zero constants. Then $0=[u,x_2]=\sum c_j(w_jx_2-x_2w_j)$. Since $w_jx_2,x_2w_j\in N$, the last equality holds if and only if it holds in the free algebra. Hence $\sum c_j w_j$ commutes with $x_2$ in the free algebra and therefore $u\in {\mathbb K}[x_2]$. The same holds for any other $x_j$, $j\neq 1$ (they enter the game symmetrically) and therefore $u$ is in the intersection of ${\mathbb K}[x_j]$ as subalgebras of $A$. Since this intersection is ${\mathbb K}$, $u\in{\mathbb K}$. Note, that it is essential here that we have at least two $x$th: $r\geq 2$.
\end{proof}

From (\ref{star}), $[x_j,u_0]=0$ in $B$ for all $i$, according to Lemma~\ref{cen2}. By the centralizer lemma $u_0$ is a constant in $B$. Since $m\geq 2$, $u_0=0$ in $B$. Hence
$$
u_0=\sum s_i\Delta t_i
$$
in the free algebra. Thus
$$
u=\sum \xi s_i\Delta t_i+\sum a_i\xi b_i.
$$

Now we substitute $u$ with $u'=u({\rm mod} \, {\rm Im (d)})$, where
$$
u'=u-d(\sum (-1)^{{\rm deg}_{\delta}s_i}\xi s_i\xi t_i).
$$
After cancelations, we get
$$
u'=\sum a_i\xi b_i-\sum_{i,j} (-1)^{\sigma} \delta _j[x_j,s_i\xi t_i]
$$
and therefore $u'$ has no terms starting with $\xi$. Thus
$$
u'=\sum \delta_iu_i
$$
and we fall into the situation of the differential $d_{\widehat\zeta}$ on the complex $\hat \zeta$:
$$
d(u')=\sum \delta_id_{\widehat\zeta}(u_i)\iff d_{\widehat\zeta}(u_i)=0.
$$
By Theorem~\ref{D}, $u_i=d_{\widehat\zeta}(w_i)$ and
$$
u'=\sum \delta_id_{\widehat\zeta}(w_i)=d(-\sum\delta_iw_i),
$$
which yields that $u'$ and therefore $u$ belongs to ${\rm Im}\,d$.
\end{proof}

Now let $\deg_{\xi}u \geq 2.$ Suppose $du=0.$ We will show that $u \in {\rm Im}\, d.$ As before present it as $u= \xi u_0+ v,$ where $v$ does not start with $\xi$. Then
$0=du=\xi d_{\widehat\zeta} u_0+v'$, where $v'$ does not start with $\xi$, hence $d_{\widehat\zeta}u_0=0$. By Theorem~\ref{D} $u_0=d_{\widehat\zeta}s $ for some $s$. Thus take $u'=u-d(\xi s)=u-\xi d_{\widehat\zeta}s-...=\xi d_{\widehat\zeta}s+v-\xi d_{\widehat\zeta}s-...$, and we have a presentation of $u$ modulo the ideal ${\rm Im}\, d$ as an element with no $\xi$ at the first position: $u=\sum \d_i u_i$. Thus, $du=d_{\widehat\zeta}u$ and we can use Theorem~\ref{D} to ensure that $u \in {\rm Im} d$. Indeed, since $0=du=d_{\widehat\zeta}u$ and
$d_{\widehat\zeta}u=-\sum \d_id_{\widehat\zeta} u_i, \quad d_{\widehat\zeta}u_i=0$ for all $i$. Since $\deg_{\xi} u_i \geq 1$, by Theorem~\ref{D} we have $u_i=d_{\widehat\zeta}(w_i)$ for some $w_i$. Thus $u=\sum \d_i u_i
=-\sum \d_i d_{\widehat\zeta}w_i=d(\sum \d_iw_i)$. The latter equality $d(\sum \d_iw_i)=-\sum \d_i d_{\widehat\zeta}w_i$ holds because $\d_iw_i$ not starting with $\xi$. So $u\in {\rm Im} d$, and
 this completes the proof of Theorem~7.8.

\end{proof}

\begin{lemma}\label{unbar}
If the complex $(\tilde\zeta,d)$
is homologically pure, and homology is sitting in the last place w.r.t cohomological grading by $\xi$-degree, the same is true for the complex $(\zeta, d)$.
\end{lemma}

\begin{proof} The statement about $\zeta$ follows from the fact the  cyclization of the complex commutes with the differential in our case. This in turn deduced from the fact that the differential, given  precisely at the beginning of this section obviously commute with $\Z_N$ action.
\end{proof}

This lemma together with Theorem~\ref{bar} completes the proof of Theorem~7.1.

\end{proof}

\subsection{Homological purity of the higher Hochschild complex: quiver path algebra case}\label{q}

We consider here the case when $A$ is the path algebra of the Kronecker quiver, in stead of the case of free algebra. It allows to construct the notions of noncommutative projective geometry, as explained in \cite{KR}. Then we analogously proceed with the arbitrary quiver with at least two vertices.

The Gr\"obner bases theory, which in particular relates the generators of the syzygy module with ambiguities on the relations, as in lemma~\ref{Diam} works not only for ideals in free associative algebras, but in bigger generality of algebras with multiplicative basis \cite{EdG}. This includes path algebras of quivers. Namely, the algebra $A$ should have a linear basis $\Bc$, which is multiplicative in a sense that for any $m_1, m_2 \in \Bc, m_1\cdot_A m_2 \in \Bc $ or $m_1\cdot_A m_2=0$. Moreover, most essential (and not always present) condition is that there should exist a well-ordering on $\Bc$ (ordering satisfying d.c.c.), which is compartible with multiplication, that is for all $m_1,m_2,c \in \Bc, m_1 \leq m_2 $ implies $m_1 \cdot_A c \leq m_2 \cdot_A c$ and  $c \cdot_A m_1 \leq c \cdot_A m_2.$ This ordering is extended to the whole algebra from the basis $\Bc$ by saying that the element with the higher 'tip' (the highest element of $\Bc$, appearing with nonzero coefficient in the linear decomposition of an element from the algebra) is higher.

Path algebra of a quiver $\Qc$ is generated by  $\Qc_0$ - the set of vertices of $\Qc$ and $\Qc_1$ - the set of arrows of $\Qc$ subject to obvious relations. It has a multiplicative linear basis $\Bc$ consisting of all pathes along the quiver. The product of two pathes
$p,q \in\Bc, \, p=v_1a_1...a_{n-1}v_n, q=u_1b_1...b_{k-1}u_k$  is $p\cdot_Aq= pq$, if $v_n=u_1$, and zero otherwise. The admissible well-ordering on $\Bc$ can be as in the free associative algebra, for example, deg-lex ordering, when some ordering on generators  $\Qc_0 \cup \Qc_1$ is fixed.

Lemma~\ref{Diam} in the setting of path algebras  sounds as follows.

\begin{lemma}\label{Diam2} (Version of Diamond Lemma~\cite{EdG}) Let
$A=P\Qc/{\rm Id}(r_1,\dots,r_m)$, where $P\Qc$ is the path algebra of the quiver $\Qc$. Let $M$ be the syzygy module of the relations $r_1,\dots,r_m$, that is $M$ is the submodule of the free $\Qc$-bimodule generated by the symbols $\widehat{r_1},\dots \widehat{r_m}$ consisting of $\sum f_i\widehat{r_{s_i}}g_i$ such that $\sum f_ir_{s_i}g_i=0$.
\end{lemma}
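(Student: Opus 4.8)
The plan is to mimic the proof of Lemma~\ref{Diam} (the free-algebra Diamond Lemma), replacing the free monoid of words by the path monoid of the quiver $\Qc$. The structural facts needed are exactly the three properties singled out before the statement: the path algebra $P\Qc$ has a $\K$-linear basis $\Bc$ (all paths in $\Qc$), this basis is multiplicative ($p\cdot_A q\in\Bc$ or $p\cdot_A q=0$), and it carries an admissible well-ordering (deg-lex, after fixing an order on $\Qc_0\cup\Qc_1$). First I would record that under these hypotheses the notion of reduction modulo the leading terms $\bar r_i$ makes sense verbatim: a reduction $r_{p\bar r_i q}$ acts on a path $w$ by $r_{p\bar r_i q}(w)=p\tilde r_i q$ when $w=p\bar r_i q$ as paths (so in particular $p\bar r_i q\neq 0$) and $w\mapsto w$ otherwise, extended $\K$-linearly; an element is reducible to zero if some finite sequence of such reductions kills it.

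Next I would set up the combinatorics of ambiguities in $P\Qc$. An overlap (resp. inclusion) ambiguity of two leading paths $\bar r_i,\bar r_j$ is a path $w$ that is simultaneously of the form $\bar r_i u$ and $v\bar r_j$ (resp. $p\bar r_j q$) with the evident length constraints; because $\Bc$ is multiplicative such a $w$ is automatically a nonzero path, and in each fixed length component there are only finitely many ambiguities. The core step is then the standard one: the syzygy module $M$ — the sub-bimodule of the free $P\Qc$-bimodule on the symbols $\widehat r_1,\dots,\widehat r_m$, where one must remember that an expression $f\widehat r_i g$ is nonzero only when the source/target vertices of $f$, $\widehat r_i$, $g$ match up — is generated by the trivial syzygies $\widehat r_i u r_j - r_i u\widehat r_j$ together with the syzygies coming from resolving each ambiguity between leading terms. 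This is proved by induction on the leading path, with respect to the admissible well-ordering, of a syzygy $\sum_k f_k\widehat r_{s_k}g_k$: after subtracting trivial syzygies one may assume all paths $f_k\bar r_{s_k}g_k$ equal the common leading path $w$; the vanishing of the scalar coefficient of $w$ in $\sum_k f_k r_{s_k}g_k$ forces $w$ to arise through an ambiguity, and subtracting the corresponding ambiguity syzygy strictly lowers the leading path, so the induction terminates by the d.c.c.

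The one genuine difference from the free-algebra argument — which I expect to be a bookkeeping nuisance rather than a real obstacle — is the presence of idempotents and the partiality of multiplication: in $P\Qc$ products of paths may vanish, $M$ sits inside a bimodule that is free only over the "composable" part of $P\Qc\otimes_\K (P\Qc)^{op}$, and all reductions must be carried out inside $\Bc$ rather than in an ambient free monoid. These are handled by working one pair of source–target vertices (equivalently, one connected component) at a time, where $P\Qc$ behaves exactly like a free monoid equipped with a partial multiplication; this is precisely the generality in which \cite{EdG} establishes Gr\"obner basis theory, so for the confluence/lifting step I would either cite \cite{EdG} directly or reproduce the short induction above with the composability constraints inserted. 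Granting this, the conclusion that $M$ is generated by trivial syzygies and by resolutions of ambiguities follows exactly as in Lemma~\ref{Diam}, which is all that the subsequent purity arguments for $A=P\Qc$ require.
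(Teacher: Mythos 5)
The paper does not prove this lemma: it is cited from~\cite{EdG}, exactly as the free-algebra version Lemma~\ref{Diam} is cited from~\cite{Mora}, so there is no paper proof against which to compare your proposal. You also correctly (and silently) repair the statement as printed, which omits its conclusion — the paper's text only defines the syzygy module $M$ and never says what generates it, even though the subsequent application (``Hence by Lemma~\ref{Diam2}... $M$ is generated by trivial syzygies'') clearly relies on the same conclusion as Lemma~\ref{Diam}, namely that $M$ is generated by the trivial syzygies $\widehat r_i u r_j - r_i u \widehat r_j$ together with the ambiguity resolutions.

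Your sketch is the standard Bergman/Mora/Green induction: after subtracting trivial syzygies, bring all summands of a syzygy $\sum_k f_k\widehat r_{s_k}g_k$ to a common leading path $w$, use the vanishing of the coefficient of $w$ in $\sum_k f_k r_{s_k}g_k$ to force an ambiguity of leading terms, subtract the corresponding ambiguity syzygy to strictly decrease the leading path, and terminate by d.c.c.\ of the admissible ordering. The path-algebra modifications you flag — the multiplicative but partial basis of paths, vanishing of non-composable products, working inside the ``composable'' part of $P\Qc\oo(P\Qc)^{\rm op}$, and decomposing by source/target vertex pair — are precisely the bookkeeping that Green's framework of algebras with an ordered multiplicative basis is designed to absorb, and you identify them correctly. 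The only caveat is that what you offer is explicitly a plan, with recourse to \cite{EdG} for the confluence step, rather than a self-contained proof; but that is fully in keeping with the paper, which itself treats the lemma as a black-box citation.
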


Now the proof of Theorem~\ref{main} will work for the case of path algebra of the Kronecker quiver even with some simplifications. comparing to the case of free algebras. For, example, the centraliser lemma~\ref{cen} will get simplified, the requirement that the number of arrows is bigger than two can be dropped in this case.

We give now this version of the proof here.
Let $\Kc_r$ be the Kronecker quiver, namely the quiver with two vertices ($y=x_0$, $z=x_{r+1}$) and $r$ arrows  $x_1,...,x_r$ form $y$ to $z$.  Denote by $P\Kc_r$ the path algebra of this quiver with generators $x_0=y,x_1,...,x_r,x_{r+1}=z$.
Consider the $\chH$ complex $C^{(\bullet)}(P\Kc_r)$, it will have a small quasi-isomorphic (since $P\Kc_r$ is smooth) subcomplex $\zeta(P\Kc_r)$, defined in section~\ref{sH}.
At some point in section~\ref{sH}, when we depicted the basis  of $\zeta(A)$ (pic.1), we started to work with free associative algebra $A$. The new description of the $\xi\p$ basis in the case of $P\Kc_r$ one can obtain by saying that in pic.1 the monomials $u_i$ are monomials on $\p_i, \p'_i, i=0,r+1$ and the linear basis of $P\Kc_r$ $\Bc_{\Kc}=\{yx_i, yx_iz, x_iz \}$ (or from the linear basis of a path algebra $P\Qc$, in case of an arbitrary quiver $\Qc$).
In other words, $u_i$ are monomials from the free product
 $P\Kc_r * \K\l\p_0,...,\p_{r+1},\p'_0,...,\p'_{r+1}\r.$
The differential in $\zeta(P\Kc_r)$ in  terms of $\xi\d$-monomials over $A=P\Kc_r$ will be:
$$
d(u_1\xi u_2\xi\dots u_n)=\sum(-1)^{g(u_1\xi u_2 \xi ... u_i)} u_1\xi u_2\xi\dots u_i\Delta' u_{i+1}\dots u_n
$$
if $u_1\neq\varnothing$ ($u_1$ starting with $\delta_i$), here $\Delta'=\sum\limits_{i=0}^{r+1}\delta_i x_i-x_i\delta_i$,
and
$$
d(\xi u_1\xi u_2\xi\dots u_n)=\xi d(u_1\xi u_2\xi\dots u_n)+\sum_{i=0}^{r+1} [\delta_i x_i u_1\xi u_2\xi\dots u_n-\delta_i u_1\xi u_2\xi\dots u_n x_i]
$$
if the monomial starts with $\xi.$ Here  $u_i \in P\Kc_r * \K\l\p_0,...,\p_{r+1},\p'_0,...,\p'_{r+1}\r.$


\begin{theorem}\label{main2}
The homology of the complex $\zeta(A)=(\zeta, d)$, $\zeta=\oplus \zeta^k_m$, for $A=P\Kc_r$ is sitting in the diagonal $k=m$,
 consequently, the complex $\zeta=\oplus \zeta(l)$, $\zeta(l)=\mathop{\oplus}\limits_{m-k=l} \zeta^k_m$  is pure, that is the homology is sitting only in the last place of this complex
 with respect to cohomological grading by $\xi$-degree. Thus the $\chH$ complex over  $A=P\Kc_r$  is pure as well.
\end{theorem}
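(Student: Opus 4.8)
The plan is to follow the proof of Theorem~\ref{main} essentially verbatim, with three structural substitutions. The free algebra $\K\langle x_i,\delta_i\rangle$ is replaced by the multiplicative-basis algebra in which the $\xi\delta$-monomials over $P\Kc_r$ live, namely the free product $P\Kc_r * \K\langle\delta_0,\dots,\delta_{r+1},\delta'_0,\dots,\delta'_{r+1}\rangle$; the element $\Delta$ is replaced by $\Delta'=\sum_{i=0}^{r+1}(\delta_ix_i-x_i\delta_i)$; and the free-algebra Diamond Lemma~\ref{Diam} is replaced by Lemma~\ref{Diam2}, valid for any algebra with an admissible multiplicative basis, in particular for path algebras and for the free product above. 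As in Theorem~\ref{main} one proves purity first for the auxiliary complex $\widehat\zeta(P\Kc_r)$ carrying only the $\Delta'$-part of the differential, then for $(\tilde\zeta(P\Kc_r),d)$, and finally descends to $(\zeta(P\Kc_r),d)$ by Lemma~\ref{unbar} and to $C^{(\bullet)}(P\Kc_r)$ by smoothness of $P\Kc_r$.

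First I would establish the analogue of Theorem~\ref{D}: the $m$-th weight slice of $(\widehat\zeta(P\Kc_r),d_{\widehat\zeta})$ has homology concentrated in the top $\xi$-degree. The induction on $m$, including the base case, is identical to the free case once the one-$\xi$ slot (the analogue of Lemma~\ref{D1xi}) is settled: if $d_{\widehat\zeta}u=0$ with ${\rm deg}_\xi u=1$ and $u=\sum a_j\xi b_j$, then $\sum(-1)^{g(a_j)}a_j\Delta'b_j=0$ in the ambient algebra, and I would read this as a syzygy of the single relation $\Delta'$. Fixing the ordering $\delta_0>\dots>\delta_{r+1}>x_0>\dots>x_{r+1}$ (with the $\delta'$'s placed lowest), the tip of $\Delta'$ is the largest monomial $\delta_ix_i$ that is a nonzero path, and it has no self-overlaps; by Lemma~\ref{Diam2} the syzygy module of $\Delta'$ is then generated by trivial syzygies $\widehat{\Delta'}v\Delta'-\Delta'v\widehat{\Delta'}$, and substituting $\xi$ for $\widehat{\Delta'}$ with the sign choice of Lemma~\ref{D1xi} produces an explicit preimage $g=\sum\gamma_ku_k\xi v_k\xi w_k$. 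The higher $\xi$-degrees are then peeled off one at a time exactly as in Theorem~\ref{D}.

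Next I would reduce $(\tilde\zeta(P\Kc_r),d)$ to the previous step along the lines of Theorem~\ref{bar}. The one genuinely new input is the centralizer statement (the analogue of Lemmas~\ref{cen2} and~\ref{cen}): writing $B'=P\Kc_r * \K\langle\delta_i,\delta'_i\rangle/{\rm Id}(\Delta')$, one needs that $\sum_i\delta_i[x_i,u_0]=0$ in $B'$ forces $[x_i,u_0]=0$ for every vertex and arrow $x_i$ (a normal-form computation, as in Lemma~\ref{cen2}), and that an element of $B'$ commuting with all vertices and all arrows of $P\Kc_r$ lies in the span of the idempotents, hence vanishes in positive weight. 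Here the path-algebra setting is an advantage: the normal words with respect to the tip of $\Delta'$ are closed under one-sided multiplication by each $x_i$ and by each vertex idempotent, so commutation in $B'$ lifts to commutation in the free product, and since $\Kc_r$ has two distinct vertices this pins $u_0$ down without the hypothesis $r\geq2$ that was required in Lemma~\ref{cen}; in particular the case $r=1$ is included. Granting this, the homotopy-building arguments of Lemma~\ref{d1xi} and Theorem~\ref{bar} carry over verbatim with $\Delta'$ in place of $\Delta$, Lemma~\ref{unbar} transfers purity from $\tilde\zeta$ to $\zeta$, and smoothness of $P\Kc_r$ transfers it to $C^{(\bullet)}(P\Kc_r)$. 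The same scheme then applies to the path algebra $P\Qc$ of any quiver with $|\Qc_0|\geq2$: $P\Qc * \K\langle\delta_a,\delta'_a\rangle$ is again a multiplicative-basis algebra with an admissible ordering, the tip of $\Delta'=\sum_a(\delta_ax_a-x_a\delta_a)$ (sum over vertices and arrows) is overlap-free, and the centralizer of all vertices and arrows in $P\Qc$ is the span of its idempotents.

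The step I expect to be the main obstacle is exactly this centralizer/commutation statement. One must check carefully that the normal-word set of the ambient multiplicative-basis algebra is stable under multiplication by the vertex idempotents as well as by the arrows, so that commutation modulo the ideal ${\rm Id}(\Delta')$ upgrades to commutation in the free product; and one must verify that an element commuting with every vertex and every arrow of a connected quiver is forced into the span of its idempotents. Everything else is a faithful transcription of the free-algebra argument, with several of its steps — notably Lemma~\ref{cen} — in fact simplifying in the path-algebra case.
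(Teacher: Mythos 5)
Your proposal is correct and follows essentially the same path as the paper's proof of Theorem~\ref{main2}: purity is first established for the auxiliary complex $(\widehat\zeta, d_{\widehat\zeta})$ via the Diamond Lemma applied to the single overlap-free relation $\Delta'$, then transferred to $(\tilde\zeta, d)$ via a centralizer lemma, and finally descended to $\zeta$ and $C^{(\bullet)}$ by Lemma~\ref{unbar} and smoothness of $P\Kc_r$. The only presentational difference is that you invoke the multiplicative-basis Diamond Lemma directly on the free product $P\Kc_r * \K\langle\delta_0,\dots,\delta_{r+1},\delta'_0,\dots,\delta'_{r+1}\rangle$, whereas the paper first realizes this algebra as the quotient $P\Rc/{\rm Id}(\delta'_i=\delta_i)$ of the path algebra of an auxiliary quiver $\Rc$ (two vertices joined by $r$ arrows with a rose of $r+2$ loops at each) so that Lemma~\ref{Diam2} applies in its stated path-algebra form; your observation that the two-vertex structure replaces the $r\geq 2$ hypothesis of the free-algebra centralizer lemma also matches the paper.
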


\begin{proof} To prove this, we consider related complex $\widehat{\zeta}$
with the following differential:
$$
d_{\widehat\zeta}(u_1\xi u_2\xi\dots u_n)=\sum(-1)^{g(u_1\xi u_2 \xi ... u_i)} u_1\xi u_2\xi\dots u_i\Delta' u_{i+1}\dots u_n,
$$
where $\Delta'=\sum\limits_{i=0}^{r+1}\delta_ix_i-x_i\delta_i$.

We first prove that homologies are sitting in one place in the complex $(\widehat{\zeta},d_{\widehat\zeta})$ and then reduce the considerations for $(\tilde\zeta,d)$ to this. After that argument of Lemma~\ref{unbar} shows that for the subcomplex
$( \zeta,d) \subset (\tilde\zeta,d)$ the homology is also sitting in one place, if it is the case for $(\tilde\zeta,d)$. Since $\zeta$ is quasi-isomorphis to the $\chH$ complex $C^{(\bullet)}(A)$ for smooth algebras, we get that $C^{(\bullet)}(A)$ is also pure.

\begin{theorem}\label{D2} The $m$-th slice of the complex $(\widehat{\zeta},d_{\widehat\zeta})$ over $A=P\Kc_r$
$$
\widehat{\zeta}_m=\{u\in {\mathbb K}\langle \xi, x_i,\delta_i\rangle: w(u)={\rm deg}_{\xi,\delta}u=m\}
$$
has non-trivial homology only in the last place with respect to cohomological grading by $\xi$-degree.

\end{theorem}

\begin{proof} Induction by $m$. For arbitrary $m$ we will first need to consider the
case of cohomological degree one, that is
the case of one $\xi$.

\begin{lemma}\label{D1xi2}

Consider the place in $(\widehat{\zeta}_m, d_{\widehat\zeta})$,
where ${\rm deg}_\xi u=1$, $u\in \widehat{\zeta}_m$ (one but last place in the complexes $\widehat{\zeta}_m$). Then the homology in this place is trivial.
\end{lemma}

\begin{proof} Let $d_{\widehat\zeta}(u)=0$ for $u\in \widehat{\zeta}$ with ${\rm deg}_\xi u=1$. We show that $u\in{\rm Im}\,d_{\widehat\zeta}$. Since ${\rm deg}_\xi u=1$, $u$ has the shape
$$
u=\sum a_i\xi b_i,\ \ a_i,b_i\in E=P\Kc_r *\K\l \delta_0,...,\delta_{r+1},\p'_0,...,\p'_{r+1}\rangle.
$$
Then
$$
d_{\widehat\zeta}u=\sum (-1)^{g(a_j)} a_j\Delta' b_j=0.
$$

Consider the ideal $I$ in $E=P\Kc_r *   \K\langle \delta_0,...,\delta_{r+1},\p'_0,...,\p'_{r+1}\rangle$ generated by $\Delta'$: $I={\rm Id}(\Delta')$.
To use the Gr\"obner bases theory for  $E$ we will present it as a quotient of the path algebra of another quiver $\Rc$. Namely, $E=P\Kc_r *   \K\langle \delta_0,...,\delta_{r+1},\p'_0,...,\p'_{r+1}\rangle=P\Rc/{\rm Id}(\p_i'=\p_i)$, where $\Rc$ is a quiver with two vertices $x_0=y, x_{r+1}=z$, $r$ arrows between them and a rose in each vertex,  consisting of
arrows $\p_0,...,\p_{r+1}$ in vertex $x_0$, and of arrows $\p'_0,...,\p'_{r+1}$ in  vertex $x_{r+1}$.
If we factor out relations identifying each $\p_i$ with $\p_i'$, we get exactly the free product  $P\Kc_r *   \K\langle \delta_0,...,\delta_{r+1},\p'_0,...,\p'_{r+1}\rangle$. Thus now we consider an ideal $I$ in the path algebra of a quiver $\Rc$, $F=P\Rc$ generated by two groups of relations: $I={\rm Id}(\p'_i=\p_i, \Delta')$, and use the Gr\"obner bases theory in $F$ to show when the above equality may happen.

Let us fix the ordering $\p'_0>\dots>\p'_{r+1}>\delta_0>\dots>\delta_{r+1}>\dots>x_0>x_1>\dots>x_{r+1}$. Then the leading term of the polynomial $\Delta'$ is $\delta_0x_0$. It does not produce any  ambiguities with itself, or with relations $\p'_i=\p_i$. The latter relations does not produce any ambiguities between themselves either. Hence by Lemma~\ref{Diam2} (version of Diamond Lemma), the corresponding syzygy module $M$ is generated by trivial syzygies, and therefore

$$
(*)\quad\quad  \sum  a_j\widehat\Delta b_j=\sum u_k(\widehat{\Delta}v_k\Delta'-\Delta' v_k\widehat{\Delta})w_k
$$

After we know this we can construct an element
$$
g= \sum {\gamma_k} u_k\xi v_k\xi w_k
$$
where $\gamma_k\in {\mathbb C}$ are chosen in such a way that in the following sum all summands have positive signs
$$
d_{\widehat\zeta}(g)=\sum  (u_k\xi v_k\Delta' w_k-u_k \Delta' v_k\xi w_k)
$$

We can see then that the
 latter expression is  just the same as the above formula $(*)$ with $\widehat\Delta$ substituted by  $\xi$, hence
$$
\sum  (u_k\xi v_k\Delta' w_k-u_k \Delta' v_k\xi w_k)=\sum  a_j\xi b_j=u.
$$

And we finally have
$$d_{\widehat\zeta}(g)=\sum  a_j\xi b_j=u.$$
\end{proof}

To continue the proof of Theorem~\ref{D2}, we need a basis of induction. So we prove that in the complex $(\widehat{\zeta}_2,d_{\widehat\zeta})$
$$
0\to \dots\xi\dots\xi\dots \to \dots\xi\dots\delta\dots\to \dots\delta\dots\delta\dots\to 0
$$
the homology is sitting in the last place.

Since we already have Lemma~\ref{D1xi2}, which deals with the case of one $\xi$ it is equivalent to proving exactness only in one
term, where $\xi$-degree is equal to two. That is, we need to show that if ${\rm deg}_\xi u=2$, ${\rm deg}_{\delta_i}u=0$, and $d_{\widehat\zeta}(u)=0$, then $u=0$. Write $u=\xi u_0+v$, where $v$ does not have $\xi$ on the first position. Then we have
$$
0=d_{\widehat\zeta}(u)=\xi d_{\widehat\zeta}(u_0)+\Delta' u_0+d_{\widehat\zeta}(v).
$$

The only term starting with $\xi$ is $\xi d_{\widehat\zeta}(u_0)$, so $d_{\widehat\zeta}(u_0)=0$. Since ${\rm deg}_\xi u_0=1$, we are in situation of Lemma~\ref{D1xi2} and $u_0=d_{\widehat\zeta}(v_0)$. Since $u_0$ is free from $\delta _i$, we have $u_0=0$. Thus $u=v=\sum x_i u_i$ and $0=d_{\widehat\zeta}(u)=\sum x_id_{\widehat\zeta}(u_i)$ implies $d_{\widehat\zeta}(u_i)=0$. Applying the same argument to $u_i$ repeatedly, we arrive at $u=0$, as required.

Step of induction in the proof of Theorem~\ref{D2}. Let ${\rm deg}_{\xi,\delta_i}u=m$ and $u$ is homogeneous with respect to $\xi$ as well as with respect to $x_i,\xi,\delta_i$ and $u$ is not in the last term of the complex: ${\rm deg}_\xi u\geq 1$. We need to show that $u\in {\rm Im}(d_{\widehat\zeta})$ provided $d_{\widehat\zeta}(u)=0$. We present $u=\xi u_0+v$, where $v$ is not starting from $\xi$. Then
$$
d_{\widehat\zeta}(u)=\Delta' u_0+\xi d_{\widehat\zeta}(u_0)+d_{\widehat\zeta}(v)=0.
$$

The only term starting with $\xi$ can not cancel with anything, so $d_{\widehat\zeta}(u_0)=0$. Now ${\rm deg}_{\xi,\delta_i}(u_0)=m-1$. By induction hypothesis and Lemma~\ref{D1xi2} (if $m=2$), $u_0=d_{\widehat\zeta}(w)$. Consider $d_{\widehat\zeta}(\xi w)=\Delta' w+\xi d_{\widehat\zeta}(w)$. Then
$$
u'=u-d_{\widehat\zeta}(\xi w)=\xi u_0+v-\Delta' w-\xi d_{\widehat\zeta}(w)=v-\Delta' w.
$$
Thus $u'$ equals $u$ modulo ${\rm Im}\,d_{\widehat\zeta}$ and does not have $\xi$ in the first position:
$$
u'=\sum x_j\xi u_j+\sum \delta_j\xi v_j+v,
$$
where $\xi$ is absent from $v$ in the first two positions. Then
$$
0=d_{\widehat\zeta}(u')=\sum x_j\Delta' u_j-\sum \delta_j\Delta' v_j+\sum x_j\xi d_{\widehat\zeta}(u_j)+\sum \delta_j\xi d_{\widehat\zeta}(v_j)+d_{\widehat\zeta}(v).
$$

Considering terms with $\xi$ in the second position, we deduce $d_{\widehat\zeta}(u_j)=d_{\widehat\zeta}(v_j)=0$ for all $j$. By the induction hypothesis $u_j=d_{\widehat\zeta}(w_j)$ and $v_j=d_{\widehat\zeta}(s_j)$. Now $u''$ equals $u'$ modulo ${\rm Im}\, d_{\widehat\zeta}$ and $u''$ has no $\xi$ in the first two positions, where
$$
u''=u-d_{\widehat\zeta}(\sum x_j\xi w_j+\sum \delta_j \xi s_j).
$$

After repeating this procedure, at the end we get $u=t\xi^m$ modulo ${\rm Im}\,d_{\widehat\zeta}$, where $\deg_\xi t=0$. Now $0=d_{\widehat\zeta}(t\xi^m)=td_{\widehat\zeta}(\xi^m)$ and therefore $t=0$ since $d_{\widehat\zeta}(\xi^m)=\Delta'\xi^{m-1}+\dots+\xi^{m-1}\Delta'\neq 0$. Hence $u\in {\rm Im}\,d_{\widehat\zeta}$.
\end{proof}

Now we prove the  theorem for $(\tilde\zeta,d)$.

\begin{theorem}\label{bar2} The $m$-th slice of the complex $(\bar{\zeta},d)$,
$$
\bar{\zeta}_m=\{u\in {\mathbb K}\langle \xi, x_i,\delta_i\rangle: w(u)={\rm deg}_{\xi,\delta}u=m\}
$$
for each $m\geq 2$ has non-trivial homology only in the last place with respect to cohomological grading by $\xi$-degree.

\end{theorem}

\begin{proof}
First we need preliminary exactness result for the case of one $\xi$.

\begin{lemma}\label{d1xi2}
Consider the place in $(\tilde{\zeta}_m,d)$ for any $m\geq 2$, where ${\rm deg}_\xi u=1$, $u\in \tilde{\zeta}_m$ (one but last place in the complex). Then the homology in this place is trivial.
\end{lemma}

\begin{proof} Let $u\in \tilde{\zeta}$ be such that ${\rm deg}_\xi u=1$, ${\rm deg}_{\xi,\delta_i} u\geq 2$ and $d_{\tilde\zeta}(u)=0$. We have to show that $u\in {\rm Im}(d)$. Write
$$
u=\xi u_0+\sum a_i\xi b_i, \ \ \text{$a_i\neq$const}.
$$
Then
$$
0=d(u)=\sum \delta_i[x_i,u_0]+\sum(-1)^{\sigma}a_i\Delta' b_i.
$$

Thus the following equality holds in $G=P\Rc/{\rm Id}(\Delta',\p'_i=\p_i)$
\begin{equation}\label{star2}
0=d(u)=\sum \delta_i[x_i,u_0].
\end{equation}

\begin{lemma}\label{cen2N} The equality $\sum \delta_i[x_i,u]=0$ in $G$ implies $[x_i,u]=0$ in $G$ for any $i$.
\end{lemma}

\begin{proof} Let us consider ordering $x_0>x_1>...>\p'_0>\p'_1>...>\delta_0>\delta_1>...$, then $\Delta'$ and relations $\p'_i=\p_i$ form a Gr\"obner basis. Take a normal form $N([x_i,u])
\in P\Rc$ with respect to the Gr\"obner basis of the ideal ${\rm Id}(\Delta', \p'_i=\p_i)$. In other words, we present the element $[x_i,u] \in P\Rc$
 as a sum of monomials which does not contain $x_0 \delta_0$ or $\p'_i$ as a submonomial.
 Note that multiplication by $\p_i$ from the left preserves normality of the word.
 The element $N(\sum \p_i [x_i,u])= \sum \p_i N([x_i,u])=0$ in $F=P\Rc$, hence
 $N[x_i,u]=0$ in $P\Rc$, which means
 $[x_i,u]=0$ in $G=P\Rc/{\rm Id}(\Delta', \p'_i=\p_i) $.
\end{proof}

\begin{lemma}\label{cenN}(Centralizer) If in
$G=P\Rc/{\rm Id}(\Delta', \p'_i=\p_i) $.
 $[u,x_i]=0$ for all $i$, then $u\in{\mathbb K}$.
\end{lemma}

\begin{proof}
Since in the Kronecker quiver we always have two vertices $x_0, x_{r+1}$, and they have a property that centraliser of each vertex in the path algebra is only multipes of this vertex itself, it is enough to notice that $[u,x_0]=0$ implies $u=\alpha x_0$, $[u,x_{r+1}]=0$ implies $u=\beta x_{r+1}$, and since $x_0\neq x_{r+1}$, we have that $u\in \K$.
\end{proof}

From (\ref{star2}), $[x_j,u_0]=0$ in $G$ for all $i$, according to Lemma~\ref{cen2N}. By the centralizer lemma $u_0$ is a constant in $G$. Since $m\geq 2$, $u_0=0$ in $G$. Hence
$$
u_0=\sum s_i\Delta' t_i
$$
in the free path algebra $P\Rc$. Thus
$$
u=\sum \xi s_i\Delta' t_i+\sum a_i\xi b_i.
$$

Now we substitute $u$ with $u'=u({\rm mod} \, {\rm Im (d)})$, where
$$
u'=u-d(\sum (-1)^{{\rm deg}_{\delta}s_i}\xi s_i\xi t_i).
$$
After cancelations, we get
$$
u'=\sum a_i\xi b_i-\sum_{i,j} (-1)^{\sigma} \delta _j[x_j,s_i\xi t_i]
$$
and therefore $u'$ has no terms starting with $\xi$. Thus
$$
u'=\sum \delta_iu_i
$$
and we fall into situation of the differential $d_{\widehat\zeta}$ on the complex $\hat \zeta$:
$$
d(u')=\sum \delta_id_{\widehat\zeta}(u_i)\iff d_{\widehat\zeta}(u_i)=0.
$$
By Theorem~\ref{D}, $u_i=d_{\widehat\zeta}(w_i)$ and
$$
u'=\sum \delta_id_{\widehat\zeta}(w_i)=d(-\sum\delta_iw_i),
$$
which yields that $u'$ and therefore $u$ belongs to ${\rm Im}\,d$.
\end{proof}

Now let $\deg_{\xi}u \geq 2.$ Suppose $du=0.$ We will show that $u \in {\rm Im}\, d.$ As before present it as $u= \xi u_0+ v,$ where $v$ does not start with $\xi$. Then
$0=du=\xi d_{\widehat\zeta} u_0+v'$, where $v'$ does not start with $\xi$, hence $d_{\widehat\zeta}u_0=0$. By Theorem~\ref{D} $u_0=d_{\widehat\zeta}s $ for some $s$. Thus take $u'=u-d(\xi s)=u-\xi d_{\widehat\zeta}s-...=\xi d_{\widehat\zeta}s+v-\xi d_{\widehat\zeta}s-...$, and we have a presentation of $u$ modulo the ideal ${\rm Im}\, d$ as an element with no $\xi$ at the first position: $u=\sum \d_i u_i$. Thus, $du=d_{\widehat\zeta}u$ and we can use Theorem~\ref{D2} to ensure that $u \in {\rm Im} d$. Indeed, since $0=du=d_{\widehat\zeta}u$ and
$d_{\widehat\zeta}u=-\sum \d_id_{\widehat\zeta} u_i, \quad d_{\widehat\zeta}u_i=0$ for all $i$. Since $\deg_{\xi} u_i \geq 1$, by Theorem~\ref{D2} we have $u_i=d_{\widehat\zeta}(w_i)$ for some $w_i$. Thus $u=\sum \d_i u_i
=-\sum \d_i d_{\widehat\zeta}w_i=d(\sum \d_iw_i)$. The latter equality $d(\sum \d_iw_i)=-\sum \d_i d_{\widehat\zeta}w_i$ holds because $\d_iw_i$ not starting with $\xi$. So $u\in {\rm Im} d$, and
 this completes the proof of Theorem~\ref{bar2}.

\end{proof}

The passing from the complex $\tilde\zeta$ to the complex $\zeta$ goes exactly as before.

\begin{lemma}\label{unbar}
If the complex $(\tilde\zeta,d)$
is homologically pure, and homology is sitting in the last place w.r.t cohomological grading by $\xi$-degree, the same is true for the complex $(\zeta, d)$.
\end{lemma}


This lemma together with Theorem~\ref{bar2} completes the proof of Theorem~\ref{main2}

\end{proof}

We considered here in details  the case of the Kronecker quiver, since this case produces the notions of noncommutative projective geometry, as it is explained in \cite{KR}. But the same argument works in the case of the path algebra of an arbitrary quiver as well. One should consider the initial quiver, where additionally in each vertex there is a rose quiver with $n$  petals, where $n$ is a number of generators (vertices and arrows) of the initial path algebra. Petals are labeled by $\p_i^j$, $i=1,n$, $j$ stands for the number of the vertex. Then we consider path algebra of this quiver, factorised by the ideal generated by the relations $\p_i^j=\p_i^k$ for all $j,k$. The resulting algebra is isomorphic to the free product of the $P\Qc * \K\l \p_i^j\r$, where $\Qc$ is an initial quiver, and we apply the Gr\"obner bases theory for this algebra exactly as before. The centralizer lemma for arbitrary quiver  holds as soon as quiver has at least two vertices, since then there exist a pair of vertices, for which the  intersection of their centralizers in the path algebra is $\K$.

The resulting fact is that the theorem~\ref{main2} holds for the path algebra of an arbitrary quiver with at least two vertices.

\begin{theorem}\label{main3}Let $A$ be a path algebra  of an arbitrary quiver $\Qc$  with at least two vertices, $A=P\Qc$.
The homology of the complex $\zeta(A)=(\zeta, d)$, $\zeta=\oplus \zeta^k_m$,  is sitting in the diagonal $k=m$,
 consequently, the complex $\zeta=\oplus \zeta(l)$, $\zeta(l)=\mathop{\oplus}\limits_{m-k=l} \zeta^k_m$  is pure, that is the homology is sitting only in the last place of this complex
 with respect to cohomological grading by $\xi$-degree. Thus the $\chH$ complex over  $A=P\Qc$  is pure as well.
\end{theorem}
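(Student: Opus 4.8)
The strategy is to rerun, essentially verbatim, the argument that established Theorem~\ref{main2} for the Kronecker quiver, the only genuinely new ingredient being an extension of the centralizer lemma to an arbitrary quiver with at least two vertices. Write $A=P\Qc$ and let $x_1,\dots,x_N$ be the generators of $A$ (its vertices and its arrows). Recall from Section~\ref{q} the small quasi-isomorphic subcomplex $\zeta(A)$ and its two truncations $\tilde\zeta$ (no $\Z_N$-invariance) and $\widehat\zeta$ (differential $d_{\widehat\zeta}$ keeping only $\Delta'=\sum_{i=1}^N(\delta_ix_i-x_i\delta_i)$). As after Theorem~\ref{main2}, the $\xi\p$-monomials over $A$ are built from $\xi$ and from monomials $u_i$ in the free product $E=P\Qc*\K\l\p_1,\dots,\p_N\r$; concretely $E$ is the path algebra of the quiver $\Rc$ obtained from $\Qc$ by attaching at each vertex a rose of $N$ petals $\p_i$ and then identifying the petals of different vertices, so that $E$ carries a multiplicative linear basis and the Gr\"obner machinery of \cite{EdG} and Lemma~\ref{Diam2} applies. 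The differentials $d$ and $d_{\widehat\zeta}$ on $\zeta(A)$, $\tilde\zeta$, $\widehat\zeta$ have exactly the shape written in Section~\ref{q}, with $\Delta'$ now ranging over all $N$ generators.

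The first block of the proof is the analogue of Theorem~\ref{D2}: the homology of each weight slice of $(\widehat\zeta,d_{\widehat\zeta})$ sits only in the top $\xi$-degree. The only step that uses Gr\"obner theory is the one-$\xi$ lemma (analogue of Lemma~\ref{D1xi2}): fixing a degree-lexicographic order with all $\p$'s larger than all $\delta$'s larger than all $x$'s, the leading monomial of every summand of $\Delta'$ is $\delta_ix_i$, and these overlap neither one another nor the leading monomials of the petal-identification relations, and the latter have no self-overlaps; hence by Lemma~\ref{Diam2} the syzygy module is generated by trivial syzygies, which is exactly what is needed to build the explicit homotopy $g$. With this lemma in hand, the base case $m=2$ and the induction on the weight $m$ transfer word for word.

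The second block is the passage from $\widehat\zeta$ to $\tilde\zeta$, i.e. the analogue of Theorem~\ref{bar2}, for which one needs two quiver-theoretic facts about $G=E/{\rm Id}(\Delta')$ (the algebra in which the inner monomials of $\xi\p$-monomials effectively live once the differential has been applied). First, the analogue of Lemma~\ref{cen2N}: $\sum_i\delta_i[x_i,u_0]=0$ in $G$ forces $[x_i,u_0]=0$ for each $i$ — this is again a normal-form argument, since left multiplication by $\delta_i$ preserves normality with respect to the chosen Gr\"obner basis. Second, and this is the heart of the matter, the centralizer lemma (analogue of Lemma~\ref{cenN}): if $[u,x_i]=0$ in $G$ for every generator $x_i$ of $A$, then $u\in\K$. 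Commuting with every vertex idempotent forces $u$ to be ``block-diagonal'', supported on cyclic paths; invoking $|\Qc_0|\geq 2$ one chooses two distinct vertices $v,w$; commuting with the arrows then constrains $u$ to lie in the intersection of the ``vertex centralizers'' of $v$ and of $w$ inside $G$, and this intersection is $\K$ because no nonzero path can be simultaneously a cycle at $v$ and a cycle at $w$. This is the precise analogue of the use of $r\geq 2$ in the free case and of the two-vertex structure in the Kronecker case, and it is the step I expect to demand the most care, since the petals $\p_i$ make the relevant vertex subalgebras of $G$ nontrivial and the cancellation must be verified in $G$ rather than in $P\Qc$ itself.

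Finally, Lemma~\ref{unbar} (cyclization commutes with the differential) promotes purity of $(\tilde\zeta,d)$ to purity of $(\zeta(A),d)$; since $P\Qc$ is smooth, $\zeta(A)$ is quasi-isomorphic to $C^{(\bullet)}(P\Qc)$, so the $\chH$ complex over $P\Qc$ is pure, with homology concentrated on the diagonal $k=m$. The hypothesis $|\Qc_0|\geq 2$ enters only through the centralizer lemma — which is also exactly why the one-vertex, one-loop quiver, i.e. the $\Z$-graded free algebra $\K[x]$, must be excluded.
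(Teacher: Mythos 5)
Your plan reproduces the paper's own argument for this theorem: the paper also treats the general quiver by re-running the Kronecker case verbatim, with the same rose-quiver construction (attach a rose of $n$ petals at each vertex, identify petals across vertices so that the result is $P\Qc *\K\langle\delta_1,\dots,\delta_n\rangle$, then invoke Lemma~\ref{Diam2}), the same analogues of Theorem~\ref{D2} and Theorem~\ref{bar2}, and the remark that the only place $|\Qc_0|\geq 2$ is actually used is the centralizer lemma. So in substance you are following exactly the paper's route.

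One caveat, which applies equally to the paper's terse remark and to your phrasing: the justification of the centralizer lemma by saying ``the intersection of the vertex centralizers of $v$ and $w$ is $\K$ because no nonzero path can be simultaneously a cycle at $v$ and at $w$'' does not quite work as stated. The intersection of the centralizers of two vertex idempotents in a path algebra generally contains all paths avoiding both $v$ and $w$ (in particular all cycles at other vertices), so it is not $\K$; and conversely, the disjointness of ``cycles at $v$'' and ``cycles at $w$'' would, read literally, force $u=0$ rather than $u\in\K$. The genuine argument must combine three things: commutation with \emph{all} vertex idempotents forces $u=\sum_v u_v$ with $u_v\in vGv$; a normal-form argument (the real use of the Gr\"obner structure of $G$) shows each $u_v$ is a scalar multiple of $v$; and commutation with the arrows, plus connectivity, forces all those scalars to agree. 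This is not a divergence from the paper — the paper's own remark for Lemma~\ref{cenN} is equally compressed — but it is worth flagging as the step in your plan that, as written, would not survive a line-by-line check.
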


Combinations of theorems \ref{main2} and \ref{main3} gives the result for path algebra $A=P\Qc$  of an arbitrary quiver $\Qc$, except for the quiver with one vertex and one loop.

\subsection{Formality}

The important consequence of the result on the homological purity of the higher cyclic Hochschild complex  is that we derive formality  for these  complexes in $L_\infty$ sense \cite{KP}.
Various aspects of formality have been studied extensively
 (for example \cite{D, BEGM, BM, ts, sh}),  some of them are famously difficult.
 The main point of \cite{KP} is that for the purposes of deformation theory the weaker property of $L_{\infty}$ formality is what is essential. It was shown there that not only two quasi-isomorphic DGLAs give rise to the isomorphic deformation functors, but there is much weaker equivalence relation on DGLAs, that of $L_{\infty}$-equivalence, which actually defines the deformation functor.

 \begin{definition}
The DGLA $(C,d)$ is called {\it formal} if it is quasi-isomorphic to its cohomologies
$H^{\bullet}C.$
\end{definition}

\begin{definition}
The DGLA $(C,d)$ is $L_\infty-formal$    if it is $L_\infty$ quasi-isomorphic to its cohomologies
$(H^{\bullet}C, 0)$, considered with zero differential, that is there exists an $L_\infty$-morphism, which is a quasi-isomorphism of complexes.
\end{definition}

\begin{theorem} The higher cyclic Hochschild complex $\chH(A)=C^{(\bullet)}(A)=\prod\limits_N C_{cycl}^{(N)} (A)$ is $L_{\infty}$-formal, for free algebra $A=\k\l x_1,...x_n\r$ with at least two generators or path algebra $P\Qc$ of an arbitrary quiver $\Qc$ with at least two vertices.
\end{theorem}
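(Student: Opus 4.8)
The plan is to deduce the $L_\infty$-formality of $\chH(A)=C^{(\bullet)}(A)$ from the homological purity established in Theorems~\ref{main}, \ref{main2} and \ref{main3} by the homotopy transfer argument, in the spirit of \cite{D, KP}. First I would reduce the claim to the small subcomplex. By Theorem~\ref{THM1} the embedding $\zeta^{(\bullet)}\hookrightarrow \mathbf g=C^{(\bullet)}(A)[1]$ is a morphism of differential graded Lie algebras whose image is closed under the generalised necklace bracket, and, since $A$ is smooth (free, or a path algebra), it is a quasi-isomorphism (Section~\ref{sH}). Hence it is enough to prove that the DGLA $(\zeta^{(\bullet)},d,[-,-]_{g.n})$ is $L_\infty$-formal: composing an $L_\infty$-quasi-isomorphism $(H^\bullet\zeta^{(\bullet)},0)\to\zeta^{(\bullet)}$ with the strict inclusion $\zeta^{(\bullet)}\hookrightarrow C^{(\bullet)}(A)$ (an $L_\infty$-morphism with vanishing higher components) then produces an $L_\infty$-quasi-isomorphism $(H^\bullet C^{(\bullet)}(A),0)\to C^{(\bullet)}(A)$, using that $H^\bullet\zeta^{(\bullet)}=H^\bullet C^{(\bullet)}(A)$.

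Next I would apply the homotopy transfer theorem. Over the field $\K$, choose a contraction of the complex $(\zeta^{(\bullet)},d)$ onto its cohomology $H=H^\bullet(\zeta^{(\bullet)})$: maps $\iota\colon H\to\zeta^{(\bullet)}$, $\pi\colon \zeta^{(\bullet)}\to H$ and a homotopy $h$ with $\pi\iota=\mathrm{id}$, $\mathrm{id}-\iota\pi=dh+hd$ and the usual side conditions $h^2=h\iota=\pi h=0$. Transfer then equips $H$ with an $L_\infty$-structure $\{l_n\}_{n\ge1}$ with $l_1=0$ and $l_2$ the graded Lie bracket induced by $[-,-]_{g.n}$, together with an $L_\infty$-quasi-isomorphism $(H,\{l_n\})\to(\zeta^{(\bullet)},d,[-,-]_{g.n})$ whose linear term is $\iota$. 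Each $l_n$ is homogeneous of degree $2-n$ with respect to the cohomological grading, and the transfer formulas are finite sums over $n$-leaf trees, so there is no convergence problem even though $\zeta^{(\bullet)}$ is infinite-dimensional.

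The heart of the argument is a degree count. The relevant cohomological grading is, up to sign, the $\xi$-degree grading of Section~\ref{hp}: replacing a $\xi$ by $\Delta=\sum_i(\delta_ix_i-x_i\delta_i)$ lowers the $\xi$-degree by one, so that $d$ is homogeneous of degree one, while a gluing of $\delta$'s with $x$'s creates no new $\xi$'s (they disappear together with the glued $x$'s), so $[-,-]_{g.n}$ has degree zero. By Theorems~\ref{main}, \ref{main2}, \ref{main3} together with Lemma~\ref{unbar}, the cohomology $H$ is concentrated in a single cohomological degree, namely the diagonal $n=N$ ($\xi$-degree zero). Therefore, for every $n\ge3$, the map $l_n$ sends $H^{\otimes n}$, which lives in that one degree, into a degree shifted by $2-n\neq0$, where $H$ vanishes; hence $l_n=0$, and also $l_1=0$. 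Thus the transferred $L_\infty$-algebra is nothing but $(H,l_2,0)$ — the cohomology with zero differential and its induced graded Lie bracket — so the $L_\infty$-quasi-isomorphism produced above exhibits $\zeta^{(\bullet)}$, hence $C^{(\bullet)}(A)$, as $L_\infty$-formal. For $A=\K\langle x_1,\dots,x_n\rangle$ with $n\ge2$ this invokes Theorem~\ref{main}; for $A=P\Qc$ with $|\Qc_0|\ge2$ it invokes Theorems~\ref{main2} and \ref{main3}.

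The only genuine input is the purity statement; once that is in hand the argument above is formal. Accordingly, the real obstacle lies not in this step but in the preceding sections: it is the Gr\"obner-basis analysis behind Theorems~\ref{main2}--\ref{main3} (and in particular the centraliser Lemmas~\ref{cen} and \ref{cenN}, where the hypotheses $n\ge2$ and $|\Qc_0|\ge2$ are essential) that controls the cohomology of the full complex and forces it onto the diagonal. The homotopy-transfer and degree-counting steps, by contrast, are routine.
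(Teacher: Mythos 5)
Your argument is correct and follows the paper's own route: reduce to the small quasi-isomorphic subcomplex $\zeta^{(\bullet)}$, invoke the purity theorems to place the cohomology in a single degree of the outputs-minus-inputs (equivalently $\xi$-degree) grading, and then kill the transferred higher brackets $l_n$, $n\geq 3$, by a degree count. The paper packages the grading-compatibility of the homotopy transfer through a reductive $\C^*$-scaling action that encodes this grading, whereas you make it explicit by choosing a graded contraction, but this is the same mechanism.
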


In other words, the theorem holds for  path algebra $P\Qc$ of an arbitrary quiver$\Qc$, except for the quiver with one vertex and one loop.

\begin{proof} Remind that in the higher cyclic Hochschild complex  $\chH=\mathop{\prod}\limits_N C_{cycl}^{(N)} (A)$  we have the following grading and the subcomplex  $\zeta$ quasi-isomorphic to this complex is situated with respect to this grading in the following way: $\chH=\mathop{\oplus}\limits_{i \in \Z} \chH{(i)} $, where $i$ is a number of outputs minus number of inputs of corresponding operation.
Thus the embedding of $\zeta$ into $\chH$, described in section~\ref{sH} place zero component of $\zeta$ into zero component of $\chH$:
 $\zeta(0) \subseteq \chH(0)$,  where $\zeta$ considered again with the cohomological grading by $\xi$-degree.
Hence our main Theorem~\ref{main} ensures that the homology of $\chH$ is sitting in the zero place of the grading.
Let us consider the group action on $\chH$ induced by scaling, namely, $\C^*$ acts by $\lambda(u)=\lambda^m u$ for $u\in \chH(m)$. This means that the action uniquely defines the grading.

Now consider $(H^{\bullet}\chH, \infty)$, the $L_{\infty}$-structure on the homologies obtained by the homotopy transfer
of Kadeishvili \cite{Kd}, constructive description of which is given in \cite{KTF}, one can find  explanations also in \cite{BV}.
Since we had a reductive group action on $(\chH,0)$ this action can be pulled through to
$(H^{\bullet}\chH, \infty)$ and so will be compatible with the new $L_{\infty}$-structure on
$H^{\bullet}\chH$ again. Thus the grading on $(H^{\bullet}\chH, \infty)$, being defined by this action, is also natural, i.e. only zero component of it will be nontrivial.

Obviously, if there is only one component in the grading of $L_{\infty}$-algebra, then only one multiplication from $L_{\infty}$-structure can be non-zero. Since we shown that homology
$(H^{\bullet}\chH, \infty)$ is sitting in zero component only, and we are using convention where binary multiplication in the infinity structure has degree zero, only multiplication $m_2$ can be present. Thus in the $L_\infty$-structure of $(H^{\bullet}\chH, \infty)$ $m_n=0$ for $ n\geq 3$, and this implies formality.
Indeed, for formality we need to show that $(\chH,d)$ is quasi-isomorphic to $(H^{\bullet}\chH, 0)$. Since for the $L_{\infty}$-structure obtained by the homotopy transfer it is always true that $(H^{\bullet}\chH, \infty)$  is qiso to $(\chH,d)$, it is enough to show that $(H^{\bullet}\chH, 0)$ is qiso to $(H^{\bullet}\chH, \infty)$,
and this is obviously the case when $m_n=0, n\geq 3.$

\end{proof}



\scshape

\noindent   Natalia Iyudu, School of Mathematics,  The University of Edinburgh, JCMB, The King's Buildings, Edinburgh, Scotland EH9 3FD

\noindent E-mail address: \quad { n.iyudu@ed.ac.uk, n.iyudu@ihes.fr }\ \ \


\scshape

\noindent Maxim Kontsevich, Institut des Hautes \'Etudes Scientifiques, 35 route de Chartres, F - 91440 Bures-sur-Yvette

\noindent  E-mail address: \quad { maxim@ihes.fr}\\

\vspace{13mm}

\scshape

\end{document}